\documentclass[10pt]{article}

\usepackage[english]{babel}
\usepackage[utf8]{inputenc}
\usepackage[T1]{fontenc}

\usepackage{amsmath, amsthm, amssymb, mathtools}

\usepackage[shortlabels]{enumitem}

\usepackage{cases}

\newcommand\imp[1]{\textit{#1}}
\newcommand\EC{\textbf{(EC)}\xspace}
\newcommand\NC{\textbf{(NC)}\xspace}

\usepackage[
backend=biber,
firstinits=true,
style=alphabetic,
sorting=nyt,
isbn=false,
url=false,
doi=true,
maxbibnames=99
]{biblatex}
\AtBeginDocument{\def\MR#1{}}

\usepackage{hyperref}

\usepackage{sectsty}

\usepackage{tikz}
\usetikzlibrary{positioning}
\usepackage{caption}

\usepackage[a4paper]{geometry}

\providecommand{\keywords}[1]{\textbf{Keywords.} #1}
\providecommand{\MSC}[1]{\textbf{2010 Mathematics Subject Classification.} #1}

\newtheorem{theorem}{Theorem}[section]
\newtheorem{corollary}[theorem]{Corollary}
\newtheorem{lemma}[theorem]{Lemma}
\newtheorem{proposition}[theorem]{Proposition}
\newtheorem{example}[theorem]{Example}

\theoremstyle{definition}
\newtheorem{definition}[theorem]{Definition}
\newtheorem{remark}[theorem]{Remark}

\renewcommand\epsilon{\varepsilon}
\renewcommand\mapsto{\longmapsto}

\usepackage{color}

\usepackage{xspace}

\newcommand{\R}{\field{R}\xspace}
\newcommand{\C}{\field{C}\xspace}
\newcommand{\N}{\field{N}\xspace}

\newcommand{\field}[1]{\ensuremath{\mathbb{#1}}}

\newcommand{\ens}[1]{ \left\{#1\right\} }

\newcommand\diag{\mathrm{diag} \,}

\newcommand{\TinfNC}{T_{\mathrm{inf}}^{\scriptscriptstyle \NC}}
\newcommand{\TinfEC}{T_{\mathrm{inf}}^{\scriptscriptstyle \EC}}

\newcommand\TCN{T_{\scriptscriptstyle \mathrm{[CN]} }}

\newcommand\pt[1]{\frac{\partial #1}{\partial t}}
\newcommand\px[1]{\frac{\partial #1}{\partial x}}

\newcommand\pxi[1]{\frac{\partial #1}{\partial \xi}}

\newcommand\Tau{\mathcal{T}}

\newcommand\chir{\hat{\chi}}
\newcommand\ssinr{\hat{s}^{\mathrm{in}}}

\newcommand{\rank}{\mathrm{rank} \,}

\newcommand\Id{\mathrm{Id}}

\newcommand{\lin}[1]{\mathcal{L}(#1)}

\def\norm#1{\left\|#1\right\|}
\newcommand\abs[1]{\left|#1\right|}
\newcommand\st{\quad \middle| \quad}

\newcommand{\ps}[3]{ {\left\langle #1 , #2 \right\rangle}_{#3} }
\newcommand\ddt{\frac{d}{dt}}

\newcommand\setCN{\mathcal{B}}

\newcommand\opA[1]{A_{\scriptscriptstyle #1}}
\newcommand\opS[1]{S_{\scriptscriptstyle #1}}

\newcommand\dds{\frac{d}{ds}}

\newcommand\Xm[2]{\ell_{#1}\left[#2\right]}

\newcommand\xmax{x_0}

\newcommand\tr{{\mathsf{T}}}

\newcommand\Qcan{Q^c}

\newcommand\Qadj{\hat{Q}}

\newcommand\Gadj{\hat{G}}

\newcommand\rhoCN{\theta}
\newcommand\rhomax{\bar{\rho}}

\title{Control of 1D first-order linear hyperbolic systems: a brief survey}

\author{
Long Hu\thanks{School of Mathematics, Shandong University, Jinan, Shandong 250100, China.  E-mail: \texttt{hul@sdu.edu.cn}}
\and
Guillaume Olive\thanks{Faculty of Mathematics and Computer Science, Jagiellonian University, ul. {\L}ojasiewicza 6, 30-348 Krak\'{o}w, Poland. E-mail: \texttt{math.golive@gmail.com} or \texttt{guillaume.olive@uj.edu.pl}}
}

\date{}

\begin{document}

\maketitle

\begin{abstract}
At the occasion of the 70th birthday anniversary of Jean-Michel Coron, we review the literature on the control of 1D first-order linear hyperbolic systems, a topic where he has made significant contributions.
\end{abstract}

\keywords{Hyperbolic systems; Minimal control time; $LU$ decomposition; Compactness-uniqueness method; Backstepping method; Equivalent systems; Stabilization}

\vspace{0.2cm}
\MSC{35L40; 93B05; 93D15; 45D05}

\tableofcontents

\section{Introduction}

Many physical models are described by one-dimensional (1D) hyperbolic systems, such as navigable rivers, irrigation channels, heat exchangers, tubular plug flow chemical reactors, gas pipelines, chromatography and road traffic.
The control problem for such systems plays an important role both from theoretical and practical point of view.
Roughly speaking, the control problem for evolution equations discusses the possibility of driving within a finite time the state of a system to a prescribed final target state by means of inputs, called \imp{controls}, that can be applied either at the boundary or inside the domain where the system evolves.
These problems are commonly studied in science and engineering, especially for hyperbolic systems. For example, controlling the Saint-Venant equations is crucial for the regulation of navigable rivers in engineering applications.
A concrete example is the Meuse river in Wallonia, southern Belgium (see \cite[Chapter 8]{BC16}), where boundary controls are implemented to ensure sufficient water flow for navigation while simultaneously preventing flooding caused by high river discharge.
Other concrete applications include for instance the control of supply chains, gas pipelines and traffic flows, as discussed in \cite{BCGHP14} and \cite{YK22}.
Most of physical models are \imp{nonlinear systems} and there are already works on the control of 1D first-order nonlinear hyperbolic systems.
However, in this survey, we only focus on the control problems of \imp{linear} (and time-independent) systems.
One of the main reasons is that some of the fundamental control problems are still far from being well-understood, even in this simplified context.
We shall mention few references for nonlinear systems at the end of the paper.

\paragraph{General framework.}

The system we are interested in is the following system of 1D hyperbolic Partial Differential Equations (PDEs):
\begin{equation}\label{syst}
\begin{dcases}
\pt{y}(t,x)+\Lambda(x) \px{y}(t,x)=M(x) y(t,x), \\
y_-(t,1)=u(t), \quad y_+(t,0)=Qy_-(t,0), \\
y(0,x)=y^0(x),
\end{dcases}
\quad (t,x) \in R_T.
\end{equation}
The notations are as follows.
The domain is the rectangle $R_T=(0,T) \times (0,1)$, where $T>0$.
$t$ is the time variable and $x$ is the space variable.
Since we are interested in \imp{systems}, the state $y:R_T \to \R^n$ is thus a vector-valued function ($n \geq 2$).

The $n \times n$ matrix $\Lambda$ will always be assumed diagonal $\Lambda =\diag(\lambda_1,\ldots,\lambda_n)$, with $m \geq 1$ negative and $p \geq 1$ positive entries ($m+p=n$), also called \imp{speeds}:
$$
\lambda_1(x), \ldots, \lambda_m(x)<0<\lambda_{m+1}(x),\ldots,\lambda_{m+p}(x),
$$
for every $x$.
We shall sometimes use the block decomposition $\Lambda=\diag(\Lambda_-,\Lambda_+)$, where
\begin{equation}\label{decomp Lambda}
\Lambda_-=\diag(\lambda_1,\ldots,\lambda_m), \quad \Lambda_+=\diag(\lambda_{m+1},\ldots,\lambda_{m+p}).
\end{equation}
The speeds will be assumed to be Lipschitz continuous on $\R$, so that the \imp{characteristics} are globally defined: for every $1 \leq k \leq n$ and $(t,x) \in \R^2$, there is a unique $C^1$ function $s \mapsto \chi_k(s;t,x)$ solution to the Ordinary Differential Equation (ODE)
$$
\begin{dcases}
\dds \chi_k(s;t,x)=\lambda_k(\chi_k(s;t,x)), \quad \forall s \in \R, \\
\chi_k(t;t,x)=x.
\end{dcases}
$$
We point out that two speeds may have common values, or even cross, and the system is therefore not necessarily strictly hyperbolic.
Additional assumptions may be added in some sections below though.

The $n \times n$ matrix $M$ couples the equations of the system inside the domain and will be called the \imp{internal coupling matrix}.
We will assume that $M \in L^{\infty}(0,1)^{n \times n}$.

The structure of $\Lambda$ induces a natural splitting of the state $y$ into components corresponding to negative and positive speeds:
$$
y=
\begin{pmatrix}
y_- \\
y_+ 
\end{pmatrix},
\quad
\begin{array}{l}
y_- \in \R^m,
\\
y_+ \in \R^p.
\end{array}
$$
This explains why, for system \eqref{syst} to be well-posed, we need to consider boundary conditions at $x=1$ for $y_-$ and at $x=0$ for $y_+$.
The particular type of (local) boundary conditions considered in \eqref{syst} is also motivated by applications (\cite{BC16,YK22}).
The function $u:(0,T) \to \R^m$ is called the \imp{control}, it is a function that will be at our disposal.
Since it acts on the boundary of the domain, we speak of \imp{boundary controllability}.
We will also speak of ``the controls'' when referring to its components $u_1,\ldots,u_m$.
We have $m$ controls for $n$ equations so that we have less controls than equation.
Such systems are called \imp{underactuated}.
Since the control acts only on one side of the boundary, we speak of \imp{one-sided controllability} (\cite{Li10}).
The system is coupled on the other part of the boundary by the matrix $Q \in \R^{p \times m}$.
This matrix will be called the \imp{boundary coupling matrix}.

We shall also refer to systems of the form \eqref{syst} as \imp{$m+p$ systems} and denote them by $(\Lambda,Q,M)$.
Systems with $M=0$ are called (linear) \imp{systems of conservation laws} (\cite{Bre00,BC16}), whereas systems with $M \neq 0$ are sometimes referred to as (linear) \imp{systems of balance laws} (\cite{BCGHP14,BC16}).

System \eqref{syst} is well-posed in several functional settings.
In this paper, we will work in $L^2$ for the state and the controls.
We can prove that, for every $y^0 \in L^2(0,1)^n$ and $u \in L^2(0,T)^m$, there exists a unique solution
$$
y \in C^0([0,T];L^2(0,1)^n) \cap C^0([0,1];L^2(0,T)^n).
$$
By solution we mean ``solution along the characteristics'' (sometimes also called ``broad solution'', see \cite[Chapter 3]{Bre00}).
We refer for instance to \cite[Appendix A]{CHOS21} for a proof of the above well-posedness result in such a setting (see also \cite[Appendix A]{BC16} when $u=0$ and \cite[Lemma 3.2]{CN19} in the $L^\infty$ setting).

\paragraph{Control problems.}

Since $y(t,\cdot) \in L^2(0,1)^n$ for all $t$, we can consider control problems in the state space $H=L^2(0,1)^n$:

\begin{definition}
Let $T>0$ be fixed.
We say that system \eqref{syst} is:
\begin{itemize}
\item
\imp{exactly controllable} (below, \EC) in time $T$ if, for every $y^1, y^0 \in L^2(0,1)^n$, there exists $u \in L^2(0,T)^m$ such that the corresponding solution $y$ to system \eqref{syst} satisfies the final condition $y(T,\cdot)=y^1$.

\item
\imp{null controllable} (below, \NC) in time $T$ if
the previous property holds at least for $y^1=0$.
\end{itemize}
\end{definition}

Obviously, \EC always implies \NC.
In general, the converse implication does not hold.
Nevertheless, for our system \eqref{syst}, we have equivalence if $\rank Q=p$ (see e.g. \cite[Remark 1.3]{HO21-JMPA}).
We can also prove that, if $\rank Q<p$, then the system is never \EC (see Corollary \ref{cor main thm EC} below).

It is not difficult to check that controllability in time $T_1$ implies controllability in any time $T_2 \geq T_1$.
It is therefore natural and fundamental to try to find the smallest possible control time, the so-called \imp{minimal control time}.

\begin{definition}
We denote by $\TinfEC(\Lambda,Q,M) \in [0,+\infty]$ the minimal time for the \EC of system \eqref{syst}:
$$
\TinfEC(\Lambda,Q,M)=
\inf\ens{T>0 \st \text{System \eqref{syst} is \EC in time $T$}}.
$$
Similarly, we denote by $\TinfNC(\Lambda,Q,M) \in [0,+\infty]$ the minimal time for the \NC of system \eqref{syst}.
\end{definition}

We say ``minimal'' control time according to the current literature, despite this time is not always a minimal element of the set (specifically for our system \eqref{syst}, this follows from the results of \cite{CN19}, or a combination of the results of \cite{HO21-JMPA} with \cite{Rus78}).
We keep this naming here, but we use the notation with the ``inf'' to avoid eventual confusions.

It is not difficult to see that, whether we consider \EC or \NC, the minimal control time for a single equation (the transport equation) with speed $\lambda_k$ is the solution $T_k$ to $\chi_k(T_k;0,1)=0$ if $\lambda_k<0$ and to $\chi_k(T_k;0,0)=1$ if $\lambda_k>0$.
In any case, it is explicitly given by
\begin{equation}\label{def Tk}
T_k=\int_0^1 \frac{1}{\abs{\lambda_k(\xi)}} \, d\xi.
\end{equation}
We refer for instance to \cite[Section 3]{Rus78}, \cite[Section 3.1]{CVKB13} or \cite[Chapter 1]{Oli25-notes}.
These quantities will play a key role below.
The related problem is however much more challenging for the strongly coupled system \eqref{syst}.

\paragraph{Motivation of the survey.}

The controllability of 1D first-order hyperbolic systems with boundary controls has been widely studied in the literature.
It dates back at least to the article \cite{Rus67} and the celebrated survey \cite{Rus78} (see also \cite{Cir69}).
After these seminal works, it seems that the community turned its attention to quasilinear systems (i.e. $\Lambda$ depending on $y$) in the $C^1$ framework of so-called semi-global solutions (see e.g. \cite{LR03,Li10,Hu15}).
However, lately there has been a resurgence on finding the minimal control time for linear systems.
This was initiated in \cite{CN19} and followed by a series of works \cite{HO21-JMPA,CN21,HO21-COCV,CN21-pre,HO22-JDE,HO25-SICON,HO25-JDE}.
The goal of this survey is to present some of these important recent advances and to give an overview of the type of control techniques that have been developed to address this problem.

\section{The result of Russell and the method of Li and Rao}
\label{sect LR}

We start this survey by presenting the first historical result on the controllability of system \eqref{syst}, obtained in \cite{Rus78}, and the constructive method of Li and Rao (\cite{LR03}), used as the modern approach to prove this result (\cite{Li10}).

All along this part, we will assume in addition that the speeds \imp{do not cross}.
Since they are continuous functions, we can then always sort them as follows:
\begin{equation}\label{hyp speeds are ordered}
\lambda_1 \leq \ldots \leq \lambda_m<0<\lambda_{m+1} \leq \ldots \leq \lambda_{m+p}.
\end{equation}
Observe that this implies the following order (see \eqref{def Tk}):
\begin{equation}\label{order times}
T_1 \leq \cdots \leq T_m \quad \text{ and } \quad T_{m+1} \geq \cdots \geq T_{m+p}.
\end{equation}

The first result on the controllability of system \eqref{syst} is the following:

\begin{theorem}[\cite{Rus78,Li10}]\label{thm LR EC}
Assume \eqref{hyp speeds are ordered} and $\rank Q=p$.
Then, system \eqref{syst} is \EC in time $T=T_m+T_{m+1}$.
\end{theorem}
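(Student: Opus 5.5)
We will prove the theorem with the constructive method of Li and Rao. The idea is to build, for given $y^0,y^1\in L^2(0,1)^n$, a solution $y$ of the PDE on $R_T$ with $T=T_m+T_{m+1}$. It must satisfy $y(0,\cdot)=y^0$, $y(T,\cdot)=y^1$ and the boundary condition $y_+(t,0)=Qy_-(t,0)$. The control is then simply read off as $u(t)=y_-(t,1)$, which lies in $L^2(0,T)^m$ by the well-posedness result of the introduction (the solution is in $C^0([0,1];L^2(0,T)^n)$). The key device is to exchange the roles of $t$ and $x$ and solve sideways problems in $x$. Since the speeds never vanish, the system can be rewritten as
$$
\px{y}+\Lambda^{-1}\pt{y}=\Lambda^{-1}My,
$$
which is again a hyperbolic system, now with $x$ as the evolution variable. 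This problem is well-posed on strips $\{x\in(0,1)\}$ once data are prescribed on $x=0$, and, where needed, on the appropriate time boundaries.

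The plan has three steps. First, solve a forward problem on $(0,T_1')\times(0,1)$, where $T_1'$ will be chosen in $[T_{m+1},T]$. It takes initial data $y^0$, an arbitrary control, say $y_-(t,1)=0$, and the boundary condition at $x=0$. Second, solve a backward problem from time $T$ with final data $y^1$. Any solution defined for $t\ge T-T_m$ near $T$ works: take arbitrary data, then run the system backward in time. For this we need boundary conditions at $x=0$ for $y_-$ and at $x=1$ for $y_+$. Backward in time, the condition $y_+(t,0)=Qy_-(t,0)$ can be inverted on the $y_-$ side because $\rank Q=p$: pick a right inverse and write $y_-(t,0)=Q^{+}y_+(t,0)+w(t)$ with $w\in\ker Q$ free. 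This is the only place the rank hypothesis enters. Third, glue the two pieces. Restrict both solutions to the trace $x=0$, producing $y(t,0)$ on $(0,T_{m+1})$ from the forward problem and on $(T-T_m,T)$ from the backward problem. Because $T-T_m=T_{m+1}$, these time intervals are complementary. Then fill in the trace $y(t,0)$ consistently with $y_+=Qy_-$, for instance by taking $y_-(t,0)$ arbitrary in $L^2$ on any leftover portion.

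The final step solves the sideways problem in $x$ from $x=0$ to $x=1$. The data are the Cauchy data $y(\cdot,0)$ on $(0,T)$, together with $y^0$ at $t=0$ for the components whose characteristics enter the region from the bottom, and $y^1$ at $t=T$ for those entering from the top. Seen from the $x$-direction, the $y_+$ components propagate forward in time and the $y_-$ components backward in time. So $y_+$ needs lateral data at $t=0$, which is $y^0_+$, and $y_-$ needs lateral data at $t=T$, which is $y^1_-$. Uniqueness of this sideways solution on the domains of determinacy shows that it coincides with the forward solution near $t=0$ and with the backward solution near $t=T$. Consequently $y(0)=y^0$ and $y(T)=y^1$.

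The main obstacle is the domain-of-dependence bookkeeping: checking that the choice $T=T_m+T_{m+1}$ makes the regions determined by $y^0$ and by $y^1$ cover the whole of the lines $t=0$ and $t=T$. The ordering assumption makes $T_m$ and $T_{m+1}$ the slowest times (see the order of the $T_k$ stated at the beginning of this section). It therefore guarantees that the slowest negative characteristic started at $(T,1)$ backward, and the slowest positive one started at $(0,0)$ forward, reach $x=0$ and $x=1$ exactly within these windows; that is what makes the gluing consistent. A secondary technical point is establishing $L^2$ well-posedness of the sideways problem with $L^\infty$ coupling $M$. This follows by the same fixed-point argument along characteristics used for \eqref{syst}, since the roles of $t$ and $x$ are symmetric once $\Lambda^{-1}$ is bounded.
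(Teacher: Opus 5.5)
Your overall strategy is the paper's Li--Rao construction: a forward problem, a backward problem using a right inverse of $Q$, gluing of the traces at $x=0$, then the sideways problem in $x$ with lateral data $y^0_+$ at $t=0$ and $y^1_-$ at $t=T$. However, your gluing windows are swapped, and with them the construction fails whenever $T_m\neq T_{m+1}$.

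In the sideways problem the components $y_+$ receive their correct values at $t=0$ directly as lateral data. What must be recovered from the trace at $x=0$ is $y_-(0,\cdot)=y^0_-$. The $k$-th characteristic ($k\le m$) through $(0,x)$ meets $x=0$ at time $\int_0^x d\xi/\abs{\lambda_k(\xi)}$, which ranges up to $T_m$. The relevant domain of determinacy is $\{t<T_m,\ 0<x<\chi_m(t;0,1)\}$, bounded by the slowest negative characteristic issued from $(0,1)$. Hence the forward trace must be used on $(0,T_m)$.

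Symmetrically, at $t=T$ it is $y^1_+$ that must be recovered. The positive characteristics through $(T,x)$ leave $x=0$ at times down to $T-T_{m+1}$, the lower bound being attained by the slowest positive characteristic arriving at $(T,1)$. Hence the backward trace must be used on $(T-T_{m+1},T)$.

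This is exactly what the paper does, and with $T=T_m+T_{m+1}$ these two windows are complementary. Your windows $(0,T_{m+1})$ and $(T-T_m,T)$ are the other complementary pair. The characteristics you invoke in your last paragraph are not the ones bounding the determinacy triangles. In particular, a negative characteristic traced backward in time from $(T,1)$ leaves the domain at once.

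Concretely, take $m=p=1$, $M=0$, constant speeds and $Q=q\neq 0$. The sideways solution satisfies $y_1(0,x)=c_1(x/\abs{\lambda_1})$ and $y_2(T,x)=c_2(T-x/\lambda_2)$.
\begin{itemize}
\item If $T_2<T_1$: on $(T_2,T_1)$ your trace is $c_1=q^{-1}y^b_2(\cdot,0)$. It is produced by the arbitrary backward control and is unrelated to $y^0_1$. So in general $y_1(0,x)\neq y^0_1(x)$ for $x\in(\abs{\lambda_1}T_2,1)$.
\item If $T_2>T_1$: on $(T_1,T_2)$ your trace is $c_2=q\,y^f_1(\cdot,0)$. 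It is produced by the forward control, and vanishes for your choice $y^f_1(\cdot,1)=0$. So $y_2(T,x)=0$ instead of $y^1_2(x)$ for $x\in(\lambda_2T_1,1)$.
\end{itemize}

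Swapping the windows repairs the argument: run the forward problem at least up to $T_m$ and the backward problem down to $T-T_{m+1}$. The rest of your plan then goes through, namely the right inverse of $Q$ in the backward boundary condition and the well-posedness and uniqueness of the sideways problem.
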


\begin{proof}
We give the proof in \cite[Section 3.4]{Li10}, which is based on the constructive method of Li and Rao (\cite{LR03}).
The basic idea is to construct a ``solution'' $y$ to
\begin{equation}\label{proof LR:cond for y}
\begin{dcases}
\pt{y}(t,x)+\Lambda(x) \px{y}(t,x)=M(x) y(t,x), \\
y_+(t,0)=Qy_-(t,0), \\
y(0,x)=y^0(x), \quad y(T,x)=0,
\end{dcases}
\quad (t,x) \in R_T.
\end{equation}
Once such a $y$ has been constructed, the control is simply the trace $u(t)=y_-(t,1)$.
The construction of $y$ is done in several steps as follows.
\begin{enumerate}
\item
\textbf{Forward problem.}
We first consider the ``forward in time'' system
$$
\begin{dcases}
\pt{y^f}(t,x)+\Lambda(x) \px{y^f}(t,x) = M(x) y^f(t,x), \\
y^f_-(t,1) = u^f(t), \quad y^f_+(t,0)=Qy^f_-(t,0), \\
y^f(0,x) = y^0(x),
\end{dcases}
\quad t \in (0,T_m),
$$
where $u^f$ is arbitrary.
Then, we set
$$a(t)=y^f(t,0), \quad t \in (0,T_m).$$

\item
\textbf{Backward problem.}
By assumption on $Q$, there exists $S \in \R^{m \times p}$ such that $QS=\Id_p$.
We then consider the ``backward in time'' system
$$
\begin{dcases}
\pt{y^b}(t,x)+\Lambda(x) \px{y^b}(t,x)=M(x) y^b(t,x), \\
y^b_-(t,0)=S y^b_+(t,0), \quad y^b_+(t,1)=u^b(t), \\
y^b(T,x)=y^1(x),
\end{dcases}
\quad t \in (T-T_{m+1},T),
$$
where $u^b$ is arbitrary.
This system is well-posed (for instance, we can use the changes of variables $t \mapsto T-t$ and $x \mapsto 1-x$).
Then, we set
$$b(t)=y^b(t,0), \quad t \in (T-T_{m+1},T).$$

\item
\textbf{Gluing the boundary conditions.}
By assumption on $T$, we have $T_m \leq T-T_{m+1}$.
Therefore, we can define
$$
c(t)=
\begin{dcases}
a(t) & \text{ if } t \in (0,T_m), \\
b(t) & \text{ if } t \in (T-T_{m+1}, T).
\end{dcases}
$$
By construction, it satisfies the boundary condition at $x=0$ for all $t$:
\begin{equation}\label{BC for c}
c_+(t)=Qc_-(t), \quad t \in (0,T).
\end{equation}

\item
\textbf{Changing $t$ and $x$.}
We now change to role of $t$ and $x$ and consider the function $c$ defined in the previous step as an initial condition.
We thus consider the system
$$
\begin{dcases}
\px{y}(t,x)+\Lambda(x)^{-1} \pt{y}(t,x)=\Lambda(x)^{-1} M(x) y(t,x), \\
y_-(T,x)=y^1_-(x), \quad y_+(0,x)=y^0_+(x), \\
y(t,0)=c(t),
\end{dcases}
\quad (t,x) \in R_T.
$$
This ``time'' dependent system is well-posed.
Its solution $y$ is our candidate to be also the solution to \eqref{proof LR:cond for y}.

\item
\textbf{Uniqueness in triangular-like domains.}
The function $y$ constructed in the previous step obviously satisfies the equations in \eqref{proof LR:cond for y} and the boundary condition at $x=0$ (see \eqref{BC for c}).
However, only parts of the initial and final conditions are satisfied.
Nevertheless, using assumption \eqref{hyp speeds are ordered} we can show that $y$ necessarily coincides with $y^f$ in some triangular-like domain:
$$
y=y^f \quad \text{ in } \quad \ens{(t,x) \st t<T_m, \quad 0<x<\chi_m(t;0,1)}.
$$
Since $(0,x)$ belongs to this domain, the initial condition is therefore satisfied: $y(0,x)=y^f(0,x)=y^0(x)$.
The final condition is obtained similarly by noting that $y=y^b$ in $\ens{(t,x) \st t>T-T_{m+1}, \quad 0<x<\chi_{m+1}(t;T,1)}$, which contains $(T,x)$.

\end{enumerate}

We refer to \cite[Section 3.4]{Li10} or \cite[Chapter 4]{Oli25-notes} for more details.
\end{proof}

\begin{remark}\label{rem other app of LR}
The method of Li and Rao can be adapted to deal with many situations (e.g. two-sided controllability, two-sided controllability with less controls, \NC, etc.) and, more importantly, it works for quasilinear systems too.
We refer to the reference book \cite{Li10}.
\end{remark}

\begin{remark}\label{rem Li sharp}
It is written in \cite{Li10} that the control time $T_m+T_{m+1}$ is ``sharp''.
However, let us warn the reader that this does not mean that $T_m+T_{m+1}$ is the minimal control time.
What is meant in \cite{Li10} is that, ``in general'', we cannot do better than $T_m+T_{m+1}$, where ``in general'' means ``for some $Q$ (with $\rank Q=p$) and some $M$''.
However, we will see below that, for most matrices $Q$ and $M$, the control time $T_m+T_{m+1}$ is not the minimal one.
\end{remark}

\section{The minimal control time for systems of conservation laws}
\label{sect cons laws}

In the survey \cite{Rus78}, the author also already observed that the control time $T_m+T_{m+1}$ can be larger than the minimal control time, even in the simple case of \imp{systems of conservation laws}, that is systems with $M=0$:

\begin{equation}\label{syst cons laws}
\begin{dcases}
\pt{y}(t,x)+\Lambda(x) \px{y}(t,x)=0, \\
y_-(t,1)=u(t), \quad y_+(t,0)=Q y_-(t,0), \\
y(0,x)=y^0(x),
\end{dcases}
\quad (t,x) \in R_T.
\end{equation}
We point out that $u=0$ is a natural control that brings the solution of system \eqref{syst cons laws} to $0$ in time $T_m+T_{m+1}$.
In most situations though, a better choice of control possibly brings the solution to $0$ faster, in a smaller time.
In \cite{Rus78} the author started to look more closely at properties of the matrix $Q$ that could lead to a characterization of the minimal control time.
However, he did not succeed and he left this as an open problem (\cite[Remark p. 656]{Rus78}).
This problem was completely solved few years later in \cite{Wec82}.
In that paper, the author obtained an explicit expression of the minimal control time for \NC\footnote{Let us warn the reader that, in \cite{Wec82}, the author calls ``exact controllability'' what is in fact commonly called nowadays null controllability.} of the system of conservation laws \eqref{syst cons laws} in terms of some indices related to $Q$.
In particular, he showed in \cite[Section 4]{Wec82} that none of the attempts in \cite{Rus78} produced the minimal control time.

Finding the minimal control time is a problem that was also independently solved in \cite{CN19} but assuming in addition $Q \in \setCN_p$ (see \eqref{def setCN} below for the definition of this set), and also in \cite{HO21-JMPA} but assuming $\rank Q=p$.
Finally, the problem was then revisited in its generality in \cite{HO22-JDE}.
It is this last approach that we wish to present in Section \ref{sect cons laws}.
There are two notable advantages over the work \cite{Wec82}: 1) the formula obtained for the minimal control time with this approach is simpler, 2) the proof does not use the so-called duality, which immediately allows the approach to be extended to other functional frameworks, such as $C^1$ instead of $L^2$.

\subsection{Canonical decomposition of boundary coupling matrices}

We start with a key definition.

\begin{definition}\label{def canon}
A boundary coupling matrix $Q \in \R^{p \times m}$ is in canonical form if it has at most one nonzero entry on each row and each column, and this entry is equal to $1$.
We denote by $(r_1,c_1), \ldots, (r_{\rho},c_{\rho})$ the positions of the corresponding nonzero entries, with $r_1<\cdots<r_{\rho}$.
\end{definition}

\begin{example}\label{ex Qzero}
The following matrix is in canonical form:
$$
\begin{pmatrix}
0 & \fbox{$1$} & 0 \\
0 & 0 & \fbox{$1$} \\
0 & 0 & 0 \\
\fbox{$1$} & 0 & 0
\end{pmatrix}
\quad
\begin{array}{l}
\rightarrow (r_1,c_1)=(1,2), \\
\rightarrow (r_2,c_2)=(2,3), \\
\rightarrow \times \, \text{not a $r_k$,} \\
\rightarrow (r_3,c_3)=(4,1).
\end{array}
$$
\end{example}

\begin{remark}\label{rem Tinf can syst}
Boundary coupling matrices in canonical form yield the simplest possible systems, and the minimal control time of such systems is trivially determined.
Indeed, because of this very simple coupling structure, the system consists in uncoupled transport equations and $1+1$ systems.
It is then clear that the system is:
\begin{itemize}
\item
\NC in time $T$ if, and only if,
\begin{equation}\label{cond T NC}
T \geq \max \ens{T_m, T_{m+1}, T_Q},
\quad T_Q=\max_{k \leq \rho} \, T_{m+r_k}+T_{c_k},
\end{equation}
(moreover, $u=0$ is a control).
Above, we omit $T_Q$ in case $Q=0$.

\item
\EC in time $T$ if, and only if, $Q$ has no zero rows and (since $r_k=k$ for all $k$ in that case)
\begin{equation}\label{cond T EC}
T \geq \max \ens{T_m, T_Q},
\quad T_Q=\max_{k \leq p} \, T_{m+k}+T_{c_k}.
\end{equation}
\end{itemize}

\end{remark}

Any matrix can be put in a unique canonical form:

\begin{theorem}\label{thm can decomp BC mat}
For every $Q \in \R^{p \times m}$, there exists a unique $\Qcan \in \R^{p \times m}$ which is in canonical form and such that
$$LQU=\Qcan,$$
for some unit lower triangular matrix $L \in \R^{p \times p}$ and some invertible upper triangular matrix $U \in \R^{m \times m}$.
We call $\Qcan$ the canonical form of $Q$.
\end{theorem}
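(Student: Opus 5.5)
Existence will be handled by a Gaussian elimination adapted to the allowed operations, and uniqueness by rank invariants that these operations preserve. Multiplying $Q$ on the left by a unit lower triangular $L$ adds multiples of a row to rows \emph{below} it. Multiplying on the right by an invertible upper triangular $U$ rescales columns and adds multiples of a column to columns to its \emph{right}.

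\textbf{Existence.} I will process the rows $r=1,\ldots,p$ in increasing order. Suppose the rows before $r$ are already reduced: each is either zero or has a single entry $1$, in a column containing no other nonzero entry among the rows treated so far. Look at row $r$.
\begin{itemize}
\item If row $r$ is zero, I move on to row $r+1$.
\item Otherwise, let $c$ be the index of its \emph{last} nonzero entry. I rescale column $c$ to make this entry $1$, then subtract multiples of column $c$ from the columns $c'<c$ to kill the other entries of row $r$. This is not allowed by right multiplication by an upper triangular $U$, which only adds a column to columns on its right.
\end{itemize}
So I choose the \emph{first} nonzero entry $c$ of row $r$ instead. I rescale column $c$ and add multiples of it to the columns $c'>c$ to clear row $r$ to the right of $c$; entries of row $r$ to the left of $c$ are already zero. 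These column operations do not disturb the previously reduced rows, because column $c$ is zero in those rows. Indeed, any earlier pivot column was cleared in all of its own row, and an earlier row had its single $1$ in a column different from $c$, since otherwise row $r$ would have been cleared in column $c$ at that step.

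Next I clear column $c$ below row $r$ by subtracting multiples of row $r$ from the rows below it. This is a lower unit triangular operation. After processing all rows, every nonzero row has exactly one entry equal to $1$, and every column has at most one nonzero entry. The accumulated $L$ is unit lower triangular and $U$ is invertible upper triangular, which proves existence.

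\textbf{Uniqueness.} For $Q$ and indices $i,j$, set $\rho_{ij}(Q)$ to be the rank of the submatrix formed by rows $i,\ldots,p$ and columns $1,\ldots,j$; this is the bottom-left corner block. Because $L$ is lower triangular, the rows $i,\ldots,p$ of $LQ$ are combinations of the rows $i,\ldots,p$ of $Q$, with the combination given by an invertible unit lower triangular block. Because $U$ is upper triangular, the columns $1,\ldots,j$ of $QU$ are combinations of the columns $1,\ldots,j$ of $Q$, via an invertible block. Hence every $\rho_{ij}$ is invariant under $Q\mapsto LQU$.

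For a matrix in canonical form, $\rho_{ij}$ counts the positions $(r_k,c_k)$ with $r_k\ge i$ and $c_k\le j$. These counting numbers determine the set of positions: $(i,j)$ is a position if and only if
$$\rho_{ij}-\rho_{i+1,j}-\rho_{i,j-1}+\rho_{i+1,j-1}=1,$$
with the convention that the terms are zero when out of range. So two canonical forms of the same $Q$ have identical positions and are therefore equal.

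The main obstacle is the bookkeeping in the existence step: checking that the chosen pivot order keeps the previously reduced rows and columns intact, namely that pivots are taken as the first nonzero entry and that column $c$ is zero in earlier rows. The invariance argument for uniqueness is routine.
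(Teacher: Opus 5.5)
\textbf{The gap is in the uniqueness part.} Your key invariance claim is false. For lower triangular $L$, row $k$ of $LQ$ is $\sum_{l\le k} l_{kl}Q_l$. So rows $i,\ldots,p$ of $LQ$ also involve rows $1,\ldots,i-1$ of $Q$. This is exactly the ``add a row to the rows below it'' operation you described in your first paragraph. Consequently the bottom-left ranks $\rho_{ij}$ are \emph{not} preserved. Take $p=2$, $m=1$, $Q=(1,0)^\tr$ (already canonical) and $L$ with $l_{21}=1$. Then $LQ=(1,1)^\tr$ has $\rho_{21}=1$, while $Q$ has $\rho_{21}=0$, even though $(1,0)^\tr$ is the canonical form of $(1,1)^\tr$. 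So the step ``two canonical forms of $Q$ share the invariants of $Q$'' is not justified as written.

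\textbf{The repair.} Use the leading (top-left) blocks, which are the ones that lower and upper triangular factors respect. Write $A^{(i,j)}$ for the submatrix of rows $1,\ldots,i$ and columns $1,\ldots,j$. Then
\[
(LQU)^{(i,j)}=L^{(i,i)}\,Q^{(i,j)}\,U^{(j,j)},
\]
with $L^{(i,i)}$ and $U^{(j,j)}$ invertible, so $\rho_{ij}(Q)=\operatorname{rank} Q^{(i,j)}$ is invariant. For a matrix in canonical form, $\rho_{ij}$ counts the positions with $r_k\le i$ and $c_k\le j$. Then $(i,j)$ is a position if and only if $\rho_{ij}-\rho_{i-1,j}-\rho_{i,j-1}+\rho_{i-1,j-1}=1$, with $\rho_{0,j}=\rho_{i,0}=0$. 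With this change your uniqueness argument is complete and self-contained. That is worth having, since the paper only sketches existence by Gaussian elimination and refers elsewhere for uniqueness.

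\textbf{Existence.} Your existence part is correct and is the Gaussian elimination the paper describes. Delete the false start with the \emph{last} nonzero entry. Also state explicitly the invariant your bookkeeping relies on: once an earlier pivot column has become the unit vector $e_{r'}$, it never changes again. Later column operations add to it the multiple $-(\text{entry of the current pivot row in that column})=0$. Later row operations only modify the current pivot column.
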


With the above result, we can extend the definition of the indices $(r_1,c_1), \ldots, (r_{\rho},c_{\rho})$ to any nonzero matrix:

\begin{definition}\label{def rkck can form}
For any nonzero matrix $Q \in \R^{p \times m}$, we denote by $(r_1,c_1), \ldots, (r_{\rho},c_{\rho})$ the positions of the nonzero entries of its canonical form ($r_1<\cdots<r_{\rho}$).
\end{definition}

The existence part in the theorem is just a consequence of the Gaussian elimination.
The matrix $L$ corresponds to downward row substitutions and the matrix $U$ corresponds to rightward column substitutions and then normalization to $1$ of the remaining nonzero entries.

\begin{example}\label{ex Q}
The canonical form of the boundary coupling matrix
$$
Q=
\begin{pmatrix}
0 & 1 & 2 \\
0 & 2 & 5 \\
0 & 1 & 2 \\
4 & -4 & 4
\end{pmatrix},
$$
is the matrix of Example \ref{ex Qzero} (see e.g. \cite[Example 2.6.6]{Oli25-notes} for details).
\end{example}

The uniqueness of the decomposition is less straightforward though, because there may be several pairs $(L,U)$ that produce the canonical form of $Q$.
For a proof, we refer for instance to \cite[Section 2.6]{Oli25-notes}.

\subsection{The minimal control time}

We are now in position to completely and simply characterize the controllability properties of the system of conservation laws \eqref{syst cons laws}:

\begin{theorem}[\cite{HO22-JDE}]\label{main thm syst cons laws}
Assume \eqref{order times}.
Let $(r_k,c_k)$ be the indices of $Q$ (Definition \ref{def rkck can form}).
Let $T>0$.
System \eqref{syst cons laws} is:
\begin{itemize}
\item
\NC in time $T$ if, and only if, $T$ satisfies \eqref{cond T NC} (as before, we omit $T_Q$ in the formula in case $Q=0$).

\item
\EC in time $T$ if, and only if, $\rank Q=p$ and $T$ satisfies \eqref{cond T EC}.
\end{itemize}
\end{theorem}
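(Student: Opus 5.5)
The plan is to reduce system \eqref{syst cons laws} with a general $Q$ to the system with boundary coupling matrix $\Qcan$, and then read off the answer from Remark \ref{rem Tinf can syst}. By Theorem \ref{thm can decomp BC mat}, write $LQU=\Qcan$ with $L$ unit lower triangular and $U$ invertible upper triangular. The key claim is that the systems $(\Lambda,Q,0)$ and $(\Lambda,\Qcan,0)$ have the same \NC and \EC properties for every $T$. Then the \NC statement is exactly \eqref{cond T NC} with the indices of $\Qcan$, which by Definition \ref{def rkck can form} are the indices of $Q$. For \EC, we use that $\rank Q=\rank \Qcan$. Moreover, $\Qcan$ has no zero rows if and only if its rank is $p$, and then $r_k=k$, which gives \eqref{cond T EC}.

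To prove the claim, we do not want a static change of variables $y\mapsto Py$: it would not commute with $\Lambda$ unless the speeds coincide. Instead we build a transformation that is compatible with transport along the characteristics. The triangular structure combined with the ordering \eqref{order times} is what should make this possible.

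Take $U$ first. Its entry $U_{ij}$ with $i<j$ mixes component $y_{i}$, which has the smaller time $T_i\le T_j$, into $y_j$ at the boundary $x=0$. We define a new state $\tilde y_-$ as follows. Component $j$ is $y_j$ minus a combination of the components $y_i$, $i<j$, \emph{delayed in time}. Concretely, we set
$$\tilde y_j(t,x)=y_j(t,x)-\sum_{i<j}\alpha_{ij}\, y_i\bigl(t-\tau_{ij}(x),\chi\bigr),$$
where $\tau_{ij}$ and $\chi$ are chosen so that the resulting expression is transported with speed $\lambda_j$. This is possible because a faster (in absolute value) signal can be delayed to match a slower one. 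We then have to check that the boundary data at $x=1$ are related by an invertible causal (Volterra-type, with delays) map $u\mapsto\tilde u$.

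The treatment of $L$ is symmetric. For $i>j$, the entry $L_{ij}$ mixes $y_{m+j}$ into $y_{m+i}$, and $T_{m+j}\ge T_{m+i}$. Again this mixing can be realised by transporting with delays. The effect of the extra terms on the final state has to be controlled. Near $t=T$, the delayed terms involve values of the faster-reaching component, and these vanish whenever the original solution vanishes.

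The main obstacle is verifying that these delay transformations preserve null controllability in the same time $T$, and not just up to some extra time. Two points must be shown. First, whenever $y(T)=0$, the transformed state also vanishes at $T$, and conversely. Second, the maps on the initial data are isomorphisms of $L^2(0,1)^n$, which is needed for \EC. The ordering \eqref{order times} is exactly what guarantees that the delays are nonnegative and the domains of dependence stay inside $R_T$.

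If the explicit construction becomes too cumbersome, there is a fallback. One could elementarily handle a single row operation or a single column operation, using a $1+1$ or $2+1$ subsystem argument, and then iterate. Finally, necessity in the canonical case follows from Remark \ref{rem Tinf can syst}. Necessity of $\rank Q=p$ for \EC follows from the same equivalence, since a zero row of $\Qcan$ leaves a component $y_{m+r}$ with zero inflow, so it cannot reach an arbitrary $y^1$.
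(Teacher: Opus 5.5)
Your reduction to $\Qcan$ through Theorem \ref{thm can decomp BC mat} and Remark \ref{rem Tinf can syst} is the paper's route, and so is the bookkeeping on the indices and on $\rank Q=\rank\Qcan$. The gap is in the equivalence step for $U$: you have the triangular structure backwards. From $Qy_-=(QU)\tilde y_-$ you need $\tilde y_-=U^{-1}y_-$, where $U^{-1}$ is upper triangular. So the new component $\tilde y_\alpha$ must absorb the $y_\beta$ with $\beta\geq\alpha$, that is, the \emph{slower} components ($T_\beta\geq T_\alpha$).

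Your formula does the opposite: $\tilde y_j$ absorbs the faster $y_i$, $i<j$. On the traces at $x=0$ it therefore acts as $\Id_m-A$ with $A$ strictly lower triangular, and it realizes $Q\mapsto Q(\Id_m-A)^{-1}$, a \emph{lower} triangular column operation. No controllability equivalence of that kind can exist. For $m=2$, $p=1$ it would identify $\begin{pmatrix}1&1\end{pmatrix}$ with $\begin{pmatrix}0&1\end{pmatrix}$. The minimal null-control time of the first is $\max\{T_2,T_1+T_3\}$ (choose $u_1$ so that $y_1(t,0)=-y_2(t,0)$ for $t>T_1$); that of the second is $T_2+T_3$. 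These differ as soon as $T_1<T_2$. This step is also not ``symmetric'' to your row step, where you correctly let the faster $y_{m+i}$ absorb the slower $y_{m+j}$.

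Your justification that the delays can be taken nonnegative fails for the same reason. Take constant speeds. For $y_i(t-\tau,\chi)$ to be transported at the slower speed $\lambda_j$, the point $(t-\tau,\chi)$ must lie on the $\lambda_i$-characteristic through $(t,(\lambda_i/\lambda_j)x)$. That forces $\chi=(\lambda_i/\lambda_j)x+\abs{\lambda_i}\tau$. With $\tau\geq 0$ this is at least $(\lambda_i/\lambda_j)x$, which exceeds $1$ for $x$ near $1$ once $\abs{\lambda_i}>\abs{\lambda_j}$. Only negative delays, i.e. future values of the state and of the control, would bring you back into $(0,1)$.

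What works (Proposition \ref{thm equiv syst}) goes the other way and needs no delay at all. For the columns take $\tilde y_\alpha(t,x)=\sum_{\beta\geq\alpha}u^{\alpha\beta}y_\beta(t,(\lambda_\beta/\lambda_\alpha)x)$, where $u^{\alpha\beta}$ are the entries of $U^{-1}$. For the rows take $\tilde y_{m+\alpha}(t,x)=\sum_{\beta\leq\alpha}l_{\alpha\beta}y_{m+\beta}(t,(\lambda_{m+\beta}/\lambda_{m+\alpha})x)$. For variable speeds, replace the rescaled point by $\phi(x)$ defined by $\int_0^{\phi(x)}\abs{\lambda_\beta}^{-1}\,d\xi=\int_0^x\abs{\lambda_\alpha}^{-1}\,d\xi$. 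In each case a faster component absorbs slower ones, read at points closer to $x=0$. Hypothesis \eqref{order times} is exactly what guarantees $\phi(x)\in(0,1)$ for $x\in(0,1)$.

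This is a single isomorphism of $L^2(0,1)^n$ applied to $y(t,\cdot)$ at each fixed $t$. It therefore maps $y(T,\cdot)=0$ to $\tilde y(T,\cdot)=0$ and initial data bijectively onto initial data, so the ``main obstacle'' you single out disappears. With genuine time delays, by contrast, $\tilde y(t,\cdot)$ for small $t$ would involve $y$ at negative times, and there would be no well-defined correspondence of initial data.
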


The proof, taken from \cite{HO22-JDE}, is based on the notion of \imp{equivalent system} (other proofs are possible, see e.g. \cite[Section 2.5]{Oli25-notes}, and recall that the minimal time for the \NC was first found in \cite{Wec82}, however with a different formula).
The main idea is to transform system \eqref{syst cons laws} into a system that is equivalent from a controllability point of view and that has a simpler coupling structure (i.e. a simpler $Q$).
Let us make this notion precise here.
In what follows, it will be convenient to refer to a system of the form \eqref{syst cons laws} as
$$(\Lambda,Q).$$
We say that $y$ is a solution to $(\Lambda,Q)$ in $(0,T)$ if it is a solution to system \eqref{syst cons laws} for some initial data $y^0$ and function $u$.

\begin{definition}\label{def syst equiv}
We say that two systems $(\Lambda,Q_1)$ and $(\Lambda,Q_2)$ are equivalent, and we write
$$(\Lambda,Q_1) \sim (\Lambda,Q_2),$$
if there exists an isomorphism $L:L^2(0,1)^n \longrightarrow L^2(0,1)^n$ such that, for every $T>0$, $y$ is a solution to $(\Lambda,Q_1)$ in $(0,T)$ if, and only if, $\tilde{y}(t,\cdot)=L(y(t,\cdot))$ is a solution to $(\Lambda,Q_2)$ in $(0,T)$.
\end{definition}

It is easy to check that $\sim$ is an equivalence relation.
It is also not difficult to check that two equivalent systems share the same controllability properties.

\begin{remark}
The notion of equivalent systems introduced in the previous definition is not concerned with how the control from one system is obtained from the control of the other system.
It is different from the notion of \imp{feedback equivalence} introduced in the seminal work \cite{Bru70} in finite dimension (see also \cite[Section 4]{Cor15}), which was designed to transfer the \imp{stabilization} properties of one system to another and thus required a more specific link between the two systems.
\end{remark}

The key result is the following.

\begin{proposition}\label{thm equiv syst}
Assume \eqref{order times}.
Then, $(\Lambda,Q) \sim (\Lambda,LQU)$ for any invertible lower triangular matrix $L \in \R^{p \times p}$ and any invertible upper triangular matrix $U \in \R^{m \times m}$.
\end{proposition}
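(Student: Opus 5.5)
We will construct the isomorphism explicitly and reduce to elementary operations. Every invertible lower triangular $L$ is a product of an invertible diagonal matrix and elementary matrices $\Id_p+\alpha e_ie_j^\tr$ with $i>j$. Every invertible upper triangular $U$ is a product of an invertible diagonal matrix and matrices $\Id_m+\beta e_ie_j^\tr$ with $i<j$. Since $\sim$ is an equivalence relation, it suffices to prove $(\Lambda,Q)\sim(\Lambda,LQ)$ and $(\Lambda,Q)\sim(\Lambda,QU)$ for each such elementary factor separately.

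Diagonal factors are trivial: take $\tilde y_+=Dy_+$ (respectively $\tilde y_-=D^{-1}y_-$). This commutes with the diagonal $\Lambda$ and transforms the boundary condition appropriately, while the control on $y_-$ is merely rescaled.

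\textbf{Row operation on $Q$.} Take $L=\Id_p+\alpha e_ie_j^\tr$ with $i>j$. We want $\tilde y$ with $\tilde y_-=y_-$, $\tilde y_{+,k}=y_{+,k}$ for $k\neq i$, and $\tilde y_{+,i}(0)=y_{+,i}(0)+\alpha y_{+,j}(0)$. The trace $y_{+,j}(t,0)$ travels with speed $\lambda_{m+j}$, while $\tilde y_{+,i}$ must travel with speed $\lambda_{m+i}$. Here \eqref{order times} gives $T_{m+i}\le T_{m+j}$, so component $j$ is the slower one. Define
$$\tilde y_{+,i}(t,x)=y_{+,i}(t,x)+\alpha\, y_{+,j}\bigl(t-\tau(x),\phi(x)\bigr),$$
where $\phi:[0,1]\to[0,1]$ and $\tau$ are chosen so that the composed function is transported with speed $\lambda_{m+i}$ and equals $y_{+,j}(t,0)$ at $x=0$. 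Concretely, let $\phi$ be defined through the travel times, $\int_0^{\phi(x)}\frac{d\xi}{\lambda_{m+j}}=\theta\int_0^x\frac{d\xi}{\lambda_{m+i}}$, or, more simply, invert the characteristic parametrization: the term should be the value of $y_{+,j}$ at the point reached from $(t,x)$ by going back along the $i$-characteristic to $x=0$ and then forward along the $j$-characteristic for the same time. Because $T_{m+i}\le T_{m+j}$, that point stays inside $[0,1]$. This expresses $\tilde y(t,\cdot)$ as $y(t,\cdot)$ plus a bounded linear map of $y_{+,j}(t,\cdot)$ (a change of variable with Lipschitz bi-Lipschitz Jacobian), triangular with identity diagonal, hence an $L^2$ isomorphism.

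\textbf{Column operation on $Q$.} The column case $U=\Id_m+\beta e_ie_j^\tr$ with $i<j$ is symmetric. We define $\tilde y_{-,j}=y_{-,j}+\beta\,(y_{-,i}\text{ composed with a change of variables})$. This uses $T_i\le T_j$: backward along the $j$-characteristic from $(t,x)$ to $x=0$, then along the faster $i$-characteristic, stays in the domain. At $x=1$ the new component differs from the old by a term we absorb into the new control; the arbitrariness of $u$ is what makes the solution sets correspond.

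\textbf{Main obstacle.} The hard step is designing the change of variables $\phi$ so that the added term is transported at the right speed, so that the map is independent of $t$ and $T$ as Definition \ref{def syst equiv} requires, and so that it is invertible on $L^2$. I would first try the map $x\mapsto \chi_{m+j}\bigl(s_i(x);0,0\bigr)$, where $s_i(x)$ is the time to reach $x$ from $0$ along $\lambda_{m+i}$. This map sends $[0,1]$ into $[0,1]$ exactly when $T_{m+i}\le T_{m+j}$. I would then verify, in the broad-solution sense, that $\tilde y$ solves the transformed system. The converse direction follows from the inverse map, which corresponds to $-\alpha$ (respectively $-\beta$).
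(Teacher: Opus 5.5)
Your row step is correct and matches the paper's transformation. Drop the time shift ($\tau\equiv0$, $\theta=1$); Definition \ref{def syst equiv} requires $\tilde y(t,\cdot)$ to depend on $y(t,\cdot)$ only. Then $\phi(x)=\chi_{m+j}(s_i(x);0,0)$ satisfies $\phi'(x)=\lambda_{m+j}(\phi(x))/\lambda_{m+i}(x)$. Hence $y_{+,j}(t,\phi(x))$ is transported at speed $\lambda_{m+i}$, $\phi(0)=0$ gives the boundary relation with $LQ$, and $\phi(1)\le 1$ is exactly $T_{m+i}\le T_{m+j}$. For constant speeds $\phi(x)=\frac{\lambda_{m+j}}{\lambda_{m+i}}x$, which is the paper's formula. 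Factoring $L$ into elementary matrices instead of writing one triangular sum is a harmless reorganization, and your $\phi$ also covers variable speeds.

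The column step, however, is oriented the wrong way and fails. You modify the slower component $y_{-,j}$ using the faster $y_{-,i}$ ($i<j$). At $x=0$ this gives $\tilde y_-(t,0)=(\Id+\beta e_je_i^\tr)y_-(t,0)$, so the new coupling matrix is $Q(\Id-\beta e_je_i^\tr)$. That is a right multiplication by a \emph{lower} triangular matrix, not $QU$. Geometrically, let $\sigma_k(x)=\int_0^x|\lambda_k(\xi)|^{-1}\,d\xi$. The term $y_{-,i}(t,\phi(x))$ is transported at speed $\lambda_j$ only for $\phi=\sigma_i^{-1}\circ\sigma_j$, and $\phi(1)\le 1$ would need $T_j\le T_i$, the opposite of \eqref{order times}. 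Moving for the same time along the faster characteristic takes you past $x=1$; for constant speeds $\phi(x)=\frac{\lambda_i}{\lambda_j}x$ with $\frac{\lambda_i}{\lambda_j}\ge 1$. So your claim that the point ``stays in the domain'' is false.

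No change of variables can rescue this orientation, because the resulting equivalence is false. Take $m=2$, $p=1$, $T_1<T_2$, $Q=(0,1)$: it would give $(\Lambda,(0,1))\sim(\Lambda,(-\beta,1))$. By Theorem \ref{main thm syst cons laws}, the minimal null control times of these two systems are $T_2+T_3$ and $\max\{T_2,T_1+T_3\}$, which differ.

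The fix mirrors your row step: always modify the component with the smaller travel time. You need $\tilde y_-(t,0)=U^{-1}y_-(t,0)$ with $U^{-1}=\Id-\beta e_ie_j^\tr$. So set $\tilde y_{-,i}(t,x)=y_{-,i}(t,x)-\beta\,y_{-,j}(t,\psi(x))$ with $\psi=\sigma_j^{-1}\circ\sigma_i$. This $\psi$ maps $[0,1]$ into $[0,1]$ precisely because $T_i\le T_j$; for constant speeds $\psi(x)=\frac{\lambda_j}{\lambda_i}x$, the paper's map. The new control is $\tilde u_i(t)=u_i(t)-\beta\,y_{-,j}(t,\psi(1))$. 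It lies in $L^2(0,T)$ because $y\in C^0([0,1];L^2(0,T)^n)$, and you should state this rather than just saying the term is ``absorbed into the new control''.
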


Applying this result, we have in particular that
$$(\Lambda,Q) \sim (\Lambda,\Qcan),$$
where $\Qcan$ is the canonical form of $Q$ (see Theorem \ref{thm can decomp BC mat}).
Therefore, Theorem \ref{main thm syst cons laws} is an immediate consequence of Proposition \ref{thm equiv syst} and Remark \ref{rem Tinf can syst}.

\begin{proof}[Proof of Proposition \ref{thm equiv syst}]
This is a special case of \cite[Proposition 4.1]{HO22-JDE}.
For simplicity, we give the proof in the case of constant speeds $\lambda_k(x)=\lambda_k$ (then assumption \eqref{order times} is equivalent to \ref{hyp speeds are ordered}).

\begin{itemize}
\item
We first prove that
$$(\Lambda,Q) \sim (\Lambda,LQ).$$
To this end, we are going to use the spatial transformation
$$
\tilde{y}_-=y_-, \quad
\tilde{y}_{m+\alpha}(t,x)=
\sum_{\beta \leq \alpha} l_{\alpha\beta} y_{m+\beta}\left(t, \frac{\lambda_{m+\beta}}{\lambda_{m+\alpha}} x\right) \quad (1 \leq \alpha \leq p).
$$
Note that $(\lambda_{m+\beta}/ \lambda_{m+\alpha}) x \in (0,1)$ for $\beta \leq \alpha$ thanks to \eqref{hyp speeds are ordered}.
We easily check that this transformation is invertible, with inverse given by
\begin{equation}\label{proof equiv invers transfo}
y_-=\tilde{y}_-, \quad
y_{m+\beta}(t,x)=
\sum_{\gamma \leq \beta} l^{\beta\gamma} \tilde{y}_{m+\gamma}\left(t, \frac{\lambda_{m+\gamma}}{\lambda_{m+\beta}} x\right) \quad (1 \leq \beta \leq p),
\end{equation}
where $l^{\beta\gamma}$ denotes the $(\beta,\gamma)$ entry of $L^{-1}$.

\item
Assume now that $y$ is a solution to the system $(\Lambda,Q)$ for some $(y^0,u)$ and let us show that $\tilde{y}$ is then a solution to the system $(\Lambda,LQ)$ for some $(\tilde{y}^0,\tilde{u})$.

\begin{itemize}
\item
\textbf{Initial condition.}
Obviously, it is given by
$$
\tilde{y}_-^0=y_-^0, \quad
\tilde{y}^0_{m+\alpha}(x)=
\sum_{\beta \leq \alpha} l_{\alpha\beta} y^0_{m+\beta}\left(\frac{\lambda_{m+\beta}}{\lambda_{m+\alpha}} x\right) \quad (1 \leq \alpha \leq p).
$$

\item
\textbf{Boundary conditions.}
At $x=0$ it is clearly satisfied since $\tilde{y}_-=y_-$ and $\tilde{y}_+(t,0)=Ly_+(t,0)$.
At $x=1$ this just defines the control $\tilde{u}=u$.

\item
\textbf{Equations.}
It is clear that $\tilde{y}_k=y_k$ satisfies the desired equation for $k \leq m$.
Let us now fix $1 \leq \alpha \leq p$.
Using the equations satisfied by $y_+$, we have
\begin{multline*}
\pt{\tilde{y}_{m+\alpha}}(t,x)
+\lambda_{m+\alpha}(x)\px{\tilde{y}_{m+\alpha}}(t,x)
=
\\
\sum_{\beta \leq \alpha} l_{\alpha\beta}
\underbrace{\left(
-\lambda_{m+\beta}
+\lambda_{m+\alpha} \frac{\lambda_{m+\beta}}{\lambda_{m+\alpha}}
\right)}_{=0}
\px{y_{m+\beta}} \left(t, \frac{\lambda_{m+\beta}}{\lambda_{m+\alpha}} x\right).
\end{multline*}
Consequently, $\tilde{y}_{m+\alpha}$ also satisfies the desired equation.

\end{itemize}

Similar computations show that, conversely, if $\tilde{y}$ is a solution to the system $(\Lambda,LQ)$ for some $(\tilde{y}^0,\tilde{u})$, then $y$ defined by the inverse transformation \eqref{proof equiv invers transfo} is a solution to the system $(\Lambda,Q)$ for some $(y^0,u)$.

\item
Similarly, we can show that
$$(\Lambda,Q) \sim (\Lambda,QU),$$
by using the spatial transformation
$$
\tilde{y}_\alpha(t,x)=
\sum_{\beta \geq \alpha} u^{\alpha\beta} y_\beta\left(t, \frac{\lambda_{\beta}}{\lambda_{\alpha}} x\right) \quad (1 \leq \alpha \leq m),
\quad \tilde{y}_+=y_+,
$$
where $u^{\alpha\beta}$ denotes the $(\alpha,\beta)$ entry of $U^{-1}$.
Let us just mention that the control is then
\begin{equation}\label{link ut u}
\tilde{u}_\alpha(t)
=\sum_{\beta \geq \alpha} u^{\alpha\beta} y_\beta \left(t, \frac{\lambda_{\beta}}{\lambda_{\alpha}}\right) \quad (1 \leq \alpha \leq m),
\end{equation}
which is well defined since $y \in C^0([0,1];L^2(0,T)^n)$.

\end{itemize}
This concludes the proof of Proposition \ref{thm equiv syst}.
\end{proof}

\begin{remark}\label{M=0 feedback}
Since $\tilde{u}=0$ is a control for the \NC of $(\Lambda,\Qcan)$ (see Remark \ref{rem Tinf can syst}), plugging this into formula \eqref{link ut u} (recall that $(\Lambda,Q)$ and $(\Lambda,LQ)$ have the same controls) and extracting the term for $\beta=\alpha$ from the sum to obtain the control $u_\alpha$, we deduce that (still assuming that the speeds are constant)
\begin{equation}\label{syst hom:feedback law}
u_\alpha(t)=- \frac{1}{u^{\alpha\alpha}} \sum_{\beta>\alpha} u^{\alpha\beta} y_\beta\left(t, \frac{\lambda_{\beta}}{\lambda_{\alpha}}\right) \quad (1 \leq \alpha \leq m).
\end{equation}
This gives a formula of the control only in function of the state $y$ at time $t$: $u(t)=F(y(t,\cdot))$.
The linear operator $F$ is called the \imp{feedback law}.
Note that it is an unbounded operator in our case (it is not defined on the state space $L^2(0,1)^n$).
Formula \eqref{syst hom:feedback law} can also be used to write the control $u$ explicitly in function of the initial condition (in fact, $y^0_-$).
Let us mention that a similar feedback law was first obtained in \cite[Section 7]{CN19}, but under the assumption that $Q \in \setCN_{\rhoCN}$ for $\rhoCN=\min\ens{m-1,p}$ (see \eqref{def setCN} below for the definition of this set).
\end{remark}

\section{The minimal time for exact controllability}
\label{sect EC}

We now address the problem of finding the minimal control time for arbitrary $M$.
It was first suggested in \cite{Rus78} that the minimal control time for the \EC could be independent of $M$.
This was investigated by using some perturbation arguments (\cite[Theorem 3.7 and p.~659]{Rus78}).
However, in these results, either $M$ is assumed to be small (in which case the result is in fact not surprising since \EC is a property that is stable by small bounded perturbations, see e.g. \cite[Theorem 4.1]{DR77}), or $M$ is assumed to be such that the corresponding system is approximately controllable (which is in general not easier to check).
This problem was also investigated more recently in \cite{CN19}.
A complete solution was finally obtained in \cite{HO21-JMPA}, under the following assumption:
\begin{equation}\label{hyp NRL86}
\forall i \neq j, \quad (\lambda_i(x_0)=\lambda_j(x_0) \text{ for some } x_0 \in [0,1])
\quad \Longrightarrow \quad
m_{ij}=0.
\end{equation}
The goal of this part is to present the result obtained therein:

\begin{theorem}[\cite{HO21-JMPA}]\label{main thm HO21-JMPA}
Assume \eqref{hyp NRL86}.
Then, the minimal control time for the \EC of system \eqref{syst} does not depend on $M$:
$$\TinfEC(\Lambda,Q,M) =\TinfEC(\Lambda,Q,0).$$
\end{theorem}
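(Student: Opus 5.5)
We will prove the theorem by a compactness--uniqueness argument in the spirit of Russell and Dolecki--Russell. Under \eqref{hyp NRL86}, $M$ acts as a compact perturbation on the relevant part of the system. Write $T^*=\TinfEC(\Lambda,Q,0)$. By Theorem \ref{main thm syst cons laws}, $T^*<\infty$ requires $\rank Q=p$, and then $T^*=\max\{T_m,T_Q\}$.

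The first step is to dispose of the case $\rank Q<p$. In that case we show that $(\Lambda,Q,M)$ is never \EC. Since \EC with $\rank Q<p$ fails for $M=0$ through a boundary constraint, the same constraint should survive the addition of $M$. We will argue by duality: we exhibit a nontrivial family of adjoint data whose observation on $x=1$ vanishes, up to a compact term that cannot restore the observability inequality. This gives $\TinfEC=+\infty$ for both systems.

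Assume now $\rank Q=p$. The second step is to show $\TinfEC(\Lambda,Q,M)\le T^*$; the reverse inequality then follows by the symmetric argument with $M$ and $-M$ exchanged, or equivalently by viewing $0$ as a perturbation of $M$. Fix $T>T^*$. By duality, \EC in time $T$ is equivalent to an observability inequality for the adjoint system $(\Lambda,Q,M)^*$:
\[
\norm{z(T,\cdot)}_{L^2}^2\le C\int_0^T \abs{\Lambda_-(1)z_-(t,1)}^2\,dt .
\]
This inequality holds for $M=0$. We compare the solution $z$ with $M$ to the solution $z^0$ with $M=0$ and the same final data. Their difference is given by a Duhamel/integral operator $K_T$ along the characteristics, which yields an estimate of the form
\[
\norm{z(T,\cdot)}^2\le C\int_0^T\abs{z_-(t,1)}^2dt+\norm{K_T z(T,\cdot)}^2 .
\]
The crux is to prove that $K_T$ is compact from $L^2$ to $L^2(0,T)$. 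For this we will use the representation of solutions along characteristics. After the first reflections, the contributions of the coupling $m_{ij}$ between families $i\neq j$ are integrals over transversal characteristics, and these are smoothing, by an averaging lemma of the type found in Neves--Ribeiro--Lopes. Diagonal terms $m_{ii}$ can be removed beforehand by the exponential change of variables $y_i\mapsto e^{-\int m_{ii}/\lambda_i}y_i$, since it does not affect controllability. Finally, hypothesis \eqref{hyp NRL86} guarantees that no nonsmoothing coupling between coinciding speeds remains.

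The third step is the uniqueness part of the argument. Compactness gives that the set of unobservable data is finite dimensional and invariant under the adjoint semigroup. We then need to show it is trivial, i.e.\ that no eigenfunction $z$ of the adjoint operator satisfies $z_-(\cdot,1)=0$, for every $T>T^*$. Our plan for this is a Fattorini--Hautus test, which reduces the question to an ODE in $x$ for a spectral problem. With $z_-(1)=0$ and the boundary condition at $0$, one must show $z\equiv0$. We expect this to need the full rank of $Q$ together with a shrinking-time argument: the dimension of the unobservable space is nonincreasing and integer valued as $T\downarrow T^*$, hence eventually constant, and this lets us run the argument at times arbitrarily close to $T^*$. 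We expect this uniqueness step, together with the proof that $K_T$ is compact, to be the main obstacle. If a direct Hautus argument fails, we will instead use the Proposition \ref{thm equiv syst} type transformation (a backstepping/Volterra transform) to reduce the adjoint eigenproblem to the canonical form $\Qcan$, where $z_-(1)=0$ propagates trivially through the decoupled $1+1$ blocks.
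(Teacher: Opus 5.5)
Your architecture is the paper's. You use duality, then an observability inequality with compact remainder obtained by comparing with a reference coupling whose adjoint semigroup is explicit. Compactness of the difference comes from changes of variables along transversal characteristics (this is where \eqref{hyp NRL86} enters), and the compactness--uniqueness Theorem \ref{thm DO18} concludes. Taking $M_0=0$ after removing $\diag M$, instead of the paper's $M_0=-\Lambda'$, is harmless. What is needed is that $M-M_0$ has zero diagonal: the $j=i$ terms of $\Psi$ are not compact in general, because the composed characteristic does not depend on $s$. Both choices arrange this.

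The flaw is in the step you single out as the main obstacle. The Fattorini--Hautus test is immediate. If $z\in\ker(\lambda-\opA{M}^*)\cap\ker B^*$, then $\Lambda z'=(\lambda\Id_n-\Lambda'-M^\tr)z$ on $(0,1)$. Moreover $z_-(1)=0$ because $B^*z=0$, and $z_+(1)=0$ because $z\in D(\opA{}^*)$. Hence $z(1)=0$, and $z\equiv0$ by uniqueness for this linear ODE. This holds for every $\lambda\in\C$, every $Q$ and every $M$. No time argument is needed: the finite-dimensional invariant subspace argument is already inside Theorem \ref{thm DO18}, which requires only this test and the compact-remainder inequality.

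Your plan drops $z_+(1)=0$ and tries to conclude from $z_-(1)=0$ and $z_-(0)=\Qadj^\tr z_+(0)$ using $\rank Q=p$. This does not work. Take $m=p=1$, $\lambda_1=-1$, $\lambda_2=1$, $Q=q\neq0$, and $M$ whose only nonzero entry is a constant $m_{21}$, so that \eqref{hyp NRL86} holds. Then these two conditions admit a nontrivial solution of the eigen-ODE exactly at the zeros of the entire function $\lambda\mapsto m_{21}\frac{\sinh\lambda}{\lambda}+qe^{-\lambda}$, and there are infinitely many. Your fallback does not help either. Proposition \ref{thm equiv syst} concerns $M=0$ only, and for $M\neq0$ the available transformations (e.g. backstepping) produce the nonlocal term $G$, not decoupled $1+1$ blocks with matrix $\Qcan$.

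Once the test is known for all $Q$, Corollary \ref{cor DO18} gives the equality of the minimal times including the value $+\infty$, since the argument runs in both directions. So your separate first step for $\rank Q<p$ is unnecessary. As written it is also only heuristic: a compact term is harmless only along a weakly null non-observable sequence, which you would have to construct.

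A smaller point concerns the compactness step. Up to sign, your $K_T z$ is the trace $B^*\int_0^t\opS{0}(t-s)^*P^*\opS{M}(s)^*z\,ds$, which still contains the non-explicit semigroup $\opS{M}$. The characteristic computation applies only to the explicit first-order term $\Psi z$. To pass to $K_T$ you need three ingredients:
\begin{itemize}
\item the Volterra identity $(\Id-V)h=\Psi z$;
\item the invertibility of $\Id-V$;
\item the hidden-regularity estimate \eqref{key estim} for the trace.
\end{itemize}
You must also check separately that $B^*\Psi$ is compact, as in Proposition \ref{prop crit K cpct}.
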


We recall that $\TinfEC(\Lambda,Q,0)$ is explicitly known from the previous part about systems of conservation laws (Theorem \ref{main thm syst cons laws}).
Consequently, we immediately deduce:

\begin{corollary}\label{cor main thm EC}
Assume \eqref{hyp NRL86}.
We have:
\begin{enumerate}
\item
$\TinfEC(\Lambda,Q,M)<+\infty$ if, and only if, $\rank Q=p$.

\item\label{cor main thm EC i2}
Assume that $\rank Q=p$ and \eqref{order times} hold.
Let $c_k$ be the indices of $Q$ (Definition \ref{def rkck can form}).
Then,
$$\TinfEC(\Lambda,Q,M)=\max \ens{T_m, T_Q},
\quad T_Q=\max_{k \leq p} \, T_{m+k}+T_{c_k}.$$
\end{enumerate}
\end{corollary}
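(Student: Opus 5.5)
The corollary follows by combining Theorem \ref{main thm HO21-JMPA} with the explicit description of the conservation-law case in Theorem \ref{main thm syst cons laws}. Under \eqref{hyp NRL86}, Theorem \ref{main thm HO21-JMPA} gives $\TinfEC(\Lambda,Q,M)=\TinfEC(\Lambda,Q,0)$. So both items reduce to statements about the system of conservation laws \eqref{syst cons laws}, and $M$ plays no further role.

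For item 1, I first arrange that Theorem \ref{main thm syst cons laws} applies, since it assumes \eqref{order times}. If the speeds do not cross, \eqref{order times} holds after a reordering of the components within $y_-$ and within $y_+$. Such a permutation only permutes the rows and columns of $Q$, which preserves $\rank Q$. In general I would appeal to the statement of Theorem \ref{main thm syst cons laws} under whatever ordering of the $T_k$ is available: since $M=0$, the system only depends on the speeds through the transit times $T_k$, so one may reorder indices so that the $T_k$ satisfy \eqref{order times}. This permutation step is the only point needing care, namely checking that relabeling is a legitimate equivalence of systems, which is clear because relabeling components is an isomorphism of $L^2(0,1)^n$.

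Theorem \ref{main thm syst cons laws} then says that $(\Lambda,Q,0)$ is \EC in some time $T$ if and only if $\rank Q=p$ and $T\geq\max\{T_m,T_Q\}$. Hence $\TinfEC(\Lambda,Q,0)<+\infty$ if and only if $\rank Q=p$, and item 1 follows.

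For item 2, under $\rank Q=p$ and \eqref{order times}, the same theorem shows that the set of times of exact controllability is exactly $[\max\{T_m,T_Q\},+\infty)$. Its infimum is therefore $\max\{T_m,T_Q\}$ with $T_Q=\max_{k\le p}T_{m+k}+T_{c_k}$. Here I use that full row rank forces the canonical form to have no zero rows, so $\rho=p$ and $r_k=k$ as noted in Remark \ref{rem Tinf can syst}. Transferring this value via Theorem \ref{main thm HO21-JMPA} gives the formula for $\TinfEC(\Lambda,Q,M)$.
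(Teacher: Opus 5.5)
Your proposal is correct and follows the paper's route. The paper deduces the corollary immediately by combining Theorem~\ref{main thm HO21-JMPA}, which reduces to $M=0$, with the explicit characterization in Theorem~\ref{main thm syst cons laws}; your reindexing within $y_-$ and $y_+$ to get \eqref{order times} for item 1, and your check that $\rank Q=p$ forces $\rho=p$ and $r_k=k$, make explicit details the paper leaves implicit.
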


\begin{remark}
Even if the second point of the corollary is clearly the most important result, the implication ``$\Longrightarrow$'' in the first point is already non trivial when $M \neq 0$.
Notably, this answers by the negative the open problems 3, 4 and 5 in \cite{LR10-SCAM}, in the linear setting.
\end{remark}

\begin{remark}
Assumption \eqref{hyp NRL86} has been introduced in \cite{NRL86}.
If it is not satisfied, then, in general, the conclusion of Theorem \ref{main thm HO21-JMPA} is no longer true (see \cite[Appendix C]{HO21-JMPA}).
It would be interesting to find the optimal assumptions on $\Lambda,Q,M$ such that the conclusion of the theorem remains valid.
Let us also mention that, in the survey \cite{Rus78}, the author assumed the following different condition:
$$
\forall i,j, \quad (\lambda_i(x_0)=\lambda_j(x_0) \text{ for some } x_0 \in [0,1])
\quad \Longrightarrow \quad
\lambda_i=\lambda_j.
$$
This condition is essentially stronger than \eqref{hyp NRL86} (see \cite[Remark 4.6]{HO21-JMPA}).
It also implies \eqref{hyp speeds are ordered}, after eventually switching some equations of the system.
\end{remark}

The next sections are devoted to the proof of Theorem \ref{main thm HO21-JMPA}.
It is a development of the original ideas of \cite{Rus78}, combined with some results of \cite{DO18} and \cite{NRL86}.

\subsection{Abstract setting}\label{sect abst sett}

To understand the main ideas of the proof of Theorem \ref{main thm HO21-JMPA}, we need to consider system \eqref{syst} from an abstract point of view.
As many other linear PDEs, it is well-known that system \eqref{syst} can be seen as an abstract ODE (see e.g. \cite{Cor07,TW09}):
\begin{equation}\label{abst syst}
\begin{dcases}
\ddt y(t)= \opA{M}y(t)+B u(t), \quad t \in (0,T), \\
y(0)=y^0.
\end{dcases}
\end{equation}
The state and control spaces are
$$H=L^2(0,1;\C)^n, \quad U=\C^m.$$
The unbounded linear operator $\opA{M}: D(\opA{M}^*) \subset H \longrightarrow H$ is 
\begin{align*}
(\opA{M}y)(x) &=-\Lambda(x) y'(x)+M(x) y(x),
\\
D(\opA{M}) &=\ens{y \in H^1(0,1)^n \st \quad y_-(1)=0, \quad y_+(0)=Qy_-(0)}.
\end{align*}
We can easily check that $D(\opA{M})$ is dense in $H$ and that
$$
D(\opA{M}^*) =\ens{z \in H^1(0,1)^n \st \quad z_-(0)=\Qadj^\tr z_+(0), \quad z_+(1)=0},
$$
where
\begin{equation}\label{def Qadj}
\Qadj=-\Lambda_+(0)Q\Lambda_-(0)^{-1},
\end{equation}
(recall \eqref{decomp Lambda}).
Since $D(\opA{M}^*)$ does not depend on $M$, we shall simply denote it by $D(\opA{}^*)$.
Finally, the control operator $B \in \lin{U,D(\opA{}^*)'}$ is
$$
\ps{Bu}{z}{D(\opA{}^*)',D(\opA{}^*)}=- \ps{u}{\Lambda_{-}(1) z_-(1)}{U}.
$$

Even though all the parameters $\Lambda,Q$ and $M$ are real-valued, we have to consider complex spaces in this part because we will use below a condition involving the spectral elements of the operator $\opA{M}$.
This does not change anything regarding the controllability of the system.

It can be shown that the operator $\opA{M}$ generates a $C_0$-semigroup $(\opS{M}(t))_{t \geq 0}$ on $H$ (see e.g. \cite[Appendix A]{BC16}) and that $H$ is admissible for $(\opA{M},B)$ (see e.g. \cite[p. 401]{DO18}), which means that we have the following inequality for all $T>0$:
\begin{equation}\label{admi ineq}
\int_0^T \norm{B^*\opS{M}(t)^*z}_U^2 \, dt \leq C \norm{z}_H^2,
\end{equation}
for every $z \in D(\opA{}^*)$ and some $C>0$ independent of $z$.

In what follows, system \eqref{abst syst} will also be referred to as $(\opA{M},B)$ and the minimal control time will be denoted by $\TinfEC(\opA{M},B)$.

\subsection{The compactness-uniqueness method}
\label{sect compact uniq}

It is well-known that controllability has a dual concept named \imp{observability}.
An application of a standard result of functional analysis gives the following characterization:

\begin{theorem}[\cite{DR77}]\label{thm duality}
System $(\opA{M},B)$ is \EC in time $T$ if, and only if, there exists $C>0$ such that
\begin{equation}\label{obs ineq}
\norm{z}_H^2 \leq C\int_0^T \norm{B^*\opS{M}(t)^*z}_U^2 \, dt,
\end{equation}
for every $z \in D(\opA{}^*)$.
\end{theorem}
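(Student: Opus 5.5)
The plan is to reduce the statement to the standard functional-analytic fact that, for bounded operators between Hilbert spaces, surjectivity of an operator is equivalent to a lower bound for its adjoint.

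\textbf{Step 1: encode the final state as an operator.} By linearity and the well-posedness of \eqref{abst syst}, the solution at time $T$ is
$$y(T)=\opS{M}(T)y^0+F_Tu,$$
where the \emph{input-to-state map} is
$$F_T u=\int_0^T \opS{M}(T-s)Bu(s)\,ds,$$
understood a priori in $D(\opA{}^*)'$. The admissibility inequality \eqref{admi ineq} is exactly what shows that $F_T$ is a bounded operator from $L^2(0,T;U)$ into $H$.

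\textbf{Step 2: compute the adjoint of $F_T$.} For $u$ smooth and $z\in D(\opA{}^*)$, the duality pairing gives
$$\ps{F_Tu}{z}{H}=\int_0^T \ps{u(s)}{B^*\opS{M}(T-s)^*z}{U}\,ds.$$
Hence $(F_T^*z)(s)=B^*\opS{M}(T-s)^*z$. This formula first holds on the dense set $D(\opA{}^*)$, and then extends by continuity, again using \eqref{admi ineq}.

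\textbf{Step 3: reduce controllability to surjectivity.} Exact controllability in time $T$ means that for all $y^0,y^1$ we can solve $F_Tu=y^1-\opS{M}(T)y^0$. Since $y^1$ is arbitrary, this is equivalent to $F_T$ being surjective onto $H$.

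\textbf{Step 4: apply the closed range theorem.} For a bounded operator between Hilbert spaces, surjectivity is equivalent to the existence of $c>0$ with $\norm{F_T^*z}\geq c\norm{z}$ for all $z\in H$. One direction uses the open mapping theorem applied to $F_T$. The other uses the closed range theorem: the lower bound shows that $F_T^*$ is injective with closed range, so $F_T$ has closed range and trivial orthogonal complement of its range.

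\textbf{Step 5: match the inequality with \eqref{obs ineq}.} Substituting $t=T-s$ turns $\norm{F_T^*z}^2$ into $\int_0^T\norm{B^*\opS{M}(t)^*z}_U^2\,dt$. So the lower bound is exactly \eqref{obs ineq}, with $C=1/c^2$. It suffices to verify this inequality for $z\in D(\opA{}^*)$, because both sides are continuous in $z\in H$ by \eqref{admi ineq} and $D(\opA{}^*)$ is dense in $H$.

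\textbf{Main difficulty.} The expected obstacle is Step 1 together with the adjoint computation in Step 2. There one must justify, via the extrapolation space $D(\opA{}^*)'$ and admissibility, that $F_T$ is indeed $H$-valued and bounded, and that the formal adjoint formula is rigorous. After that, the duality argument in Steps 3 to 5 is routine.
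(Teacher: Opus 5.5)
Your proposal is correct and follows the approach the paper has in mind. The paper gives no proof beyond citing \cite{DR77} and calling it an application of a standard functional-analytic result, namely that the bounded input-to-state map $F_T$ is onto $H$ if and only if $F_T^*$ is bounded below; your Steps 1--2 (admissibility making $F_T$ bounded into $H$, and the identification $F_T^*z=B^*\opS{M}(T-\cdot)^*z$) fill in exactly the details that the citation leaves implicit.
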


Inequality \eqref{obs ineq} is called an \imp{observability inequality}.
This is the ``reversed'' inequality of \eqref{admi ineq}.
One powerful method to establish such inequalities is the \imp{compactness-uniqueness method} (see e.g. \cite{RT74,DR77,Rus78,Zua87,CT10}):

\begin{theorem}[\cite{DO18}]\label{thm DO18}
Assume that:
\begin{enumerate}
\item
$(\opA{M},B)$ satisfies the Fattorini-Hautus test:
\begin{equation}\label{FH test}
\ker(\lambda-\opA{M}^*) \cap \ker B^*=\ens{0}, \quad \forall \lambda \in \C.
\end{equation}

\item
There exist $T_0>0$, a compact operator $K:H \rightarrow E$ (for some Banach space $E$) and $C>0$ such that
\begin{equation}\label{obs with cpct rem}
\norm{z}_H^2 \leq C\left(\int_0^{T_0} \norm{B^*\opS{M}(t)^*z}_U^2 \, dt+\norm{K z}_E^2\right),
\end{equation}
for every $z \in D(\opA{}^*)$.

\end{enumerate}
Then, $(\opA{M},B)$ is \EC in any time $T>T_0$.
\end{theorem}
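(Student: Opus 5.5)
The plan is the classical compactness–uniqueness argument: remove the compact remainder $\norm{Kz}_E^2$ in \eqref{obs with cpct rem}, at the price of a slightly longer time $T>T_0$, and then conclude with the duality Theorem \ref{thm duality}. Fix $T>T_0$ and set $\delta=T-T_0>0$.

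\textbf{Step 1: the unobservable space.} Define
$$N_T=\ens{z \in H \st B^*\opS{M}(t)^*z=0 \text{ for a.e. } t\in(0,T)},$$
where $t\mapsto B^*\opS{M}(t)^*z$ is understood in $L^2(0,T;U)$. This extension to all of $H$ is legitimate by the admissibility inequality \eqref{admi ineq} and density of $D(\opA{}^*)$. By density and \eqref{admi ineq}, inequality \eqref{obs with cpct rem} also extends to all $z\in H$.

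\textbf{Step 2: $N_T$ is finite dimensional.} On $N_T$, inequality \eqref{obs with cpct rem} (valid since $T\ge T_0$) reduces to $\norm{z}_H\le C\norm{Kz}_E$. Hence any bounded sequence in the closed subspace $N_T$ has a convergent subsequence, by compactness of $K$. So the unit ball of $N_T$ is compact and $\dim N_T<\infty$.

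\textbf{Step 3: $N_T$ is invariant.} For $z\in N_T$ and $s\in(0,\delta)$, we have $\opS{M}(s)^*z\in N_{T_0}$ by the semigroup property. Introduce
$$\tilde N=\ens{z\in H\st B^*\opS{M}(t)^*z=0 \text{ on }(0,T_0+\delta/2)}.$$
It is finite dimensional by Step 2, contains $N_T$, and is mapped into itself by $\opS{M}(s)^*$ for $s<\delta/2$. More simply, the finite-dimensional space $N_{T'}$ with $T_0<T'<T$ satisfies
$$\opS{M}(s)^*N_T\subset N_{T-s}\subset N_{T'} \quad \text{for small } s.$$
The cleanest route is to show that $N_T\subset D(\opA{M}^*)$ and that $\opA{M}^*N_T\subset N_T$. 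Since $N_{T-s}$ is nondecreasing in $s$ and lies in a fixed finite-dimensional space, the family $N_{T-s}$ stabilizes for $s$ near some value. I would therefore replace $T$ by a slightly smaller time $T''\in(T_0,T)$ at which $N_{T''-s}=N_{T''}$ for small $s\ge0$; this is possible because the dimension is integer-valued and monotone in the time, so it is locally constant from one side. Then $N_{T''}$ is invariant under $\opS{M}(s)^*$ for small $s$. A finite-dimensional space invariant under a $C_0$-semigroup consists of elements of $D(\opA{M}^*)$, and it is invariant under $\opA{M}^*$, since the difference quotients $(\opS{M}(s)^*z-z)/s$ live in a closed finite-dimensional space and converge there.

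\textbf{Step 4: conclusion.} If $N_{T''}\ne\ens{0}$, the restriction of $\opA{M}^*$ to this finite-dimensional complex space has an eigenvector $z\neq0$. Since $z\in N_{T''}$ and $z\in D(\opA{}^*)$, evaluating at $t=0^+$ gives $B^*z=0$ (using continuity of $t\mapsto B^*\opS{M}(t)^*z=e^{\lambda t}B^*z$). This contradicts the Fattorini–Hautus test \eqref{FH test}. Hence $N_{T''}=\ens{0}$, and a fortiori $N_T=\ens{0}$.

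\textbf{Step 5: removing the compact term.} I argue by contradiction. Suppose there are $z_k$ with $\norm{z_k}_H=1$ and $\int_0^T\norm{B^*\opS{M}(t)^*z_k}^2dt\to0$. Extract a weak limit $z_k\rightharpoonup z$; then $Kz_k\to Kz$ strongly. Passing to the limit weakly in $L^2(0,T;U)$ shows $z\in N_T=\ens{0}$. Applying \eqref{obs with cpct rem} (with $T_0\le T$) then gives
$$1\le C\Bigl(o(1)+\norm{Kz_k}_E^2\Bigr)\to 0,$$
which is a contradiction. Therefore \eqref{obs ineq} holds in time $T$, and Theorem \ref{thm duality} yields exact controllability.

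The main obstacle is Step 3: the invariance and regularity of the unobservable space. The time margin $T>T_0$ is used exactly there, through the monotonicity/stabilization trick on $\dim N_{T-s}$.
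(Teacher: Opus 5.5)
The survey gives no proof of this theorem; it is quoted from \cite{DO18}. Your argument is the standard compactness--uniqueness proof and is correct in substance: $N_T$ is finite dimensional, the margin $T>T_0$ yields a subspace invariant under $\opA{M}^*$, the Fattorini--Hautus test \eqref{FH test} rules it out, and the usual weak-compactness contradiction removes $\norm{Kz}_E$. Your dimension-stabilization trick is a legitimate substitute for the more common route. There one shows $\opA{M}^*N_T\subset N_T$ directly, by bounding the difference quotients in the finite-dimensional space $N_{T-\epsilon}$ through equivalence of norms.

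Three points in Step 3 need repair.

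\emph{The claim about $\tilde N$ is false.} You assert that $\tilde N=N_{T_0+\delta/2}$ is mapped into itself by $\opS{M}(s)^*$ for $s<\delta/2$. This fails in general: $\opS{M}(s)^*$ only maps $N_{T'}$ into the larger space $N_{T'-s}$, which is exactly why the stabilization is needed. Delete that sentence, and also the ``more simply'' sentence, which does not give invariance either.

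\emph{The regularity argument is circular as written.} You justify that a finite-dimensional invariant subspace $F$ lies in $D(\opA{}^*)$ by saying the difference quotients converge. But convergence of the difference quotients is known only once $z\in D(\opA{}^*)$. Argue instead as follows. Suppose $\opS{M}(s)^*F\subset F$ for $s\in[0,\epsilon]$. For $z\in F$ and $0<h\le\epsilon$, the average $\frac{1}{h}\int_0^h\opS{M}(s)^*z\,ds$ lies in $F\cap D(\opA{}^*)$ and tends to $z$. Hence $F\cap D(\opA{}^*)$ is dense in the finite-dimensional space $F$, so it equals $F$. Then $\opA{M}^*z=\lim_{h\to 0^+}(\opS{M}(h)^*z-z)/h$ belongs to $F$, since $F$ is closed.

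\emph{The one-sided constancy must be from the left.} ``Locally constant from one side'' has to mean constant on a left neighbourhood. You can get this in either of two ways. One is to pick $T''$ away from the finitely many jumps of $s\mapsto\dim N_s$ in $(T_0,T)$. The other is to observe that $N_s=\bigcap_{s'<s}N_{s'}$. Then the decreasing family $(N_{s'})_{s'\uparrow s}$ of subspaces of $N_{T_0}$ stabilizes, so $N_{s'}=N_s$ for $s'$ slightly below any $s>T_0$. This second route even lets you take $T''=T$.
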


The Fattorini-Hautus test \eqref{FH test} is always a necessary condition for the \EC to hold in some time.
It is easy to check that is true for our system $(\opA{M},B)$, for any $M$.

Estimate \eqref{obs with cpct rem} is called an \imp{observability inequality with compact remainder}.
Now, given two systems $(\opA{M},B)$ and $(\opA{M_0},B)$, a standard procedure to try to prove such estimates is to start with the observability inequality satisfied by one system.
For instance, if $(\opA{M_0},B)$ is \EC in time $T_0$, then we can write
\begin{align*}
\norm{z}_H^2 &\leq C\int_0^{T_0} \norm{B^*\opS{M_0}(t)^*z}_U^2 \, dt
\\
& \leq 2C\left(
\int_0^{T_0} \norm{B^*\opS{M}(t)^*z}_U^2 \, dt
+\int_0^{T_0} \norm{B^*\opS{M_0}(t)^*z -B^*\opS{M}(t)^*z}_U^2 \, dt
\right).
\end{align*}
Thus, the operator
\begin{equation}\label{def op K}
Kz=B^*\opS{M_0}(\cdot)^*z -B^*\opS{M}(\cdot)^*z,
\end{equation}
is a natural candidate for inequality \eqref{obs with cpct rem}.
We can also start the same reasoning with the system $(\opA{M},B)$ instead.
A little subtlety is that the operator $K$ is a priori defined only on $D(\opA{}^*)$.
However, thanks to the admissibility inequality \eqref{admi ineq} it has a unique extension to $H$, that we shall still denote by $K$.
In summary, we have:

\begin{corollary}\label{cor DO18}
Assume that the operator $K:H \to L^2(0,T;U)$ defined by \eqref{def op K} is compact for all $T>0$.
Then, $\TinfEC(\opA{M},B)=\TinfEC(\opA{M_0},B)$.
\end{corollary}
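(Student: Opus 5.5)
We prove the two inequalities $\TinfEC(\opA{M},B) \leq \TinfEC(\opA{M_0},B)$ and $\TinfEC(\opA{M_0},B) \leq \TinfEC(\opA{M},B)$ separately. The argument is symmetric in $M$ and $M_0$, because the operator defined by \eqref{def op K} only changes sign when the two systems are swapped, so it stays compact. It therefore suffices to prove the first inequality. If $\TinfEC(\opA{M_0},B)=+\infty$ there is nothing to prove. So let $T_0>\TinfEC(\opA{M_0},B)$; by the monotonicity of controllability in time, $(\opA{M_0},B)$ is \EC in time $T_0$. We must show that $(\opA{M},B)$ is \EC in every time $T>T_0$. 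Letting $T_0$ decrease to $\TinfEC(\opA{M_0},B)$ then gives the claim.

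First, Theorem \ref{thm duality} applied to $(\opA{M_0},B)$ gives the observability inequality
$$\norm{z}_H^2 \leq C\int_0^{T_0} \norm{B^*\opS{M_0}(t)^*z}_U^2 \, dt, \qquad z \in D(\opA{}^*).$$
Next, the triangle inequality, exactly as in the computation preceding \eqref{def op K}, turns this into
$$\norm{z}_H^2 \leq 2C\left(\int_0^{T_0} \norm{B^*\opS{M}(t)^*z}_U^2\,dt+\norm{Kz}_{L^2(0,T_0;U)}^2\right),$$
with $K$ as in \eqref{def op K} taken on $(0,T_0)$. Both terms of $K$ are bounded from $D(\opA{}^*)$ (with the $H$-norm) into $L^2(0,T_0;U)$ by the admissibility inequality \eqref{admi ineq}, which holds for any coupling matrix, in particular for $M$ and $M_0$. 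Hence $K$ extends uniquely to a bounded operator on $H$, and this extension is compact by hypothesis (with $E=L^2(0,T_0;U)$). This is exactly \eqref{obs with cpct rem}.

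Finally, the Fattorini--Hautus test \eqref{FH test} holds for $(\opA{M},B)$, as noted after Theorem \ref{thm DO18}. If one prefers not to rely on that remark, one can check it directly: an eigenfunction $z$ with $B^*z=0$ satisfies $z(1)=0$ together with a linear ODE in $x$, so $z\equiv 0$ by Cauchy uniqueness. Theorem \ref{thm DO18} then yields that $(\opA{M},B)$ is \EC in any time $T>T_0$, as required.

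The only delicate point is the density/extension step. We must make sure that the compactness hypothesis concerns the extension of $K$ to $H$, and that inequality \eqref{obs with cpct rem}, stated on $D(\opA{}^*)$, is exactly what Theorem \ref{thm DO18} requires. Everything else is a direct application of results already stated.
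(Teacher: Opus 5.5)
Your proposal is correct and follows the paper's argument: observability of one system via Theorem \ref{thm duality}, the triangle inequality to get \eqref{obs with cpct rem} with the operator $K$ of \eqref{def op K} extended to $H$ through \eqref{admi ineq}, the Fattorini--Hautus test, Theorem \ref{thm DO18}, and the reverse inequality by swapping $M$ and $M_0$. Your direct check of \eqref{FH test}, using $z(1)=0$ and uniqueness for the linear ODE $\Lambda z'+(\Lambda'+M^\tr)z=\lambda z$, is a valid addition to what the paper only asserts.
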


It is in general not so easy to check that this $K$ is compact.
However, if the semigroup $(\opS{M_0}(t))_{t \geq 0}$ is explicitly known for some specific $M_0$, then some suitable criterion has been developed in \cite{HO21-JMPA}:

\begin{proposition}\label{prop crit K cpct}
Let $P=M-M_0$ and denote by the same the multiplication operator induced on $H$.
For $z \in H$ and $t \geq 0$, let
$$
(\Psi z)(t)=\int_0^t \opS{M_0}(t-s)^*P^*\opS{M_0}(s)^*z \, ds.
$$
Assume that there exists $\epsilon>0$ such that $\Psi:H \to L^2(0,\epsilon;H)$ and $B^*\Psi:H \to L^2(0,\epsilon;U)$ are compact.
Then, the operator $K:H \to L^2(0,T;U)$ defined by \eqref{def op K} is compact for all $T>0$.
\end{proposition}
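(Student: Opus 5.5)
The plan is to express $K$ through a Duhamel formula and bootstrap. Since $\opA{M}^*=\opA{M_0}^*+P^*$ with $P^*$ bounded on $H$, the adjoint semigroups satisfy the perturbation identity
$$
\opS{M}(t)^*z=\opS{M_0}(t)^*z+\int_0^t \opS{M_0}(t-s)^*P^*\opS{M}(s)^*z\,ds .
$$
Iterating it gives the Dyson--Phillips series
$$
\opS{M}(t)^*=\sum_{j\geq 0}\Psi_j(t),
\quad \Psi_0(t)=\opS{M_0}(t)^*,
\quad \Psi_{j+1}(t)=\int_0^t \opS{M_0}(t-s)^*P^*\Psi_j(s)\,ds .
$$
Here $\Psi_1=\Psi$, and the series converges in operator norm uniformly on $[0,T]$, because $\norm{\Psi_j(t)}\leq C^{j+1}\norm{P}^j t^j/j!$. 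So
$$
-Kz=B^*(\opS{M}(\cdot)^*-\opS{M_0}(\cdot)^*)z=B^*R(\cdot)z,
\quad R(t)=\int_0^t \opS{M}(t-s)^*P^*\opS{M_0}(s)^*z\,ds ,
$$
where the last expression comes from the Duhamel formula written the other way round. The task is to show that $z\mapsto B^*R(\cdot)z$ is compact from $H$ into $L^2(0,T;U)$.

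The first step is to reduce to the interval $(0,\epsilon)$. For $t\geq\epsilon$, use the semigroup property $\opS{M}(t)^*=\opS{M}(t-\tau)^*\opS{M}(\tau)^*$ and split, for $t\in(\epsilon,2\epsilon)$,
$$
\opS{M}(t)^*-\opS{M_0}(t)^*=\opS{M}(t-\epsilon)^*\bigl(\opS{M}(\epsilon)^*-\opS{M_0}(\epsilon)^*\bigr)+\bigl(\opS{M}(t-\epsilon)^*-\opS{M_0}(t-\epsilon)^*\bigr)\opS{M_0}(\epsilon)^* .
$$
Apply $B^*$ and use admissibility \eqref{admi ineq}: the map $w\mapsto B^*\opS{M}(\cdot)^*w$ is bounded from $H$ into $L^2(0,\epsilon;U)$. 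The first term is therefore compact as soon as $z\mapsto(\opS{M}(\epsilon)^*-\opS{M_0}(\epsilon)^*)z$ is compact in $H$. The second term is the small-time operator composed with a bounded operator. Induction on intervals of length $\epsilon$ then covers any $T$. This leaves two facts to prove: compactness of $K$ into $L^2(0,\epsilon;U)$, and compactness of the pointwise differences $\opS{M}(\tau)^*-\opS{M_0}(\tau)^*$ for $\tau\leq\epsilon$.

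The second step handles the series term by term on $(0,\epsilon)$.
\begin{itemize}
\item The term $j=1$ is $\Psi$, and both $\Psi$ and $B^*\Psi$ are compact by hypothesis.
\item For $j\geq2$, write $\Psi_j(t)z=\int_0^t \Psi_{j-1}(t-s)P^*\Psi_1(s)z\,ds$. This is a convolution of a uniformly bounded operator family with $\Psi z\in L^2(0,\epsilon;H)$, so it is compact into $L^2(0,\epsilon;H)$.
\item After applying $B^*$, pass to $L^2(0,\epsilon;U)$ by writing the convolution as a superposition $\int_0^\epsilon B^*\Psi_{j-1}(\cdot-s)\mathbf 1_{\cdot>s}P^*(\Psi z)(s)\,ds$ and using admissibility. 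This composes the bounded map $L^2(0,\epsilon;H)\to L^2(0,\epsilon;U)$, $g\mapsto B^*\int_0^{\cdot}\Psi_{j-1}(\cdot-s)g(s)\,ds$, with the compact $\Psi$. The map is bounded by admissibility and Young's inequality, using that $B^*\Psi_{j-1}$ is bounded $H\to L^2(0,\epsilon;U)$ (by induction, from admissibility of $(\opA{M_0},B)$).
\end{itemize}
The resulting norms decay like $1/(j-1)!$, so the series of compact operators converges in norm, and norm limits of compact operators are compact.

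The pointwise compactness of $\opS{M}(\tau)^*-\opS{M_0}(\tau)^*$ for fixed $\tau$ is the main obstacle, since the hypothesis only gives compactness of $\Psi$ in the time-integrated space $L^2(0,\epsilon;H)$. I would obtain it by a time-shift trick. Let $D(\tau)=\opS{M}(\tau)^*-\opS{M_0}(\tau)^*$. Average the identity $D(\tau)=D(\tau-s)\opS{M_0}(s)^*+\opS{M}(\tau-s)^*D(s)$ in $s\in(0,\tau)$:
$$
\tau D(\tau)=\int_0^\tau \opS{M}(\tau-s)^*D(s)\,ds+\int_0^\tau D(\tau-s)\opS{M_0}(s)^*\,ds .
$$
The first integral is the bounded map $g\mapsto\int_0^\tau\opS{M}(\tau-s)^*g(s)\,ds$ composed with the compact map $z\mapsto D(\cdot)z\in L^2(0,\epsilon;H)$, hence compact. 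The second should be treated the same way after rewriting it through the Duhamel formula in the form $D(r)=\int_0^r\opS{M}(r-\sigma)^*P^*\opS{M_0}(\sigma)^*\,d\sigma$. If this step resists, I would instead aim to show $\Psi(t)$ compact for a.e. $t$ directly, using strong measurability of $t\mapsto\Psi(t)$. Once pointwise compactness holds, the interval-splitting of the first step concludes the proof of compactness of $K$ on every $(0,T)$.
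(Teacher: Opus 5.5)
Your Step 2 is sound: summing the Dyson--Phillips series is a Neumann-series version of the paper's estimate $\norm{h}_{L^2(0,T;H)}\le C\norm{\Psi z}_{L^2(0,T;H)}$ combined with $Kz=B^*Vh+B^*\Psi z$. One index needs fixing: $\Psi_j(t)=\int_0^t\Psi_{j-2}(t-s)P^*\Psi_1(s)\,ds$, with $\Psi_0=\opS{M_0}(\cdot)^*$.

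The genuine gap is Step 1. Splitting at the fixed time $\epsilon$ requires $D(\epsilon)=\opS{M}(\epsilon)^*-\opS{M_0}(\epsilon)^*$ to be compact on $H$. The hypotheses only give compactness into $L^2(0,\epsilon;\cdot)$, and they do not provide this. Since $D(\tau)-\Psi(\tau)=\int_0^\tau\opS{M_0}(\tau-s)^*P^*D(s)\,ds$ is compact, what you need is exactly compactness of $\Psi(\tau)$ for some fixed $\tau$. That is the pointwise information of Remark~\ref{rem Lebesgue}, which the proposition does not assume. Neither of your two routes produces it.
\begin{itemize}
\item In the averaged identity, the second integral, rewritten through Duhamel, is
\[
\int_0^\tau D(\tau-s)\opS{M_0}(s)^*\,ds=\int_0^\tau r\,\opS{M}(\tau-r)^*P^*\opS{M_0}(r)^*\,dr .
\]
This is a weighted copy of $D(\tau)$ itself: the identity merely splits $\tau=r+(\tau-r)$, so the argument is circular.
\item Strong measurability does not turn $L^2$-compactness into pointwise compactness. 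Take $F(t)=\sum_k\mathbf{1}_{I_k}(t)\langle\cdot,e_k\rangle e_k$, with $(e_k)$ an orthonormal basis and $(I_k)$ a typewriter sequence of intervals ($\abs{I_k}\to0$, every $t$ in infinitely many $I_k$). Then $z\mapsto F(\cdot)z$ is compact into $L^2(0,\epsilon;H)$, but no $F(t)$ is compact.
\end{itemize}

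The missing idea is to average over the splitting point and to test compactness on weakly null sequences. On a Hilbert space this characterizes compactness, and no fixed-time compactness is ever used. For $t\in(\epsilon,\tfrac32\epsilon)$, let $s$ range over $(t-\epsilon,\epsilon)$; this interval has length at least $\epsilon/2$, and $s,t-s\in(0,\epsilon)$. Write $D(t)=\opS{M}(t-s)^*D(s)+D(t-s)\opS{M_0}(s)^*$, apply $B^*$, square, and integrate in $(s,t)$. For $z_n\rightharpoonup0$ this gives
\[
\frac{\epsilon}{2}\int_\epsilon^{\frac32\epsilon}\norm{(Kz_n)(t)}_U^2\,dt\le 2C\int_0^\epsilon\norm{D(s)z_n}_H^2\,ds+2\int_0^\epsilon\norm{K\opS{M_0}(s)^*z_n}_{L^2(0,\epsilon;U)}^2\,ds .
\]
\begin{itemize}
\item The first term tends to $0$ by admissibility and by the compactness of $z\mapsto D(\cdot)z$ into $L^2(0,\epsilon;H)$, which your Step 2 provides.
\item In the second term, the integrand tends to $0$ for each fixed $s$, because $\opS{M_0}(s)^*z_n\rightharpoonup0$ and $K$ is compact on $(0,\epsilon)$. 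It is also uniformly bounded, so dominated convergence applies.
\end{itemize}
The same computation without $B^*$ treats $D(\cdot)$, and replacing $\epsilon$ by $\tfrac32\epsilon$ repeatedly reaches any $T$.

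The paper organizes this the other way round. It first upgrades the hypothesis, namely the compactness of $\Psi$ and $B^*\Psi$, from $(0,\epsilon)$ to every $(0,T)$ by semigroup arguments. One way to do this is via $\Psi(t+s)=\opS{M_0}(t)^*\Psi(s)+\Psi(t)\opS{M_0}(s)^*$ with the same averaging. It then runs the Volterra estimate directly on $(0,T)$. There your Step 2 would work verbatim, and your Step 1 would become unnecessary.
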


We point out that the operator $B^*\Psi$ is a priori well defined only on $D(\opA{}^*)$.
However, we can show that (see e.g. \cite[Appendix A]{DO18}), for every $f \in C^1([0,T];H)$, we have $\int_0^t \opS{M_0}(t-s)^*f(s) \,ds \in D(\opA{}^*)$ for every $t \in [0,T]$, with
\begin{equation}\label{key estim}
\int_0^T \norm{B^* \int_0^t \opS{M_0}(t-s)^*f(s) \,ds}_{U}^2 \, dt
\leq C \norm{f}_{L^2(0,T;H)}^2,
\end{equation}
for some $C>0$ independent of $f$.
Therefore, the operator $B^*\Psi$ has a unique extension to $H$ and as usual we shall keep the same notation to denote this extension.

\begin{proof}[Proof of Proposition \ref{prop crit K cpct}]
The proof is inspired from the one of \cite[Lemma 3]{NRL86}, by taking care in addition of the operator $B^*$.
The starting point is the integral equation satisfied by semigroups of boundedly perturbed operators (see e.g. \cite[Corollary III.1.7]{EN00}):
$$
\opS{M}(t)^*z=
 \opS{M_0}(t)^*z
+\int_0^t \opS{M_0}(t-s)^*P^*\opS{M}(s)^*z \,ds,
$$
valid for every $t \geq 0$ and $z \in H$.
Thus, we see that $h(t)=\opS{M}(t)^*z-\opS{M_0}(t)^*z$ is the solution in $L^2(0,T;H)$ to the Volterra integral equation
\begin{equation}\label{Volt IE}
(\Id-V)h=\Psi z,
\end{equation}
where $(Vh)(t)=\int_0^t \opS{M_0}(t-s)^*P^* h(s) \,ds$.
It is well-known that $\Id-V$ is invertible (see for instance the proof of \cite[Lemma D.5]{HO21-JMPA}), so that
\begin{equation}\label{estim Volt}
\norm{h}_{L^2(0,T;H)} \leq C \norm{\Psi z}_{L^2(0,T;H)},
\end{equation}
for some $C>0$ independent of $z$.
Also, from equation \eqref{Volt IE} we get the identity
$$
Kz=B^*Vh +B^*\Psi z.
$$
It then follows from \eqref{key estim} and \eqref{estim Volt} that
$$
\norm{Kz}_{L^2(0,T;U)} \leq C' \left(
\norm{\Psi z}_{L^2(0,T;H)}
+\norm{B^*\Psi z}_{L^2(0,T;U)}
\right),
$$
for some $C'>0$ independent of $z$.
This shows that $K$ is compact if so are $\Psi$ and $B^*\Psi$.
Finally, thanks to basic semigroup properties, we can show that $\Psi:H \to L^2(0,T;H)$ is compact for all $T>0$ if it is compact for some $T>0$, and the same for $B^*\Psi$.
We refer for instance to the proof of \cite[Lemma 4.8]{HO21-JMPA} for details.

\end{proof}

\begin{remark}\label{rem Lebesgue}
Thanks to Lebesgue's dominated convergence theorem, the assumptions of Proposition \ref{prop crit K cpct} are satisfied if, for a.e. $t \in (0,\epsilon)$:
\begin{itemize}
\item
$\norm{(B^*\Psi z)(t)}_U \leq C \norm{z}_H$
for some $C>0$ independent of $t$ and $z$.

\item
$z \in H \mapsto (\Psi z)(t) \in H$ and $z \in H \mapsto (B^*\Psi z) (t) \in U$ are compact.
\end{itemize}
\end{remark}

\subsection{Compactness of some integral operators}\label{sect compact int op}

It remains to find the good candidate for $M_0$ in order to apply Proposition \ref{prop crit K cpct}.
Let $\varphi(t)=\opS{M_0}(t)^*z$.
It is the solution to the (forward) \imp{adjoint system}
$$
\begin{dcases}
\ddt \varphi(t)=\opA{M_0}^*\varphi(t), \quad t \in (0,T), \\
\varphi(0)=z.
\end{dcases}
$$
More explicitly,
\begin{equation}\label{forward adj syst}
\begin{dcases}
\pt{\varphi}(t,x)-\Lambda(x) \px{\varphi}(t,x)= \left(M_0(x)^\tr +\Lambda'(x)\right) \varphi(t,x), \\
\varphi_-(t,0)=\Qadj^\tr \varphi_+(t,0), \quad \varphi_+(t,1)=0, \\
\varphi(0,x)=z(x),
\end{dcases}
\quad (t,x) \in R_T,
\end{equation}
where we recall that $\Qadj$ is defined in \eqref{def Qadj}.
Therefore, the solution to \eqref{forward adj syst} is explicit and the simplest for
$$M_0=-\Lambda'.$$
Denoting by $\chir_\alpha$ the flow associated with $-\lambda_\alpha$, it is given by
$$
\varphi_i(t,x)=
\begin{dcases}
z_i(\chir_i(0;t,x)) & \text{ if } \chir_i(0;t,x) \in (0,1),
\\
0 & \text{ if } \chir_i(0;t,x) \not\in (0,1) \text{ and } i \geq m+1,
\\
(\Qadj^\tr)_i \varphi_+(\ssinr_i(t,x),0) & \text{ if } \chir_i(0;t,x) \not\in (0,1) \text{ and } i \leq m,
\end{dcases}
$$
where $(\Qadj^\tr)_i$ denotes the $i$-th row of $\Qadj^\tr$ and $\ssinr_i(t,x)$ is the solution to $\chir_i(\ssinr_i(t,x);t,x)=0$.

We can then deduce an explicit formula for $\Psi$ as well.
We are not going to detail all the tedious computations here (see \cite[Section 4.2]{HO21-JMPA}), we just want to show where the main assumption \eqref{hyp NRL86} is involved to prove the compactness.
We also refer to \cite{NRL86}.
Let $t$ be fixed.
Let us consider the case $i \geq m+1$ for simplicity.
Then, for a.e. $x \in (0,1)$, we have
\begin{align*}
(\Psi z)(t)_i(x) &=
\int_I  (P^*\opS{M_0}(s)^*z)_i(\xi(s)) \, ds
\\
&
=\sum_{j \geq m+1} \int_I  p_{ji}(\xi(s))
\varphi_j(s,\xi(s)) \, ds
+\sum_{j \leq m} \ldots
\\
&
=\sum_{j \geq m+1} \int_{\tilde{I}_j}  p_{ji}(\xi(s))
z_j(\tilde{\xi}_j(s)) \, ds
+ \ldots,
\end{align*}
where $\xi(s)=\chir_i(0;t-s,x)$, $I=\ens{s \in (0,t) \st \xi(s) \in (0,1)}$, $\tilde{\xi}_j(s)=\chir_j(0;s,\xi(s))$ and $\tilde{I}_j=\ens{s \in I \st \tilde{\xi}_j(s) \in (0,1)}$.
It is well-known that the above integral operators with $\tilde{\xi}_j(s)$ replaced by $s$ are compact on $L^2(0,1;\C)$.
Therefore, if we can do a change of variable, we will obtain a compact operator.
A direct computation shows that a uniform bound of the form
$$\abs{\tilde{\xi}_j'(s)} \geq C>0,$$
holds if, and only if, $\lambda_j(x) \neq \lambda_i(x)$ for all $x$.
In case $\lambda_j(x_0)=\lambda_i(x_0)$ for some $x_0$, we avoid possible problems if we have $p_{ji}(\xi)=0$ for a.e. $\xi \in (0,1)$.
This is exactly our main assumption \eqref{hyp NRL86}, but assuming in addition that the diagonal of $M$ is equal to $-\Lambda'$.
This restriction can however be removed since, up to changing $M$ to another internal coupling matrix $M^0$, the diagonal values can be made arbitrary without changing assumption \eqref{hyp NRL86}.
This is achieved with simple diagonal change of unknowns (the same that we will use in Section \ref{sect remove diag} below, but modifying appropriately the definition of $D$).

The compactness of $(B^*\Psi z)(t)$ can be proved similarly.
In conclusion, applying the abstract results of Section \ref{sect compact uniq}, we get
$$(\TinfEC(\opA{M},B)=) \, \TinfEC(\opA{M^0},B)=\TinfEC(\opA{-\Lambda'},B) \, (=\TinfEC(\opA{0},B)).$$
For more details we refer to \cite{HO21-JMPA}.

\section{Results for null controllability}

We have seen in the previous sections that the minimal control time is known for systems of conservation laws ($M=0$) and for the \EC of systems of balance laws ($M \neq 0$).
The situation is more intricate for the \NC of systems of balance laws.
This problem was first investigated in \cite{CN19} and followed by a series of works \cite{HO21-JMPA,CN21,HO21-COCV,CN21-pre,HO22-JDE,HO25-JDE,HO25-SICON}.
The goal of this part is to discuss some of these results.
All along this part, we will assume that
\begin{equation}\label{hyp strict}
\lambda_1<\ldots< \lambda_m<0<\lambda_{m+1}<\ldots< \lambda_{m+p}.
\end{equation}
In particular, we have again the order \eqref{order times}.

\subsection{The backstepping method}\label{sect BS}

In all the works mentioned above, the starting point in the investigation of the \NC of system \eqref{syst} is the \imp{backstepping method} for PDEs.
Popularized with the reference book \cite{KS08} (see also the more recent survey \cite{VABAK26}), this method was then developed specifically for hyperbolic systems in a series of works \cite{CVKB13,DMVK13,HDM15,HDMVK16,HVDMK19}.
Roughly speaking, it consists in using a Volterra integral transformation of the second kind to obtain an equivalent system (often called target system) for which the controllability properties are simpler to discuss.
Let us give some details.
The computations in this section are rather formal but they can be rigorously justified.

Below, we shall continue to use the notion of \imp{equivalent systems}, still denoted by $\sim$, in the same meaning as in Definition \ref{def syst equiv}.

\subsubsection{Removal of diagonal internal couplings}
\label{sect remove diag}

To use the backstepping method, we first need to remove the diagonal entries of $M$ (this will be clear from what follows).
This can easily be done by using the transformation
\begin{equation}\label{pre expo transfo}
y^*(t,x)=D(x)y(t,x),
\end{equation}
where $D(x)=\diag(d_1(x),\ldots,d_n(x))$ with $d_k(x)=e^{-\int_0^x \frac{m_{kk}(\xi)}{\lambda_k(\xi)} \,d\xi}$.
Indeed,
\begin{itemize}
\item
The spatial transformation \eqref{pre expo transfo} is invertible on the state space $L^2(0,1)^n$.

\item
$y^*$ and $y$ satisfy the same boundary condition at $x=0$ since $D(0)=\Id_n$.
Besides, if $y$ is a solution to the system $(\Lambda,Q,M)$, then
$$
\pt{y^*}(t,x)+\Lambda(x) \px{y^*}(t,x)=(D(x)M(x) +\Lambda(x)D'(x)) y(t,x).
$$
Thus, $y^*$ is a solution to the system $(\Lambda,Q,M^0)$, where
\begin{equation}\label{def M0}
M^0(x)=(D(x)M(x) +\Lambda(x)D'(x)) D(x)^{-1}.
\end{equation}
Conversely, we easily check that, if $y^*$ is a solution to $(\Lambda,Q,M^0)$, then $y$ defined by \eqref{pre expo transfo} is a solution to $(\Lambda,Q,M)$.

\end{itemize}

In other words, we have shown that
$$(\Lambda,Q,M) \sim (\Lambda,Q,M^0).$$

Note that $M^0$ has the same regularity as $M$ and that the diagonal entries of $M^0$ are all equal to zero by construction.
Note as well that $M^0=M$ if the diagonal entries of $M$ are already equal to zero.

\subsubsection{Volterra transformation}

The core of the backstepping method consists in using the Volterra integral transformation of the second kind
\begin{equation}\label{Volt transfo}
\tilde{y}(t,x)=y^*(t,x)-\int_0^x K(x,\xi)y^*(t,\xi) \, d\xi,
\end{equation}
where $K:\Tau \to \R^{n \times n}$ is a kernel to be determined, that is defined on the triangle $\Tau=\ens{(x,\xi) \st 0<\xi<x<1}$.

\begin{itemize}
\item
It is well-known that the spatial transformation \eqref{Volt transfo} is invertible on the state space $L^2(0,1)^n$ provided for instance that $K \in L^{\infty}(\Tau)^{n \times n}$ (see e.g. \cite[Chap. 2, Theorem 6]{Hoc73}).

\item
$\tilde{y}$ and $y^*$ clearly satisfy the same boundary condition at $x=0$.
Besides, if $y^*$ is a solution to the system $(\Lambda,Q,M^0)$, then, using the equation satisfied by $y^*$, integrating by parts, and using the boundary condition satisfied by $y^*$ at $x=0$, we obtain
\begin{align*}
\MoveEqLeft[2]
\pt{\tilde{y}}(t,x)+\Lambda(x) \px{\tilde{y}}(t,x)=
\\
&-\int_0^x \left(
\Lambda(x)\px{K}(x,\xi)
+\pxi{K}(x,\xi)\Lambda(\xi)
+K(x,\xi) (\Lambda'(\xi)+M^0(\xi))
\right)y^*(t,\xi) \, d\xi
\\
&+(M^0(x)+K(x,x)\Lambda(x)-\Lambda(x)K(x,x))y^*(t,x)
\\
&-K(x,0)\Lambda(0)\begin{pmatrix} \Id_m \\ Q \end{pmatrix} y^*_-(t,0).
\end{align*}
Therefore, if $K$ satisfies the following so-called kernel equations:
\begin{equation}\label{kern equ}
\begin{dcases}
\Lambda(x)\px{K}(x,\xi)
+\pxi{K}(x,\xi)\Lambda(\xi)
+K(x,\xi) (\Lambda'(\xi)+M^0(\xi))=0,
\\
\Lambda(x) K(x,x)-K(x,x)\Lambda(x) =M^0(x),
\end{dcases}
\quad (x,\xi) \in \Tau,
\end{equation}
then $\tilde{y}$ is the solution to
\begin{equation}\label{syst G}
\begin{dcases}
\pt{\tilde{y}}(t,x)+\Lambda(x) \px{\tilde{y}}(t,x)=G(x) \tilde{y}_-(t,0), \\
\tilde{y}_-(t,1)=\tilde{u}(t), \quad \tilde{y}_+(t,0)=Q\tilde{y}_-(t,0), \\
\tilde{y}(0,x)=\tilde{y}^0(x),
\end{dcases}
\end{equation}
for some initial data $\tilde{y}^0$ and function $\tilde{u}$, with $G:[0,1] \to \R^{n \times m}$ given by
\begin{equation}\label{def G}
G(x)
=
K(x,0)P, \quad P=-\Lambda(0)\begin{pmatrix} \Id_m \\ Q \end{pmatrix}.
\end{equation}

Conversely, we can check that, if $\tilde{y}$ is a solution to system \eqref{syst G}, then $y^*$ defined by \eqref{Volt transfo} is a solution to $(\Lambda,Q,M^0)$.

\end{itemize}

In other words, provided that a solution to the kernel equations \eqref{kern equ} actually exists, we have shown that
$$(\Lambda,Q,M^0) \sim (\Lambda,Q,G).$$
The notation $(\Lambda,Q,G)$ with ``$G$'' will be exclusively used to refer to system \eqref{syst G}, there will be no risk of confusion.

The main difficulty in the backstepping method is to prove the existence of a solution to the kernel equations.
Let us first point out that they have a solution only if the diagonal entries of $M^0$ are equal to zero (because of the condition at $(x,x)$ in \eqref{kern equ}).
This explains the need to change $M$ to $M^0$ by a preliminary transformation, as discussed before.
The standard approach to prove the existence of a solution, initiated in \cite{CVKB13} (see also \cite{VKC11}), consists in reformulating these equations as a system of integral equations and then proving existence (and uniqueness) of a solution to this system of integral equations by using the Banach fixed-point theorem and appropriate estimates.
To perform this reformulation, however, we need additional conditions for $K$, sometimes referred to as ``artificial conditions''.
For instance, for the diagonal entries of $K$ there is no condition other than the first equation in \eqref{kern equ}.
Usually, we consider an artificial condition at $(x,0)$ for these entries, since it is a point of the boundary of $\Tau$, and it also appears in the definition of $G$, which can be helpful to simplify its structure (see e.g. Section \ref{sect 1+1 Q not zero} below).
For $n=2$, which was the situation considered in \cite{CVKB13}, conditions at $(x,x)$ and $(x,0)$ are enough to carry out the aforementioned reformulation.
When $n \geq 3$, this is not enough though and we may need to add artificial conditions for some components of $K$, depending on the relation between the corresponding speeds in $x$ or $\xi$ direction.
This was first pointed out in \cite{HDM15} for $2+1$ systems.
Existence of a solution to the kernel equations for general $m+p$ systems was eventually proved in \cite{HDMVK16} (see also \cite{HVDMK19}).
Finally, let us also mention that a different approach was recently proposed in \cite{HO25-SICON}, by considering the condition at $(x,x)$ as an initial condition and finding the corresponding so-called domain of determinacy, much in the spirit of the reference books \cite{LY85,Bre00}.
This second approach is especially useful to prove the existence of regular solutions (see \cite[Remark A.5]{HO25-SICON}).

\begin{remark}
The kernel equations \eqref{kern equ} and the equivalent system \eqref{syst G} are not the only ones that can be obtained via a transformation of the form \eqref{Volt transfo}.
Other interesting choices are possible (see e.g. \cite{DMVK13,HDMVK16,ADM16}).
\end{remark}

\subsubsection{Control of the equivalent system}
\label{sect BS problems}

From the backstepping method described above we see that the study of the controllability properties of our initial system \eqref{syst} comes down to the study of the controllability properties of systems of the form \eqref{syst G}.
The gain here is the simpler coupling structure, thanks to the term $G(x) \tilde{y}_-(t,0)$, rather than $M(x)y(t,x)$.
Let us comment two approaches.

\paragraph{Direct approach.}
Let us write the block decomposition
\begin{equation}\label{G decomp}
G=
\begin{pmatrix}
G_{--} \\
G_{+-}
\end{pmatrix},
\quad \begin{array}{l}
G_{--} \in \R^{m \times m}, \\ G_{+-} \in \R^{p \times m}.
\end{array}
\end{equation}
Then, we see that $\tilde{u}$ directly controls the components of $\tilde{y}_-$:
$$
\begin{dcases}
\pt{\tilde{y}_-}(t,x)+\Lambda_-(x) \px{\tilde{y}_-}(t,x)=G_{--}(x)\tilde{y}_-(t,0), \\
\tilde{y}_-(t,1)=\tilde{u}(t), \\
\tilde{y}_-(0,x)=\tilde{y}^0_-(x),
\end{dcases}
$$
and indirectly controls the components of $\tilde{y}_+$ through $v(t)=\tilde{y}_-(t,0)$:
\begin{equation}\label{syst y+}
\begin{dcases}
\pt{\tilde{y}_+}(t,x)+\Lambda_+(x) \px{\tilde{y}_+}(t,x)=G_{+-}(x)v(t), \\
\tilde{y}_+(t,0)=Qv(t), \\
\tilde{y}_+(0,x)=\tilde{y}^0_+(x).
\end{dcases}
\end{equation}
The system is thus in what is often called ``cascade form''.
Note however, that the values of $v$ are not free to choose everywhere.
Indeed, by definition $\tilde{y}_-$ satisfies (we assume that the speeds are constant for simplicity)
\begin{equation}\label{explicit formula y-}
\tilde{y}_k(t,x)=
\begin{dcases}
\tilde{y}^0_k(-\lambda_k t+x)
+\int_0^t G_k(\lambda_k(s-t)+x) \tilde{y}_-(s,0) \, ds
& \mbox{ if } t+\frac{1-x}{\lambda_k}<0, \\
\tilde{u}_k\left(t+\frac{1-x}{\lambda_k}\right)
+\int_{t+\frac{1-x}{\lambda_k}}^t G_k(\lambda_k(s-t)+x) \tilde{y}_-(s,0) \, ds
& \mbox{ if } t+\frac{1-x}{\lambda_k}>0,
\end{dcases}
\end{equation}
for all $k \leq m$, where $G_k$ denotes the $k$-th row of $G$.
In particular, $v$ has to satisfy
\begin{equation}\label{cond vk on (0,Tk)}
v_k(t)=\tilde{y}^0_k(-\lambda_k t)
+\int_0^t G_k(\lambda_k(s-t)) v(s) \, ds, \quad t \in (0,T_k).
\end{equation}
After $T_k$, the values of $v_k$ are free to choose, because they relate to the control $\tilde{u}_k$.
Finally, for later purposes, we also write the explicit solution to system \eqref{syst y+}:
\begin{equation}\label{explicit formula y+}
\tilde{y}_{m+k}(t,x)=
\begin{dcases}
\tilde{y}^0_{m+k}(-\lambda_{m+k} t+x)
+\int_0^t G_{m+k}(\lambda_{m+k}(s-t)+x) v(s) \, ds
& \mbox{ if } t-\frac{x}{\lambda_{m+k}}<0, \\
Q_k v\left(t-\frac{x}{\lambda_{m+k}}\right)
+\int_{t-\frac{x}{\lambda_{m+k}}}^t G_{m+k}(\lambda_{m+k}(s-t)+x) v(s) \, ds
& \mbox{ if } t-\frac{x}{\lambda_{m+k}}>0,
\end{dcases}
\end{equation}
for all $1 \leq k \leq p$, where $Q_k$ denotes the $k$-th rows of $Q$.

\paragraph{Duality approach.}
The simplification in the coupling structure is also reflected at the level of its adjoint system, which is
$$
\begin{dcases}
\pt{\varphi}(t,x)+\Lambda(x) \px{\varphi}(t,x)=-\Lambda'(x) \varphi(t,x), \\
\varphi_-(t,0)=\Qadj^\tr \varphi_+(t,0) +\int_0^1 \Gadj(\xi)^\tr\varphi(t,\xi) \, d\xi, 
\quad \varphi_+(t,1)=0, \\
\varphi(T,x)=z(x),
\end{dcases}
\quad (t,x) \in R_T,
$$
where we recall that $\Qadj$ is defined in \eqref{def Qadj} and $\Gadj(\xi)=-G(\xi)\Lambda_-(0)^{-1}$ (recall \eqref{decomp Lambda}).
Observe that the equations of the adjoint system are completely decoupled, it only consists in transport equations.
The system is only coupled at the boundary $x=0$.

\paragraph{Questions.}
Now, two problems naturally arise:
\begin{enumerate}[(I)]
\item\label{prob 1}
Can we characterize the minimal time for the \NC of systems of the form \eqref{syst G} in function of $Q$ and $G$ ?

\item\label{prob 2}
If so, can we deduce the minimal time for the \NC of system \eqref{syst} in function of $Q$ and $M$, when $G$ takes the particular form \eqref{def G} with \eqref{kern equ} ?
\end{enumerate}

\begin{remark}
The choice of solution to the kernel equations does not affect the controllability properties because all the corresponding systems \eqref{syst G}-\eqref{def G} are equivalent.
\end{remark}

In the next sections we present answers to the above problems in some situations.

\subsection{The set and the control time of Coron and Nguyen}
\label{sect CN19}

In \cite{CN19}, the authors introduced a class of boundary coupling matrices and proved the \NC for generic internal coupling matrices of the corresponding system in some time $\TCN$, in general smaller than the control time $T_m+T_{m+1}$ presented in Section \ref{sect LR}.
The goal of this part is to present this result.
To this end, we introduce the following classes of boundary coupling matrices:
\begin{equation}\label{def setCN}
\setCN_h=
\ens{Q \in \R^{p \times m} \st
\begin{pmatrix}
q_{11} & \cdots & q_{1k} \\
\vdots &  & \vdots \\
q_{k1} & \cdots & q_{kk}
\end{pmatrix}
\text{ is invertible for all $k \leq h$}},
\end{equation}
for $1 \leq h \leq \rhomax=\min\ens{m,p}$.
Note that $\setCN_{\rhomax} \subset \ldots \subset \setCN_1$.
We also set $\setCN_0=\R^{p \times m}$.
An important feature of the sets $\setCN_h$ is that they are stable by small perturbations.
Note as well that $r_k=c_k=k$ for all $k \leq h$ if $Q \in \setCN_h$ (recall Definition \ref{def rkck can form}).

\begin{theorem}[\cite{CN19}]\label{thm CN19}
Assume \eqref{hyp strict}.
Let $\rhoCN=\min\ens{m-1,p}$ ($\in \ens{0,\ldots,\rhomax}$).
For every $Q \in \setCN_{\rhoCN}$, and every $C \in L^{\infty}(0,1)^{n \times n}$, there exists a discrete set $E \subset \R$ such that, for every $\gamma \not\in E$, system \eqref{syst} with $M=\gamma C$ is \NC in time $T=\TCN$, where
$$
\TCN=
\max\ens{
T_{m+1}+T_1, \ldots, T_{m+\rhomax}+T_{\rhomax}, \quad
T_m
}.
$$
\end{theorem}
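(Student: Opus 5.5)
The approach is to reduce, via the backstepping transformation of Section \ref{sect BS}, to the equivalent system $(\Lambda,Q,G)$ of \eqref{syst G}. First apply the diagonal transformation \eqref{pre expo transfo} to pass from $M=\gamma C$ to $M^0$, which has zero diagonal. Then take a kernel $K$ solving \eqref{kern equ}, given by the existence results of \cite{HDMVK16}, to get $(\Lambda,Q,\gamma C)\sim(\Lambda,Q,G)$. Null controllability of \eqref{syst G} in time $\TCN$ is then a statement about the cascade \eqref{syst y+}. Since $\tilde y_-$ is free after the times $T_k$ through $\tilde u$, it suffices to construct $v(t)=\tilde y_-(t,0)$ on $(0,\TCN)$ with three properties:
\begin{itemize}
\item $v$ satisfies the compatibility constraints \eqref{cond vk on (0,Tk)} on $(0,T_k)$;
\item with this $v$, the explicit formula \eqref{explicit formula y+} gives $\tilde y_+(\TCN,\cdot)=0$;
\item the formula \eqref{explicit formula y-} gives $\tilde y_-(\TCN,\cdot)=0$.
\end{itemize}
The last property only asks that $v=0$ on $(\TCN-T_k,\TCN)$ for the free part, together with a choice of $\tilde u_k$, which is possible because $\TCN\ge T_m\ge T_k$.

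Next I would write the condition $\tilde y_{m+k}(\TCN,x)=0$ for $x\in(0,1)$. By \eqref{explicit formula y+} it reads
\[
Q_k v(s)+\int_s^{\TCN}(\text{kernel})\,v=0 \quad\text{for } s\in(\TCN-T_{m+k},\TCN),
\]
plus an initial-data term. Using $Q\in\setCN_{\rhoCN}$, I would solve these $p$ conditions together with the $m$ constraints \eqref{cond vk on (0,Tk)} row by row. On each time interval, the leading $k\times k$ block of $Q$ is invertible, so a triangular elimination lets us express $v_1,\dots,v_k$ in terms of $v_{k+1},\dots,v_m$ and integral terms. The time $T_{m+k}+T_k\le\TCN$ is exactly what guarantees that the interval where $v_k$ is forced by the initial data, $(0,T_k)$, does not overlap the interval where it must cancel $\tilde y_{m+k}$, $(\TCN-T_{m+k},\TCN)$. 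This is the same gluing as in the Li--Rao proof of Theorem \ref{thm LR EC}. The hypothesis $\rhoCN=\min\{m-1,p\}$ leaves at least one free component, $v_m$, which absorbs the remaining constraints. The outcome is a linear system of the form
\[
(\Id-\mathcal V_\gamma-\mathcal F_\gamma)v=\text{data},
\]
where $\mathcal V_\gamma$ is a Volterra part and $\mathcal F_\gamma$ is a Fredholm, compact part, coming from the integral terms whose limits are not of Volterra type. Both depend on $\gamma$, and $\gamma=0$ gives the conservation-law case.

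The main obstacle is solvability of this Fredholm equation. Here I would use analyticity in $\gamma$. The kernel $K$, and hence $G$, depends analytically (indeed polynomially in each Picard iterate) on $\gamma$, because the fixed-point construction for \eqref{kern equ} converges for all $\gamma$ uniformly on compacts. So $\gamma\mapsto\mathcal F_\gamma$ is an analytic family of compact operators, and $\Id-\mathcal V_\gamma-\mathcal F_\gamma$ is Fredholm of index zero after inverting the Volterra part. At $\gamma=0$ the operator is invertible, since then $G=0$ and Remark \ref{rem Tinf can syst} together with Theorem \ref{main thm syst cons laws} apply. The analytic Fredholm theorem then gives invertibility for all $\gamma$ outside a discrete set $E$. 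I expect the delicate points to be the bookkeeping that keeps the integral operators Volterra except for a genuinely compact remainder, and the check that invertibility at $\gamma=0$ really holds in this reformulation. If $\gamma=0$ failed there, I would instead show invertibility for $\gamma$ small by a Neumann series.
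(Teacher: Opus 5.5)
Your overall route is the paper's: backstepping to $(\Lambda,Q,G)$, integral equations for $v=\tilde y_-(\cdot,0)$, triangular elimination via $Q\in\setCN_{\rhoCN}$, and the analytic Fredholm theorem in $\gamma\in\C$. However, two steps of your bookkeeping fail as written.

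\textbf{The constraint $v=0$ on $(\TCN-T_k,\TCN)$.} The condition $\tilde y_-(\TCN,\cdot)=0$ imposes no condition on $v$ at all. Once $v$ is built, solve the backward problem for $\tilde y_-$ with boundary data $v$ at $x=0$ and zero state at $t=\TCN$. The initial condition $\tilde y_-(0,\cdot)=\tilde y_-^0$ then follows from \eqref{cond vk on (0,Tk)}, and you take $\tilde u=\tilde y_-(\cdot,1)$. Your extra requirement $v_k=0$ on $(\TCN-T_k,\TCN)$ is not needed, and in general it contradicts \eqref{cond vk on (0,Tk)}. Indeed, $(0,T_k)$ and $(\TCN-T_k,\TCN)$ overlap whenever $\TCN<2T_k$, so $\TCN\ge T_k$ does not give disjointness. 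For instance, $m=2$, $p=1$, $T_1=0.9$, $T_2=1$, $T_3=0.5$ gives $\TCN=1.4$.

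\textbf{The rows $j\ge m$ when $m\le p$.} This is the case the paper writes out, and there $\rhoCN=m-1$, so row $m$ of $Q$ is not available for elimination. Rows $m,\ldots,p$ then produce up to $p-m+1$ equations on $(\TCN-T_{m+j},\TCN)$, which a single component $v_m$ cannot ``absorb''. For example, with $m=p=2$ and $Q=\bigl(\begin{smallmatrix}1&0\\1&0\end{smallmatrix}\bigr)\in\setCN_1$, $v_2$ does not even enter the boundary terms. The missing device is to impose $v\equiv0$, in all components, on $(\TCN-T_{2m},\TCN)$. This works for the following reasons:
\begin{itemize}
\item It is admissible because these equations are homogeneous. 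Contrary to what you wrote, there is no initial-data term, since $\TCN\ge T_{m+1}\ge T_{m+j}$.
\item It satisfies every row $j\ge m$ trivially, because $T_{m+j}\le T_{2m}$.
\item Each row $k<m$ then only needs to be solved on $(\TCN-T_{m+k},\TCN-T_{2m})$, with $v_k=\psi_k$ left free on $(T_k,\TCN-T_{m+k})$ to make the system square.
\item Its compatibility with $(0,T_k)$ is exactly the condition $\TCN\ge T_{2m}+T_m$.
\end{itemize}
That last point, and not $v_m$ cancelling $\tilde y_{2m}$, is where the $k=m$ term of $\TCN$ comes from.

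\textbf{Invertibility at $\gamma=0$.} Your justification is a non sequitur: null controllability of the conservation-law system gives \emph{some} control, not unique solvability of your particular equation. The correct reason is elementary. At $\gamma=0$ one has $G=0$, so every integral term vanishes. What remains is the purely algebraic operator ($J$ in the paper), which is invertible by the triangular argument based on $Q\in\setCN_{\rhoCN}$; in your post-elimination form it is literally $\Id$. Your Neumann-series fallback rests on this same invertibility, so it is not an alternative; it is exactly the paper's argument that $J+K(\gamma)$ is invertible for small $\gamma$.

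\textbf{Minor points.} The Volterra/Fredholm split is unnecessary: all these integral operators have bounded kernels on bounded intervals, so they are compact. What you must actually check is that $\gamma\mapsto K(\gamma)$ is analytic (the paper cites \cite[Lemma 3.3]{CN19}). The set $E$ is then the real trace of the discrete exceptional set in $\C$.
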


We will present a slightly different proof than the original one of \cite{CN19}.
In particular, $G_{--}$ is not necessarily strictly lower triangular and the space $H$ that will be introduced below is the Cartesian product of $L^2$ spaces on the same time interval.
However, the key point to conclude remains the same.

\begin{proof}
We only prove the result in the case $m\leq p$ for the sake of presentation, the other case can be treated similarly.
Thus, we assume that $Q \in \setCN_{m-1}$ and we are going to show that the system is \NC in time
$$\TCN=\max_{1 \leq k \leq m} T_{m+k} +T_k,$$
for generic $M$.
We also assume that the speeds are constant for simplicity.
We recall that, from the results of Section \ref{sect BS}, it is equivalent to show the \NC of the equivalent system \eqref{syst G}.
We continue to use the notations of this section here.

\begin{enumerate}
\item
We first find conditions on $v=\tilde{y}_-(\cdot,0)$ such that, in addition to \eqref{cond vk on (0,Tk)}, the corresponding solution $\tilde{y}_+$ to system \eqref{syst y+} satisfies $\tilde{y}_+(\TCN,\cdot)=0$.
Once such a $v$ has been constructed, we simply solve the backward system
$$
\begin{dcases}
\pt{\tilde{y}_-}(t,x)+\Lambda_- \px{\tilde{y}_-}(t,x)=G_{--}(x)v(t), \\
\tilde{y}_-(t,0)=v(t), \\
\tilde{y}_-(\TCN,x)=0,
\end{dcases}
$$
and take as final control the trace $\tilde{u}(t)=\tilde{y}_-(t,1)$.
Note that $\tilde{y}_-(0,x)=\tilde{y}^0_-(x)$ by definition of $v_k$ on $(0,T_k)$ (see \eqref{cond vk on (0,Tk)}).

\item
Let $1 \leq j \leq p$ be arbitrary.
Since $\TCN \geq T_{m+j}$ (recall \eqref{order times}), the condition $\tilde{y}_{m+j}(\TCN,\cdot)=0$ is equivalent to (see \eqref{explicit formula y+} and do $t=\TCN-\frac{x}{\lambda_{m+j}}$)
\begin{equation}\label{cond vk on (T-Tmk,T-T2m)}
Q_j v(t) + \int_t^{\TCN} G_{m+j}(\lambda_{m+j}(s-t)) v(s) \, ds =0, \quad t \in (\TCN-T_{m+j},\TCN).
\end{equation}

\item
We will construct a solution $v$ satisfying the system of integral equations \eqref{cond vk on (T-Tmk,T-T2m)} for all $j$, by choosing the values of $v_k$ only in $(\TCN-T_{m+k},\TCN)$.
In the missing interval $(T_k,\TCN-T_{m+k})$ (recall condition \eqref{cond vk on (0,Tk)}), we will fix arbitrary values for $v_k$:
\begin{equation}\label{cond vk on (Tk,T-Tmk)}
v_k=\psi_k \text{ in } (T_k,\TCN-T_{m+k}),
\end{equation}
where $\psi_k$ is any given function of $L^2(T_k,\TCN-T_{m+k})$.

\item
For $j \geq m$, we solve the corresponding equations \eqref{cond vk on (T-Tmk,T-T2m)} by simply taking
\begin{equation}\label{cond vk on (T-T2m,T)}
v=0 \text{ in } (\TCN-T_{2m},\TCN).
\end{equation}
This is compatible with \eqref{cond vk on (0,Tk)} since $(0,T_k)$ and $(\TCN-T_{2m},\TCN)$ are disjoint intervals for all $k \leq m$.

\item
Gathering \eqref{cond vk on (0,Tk)}, \eqref{cond vk on (Tk,T-Tmk)}, \eqref{cond vk on (T-Tmk,T-T2m)} and \eqref{cond vk on (T-T2m,T)}, we thus want to find a solution $v$ in $H=L^2(0,\TCN)^m$ to
$$(J+K)v=f,$$
where the operator $J:H \to H$ is
$$
(Jv)_k(t)=
\begin{dcases}
v_k(t) & \text{ if } t \in (0,\TCN-T_{m+k}), \\
Q_kv(t) & \text{ if } t \in (\TCN-T_{m+k}, \TCN-T_{2m}), \\
v_k(t) & \text{ if } t \in (\TCN-T_{2m}, \TCN),
\end{dcases}
$$
the operator $K:H \to H$ is
$$
(Kv)_k(t)=
\begin{dcases}
-\int_0^t G_k(\lambda_k(s-t)) v(s) \, ds & \text{ if } t \in (0,T_k), \\
0 & \text{ if } t \in (T_k, \TCN-T_{m+k}), \\
\int_t^{\TCN} G_{m+k}(\lambda_k(s-t)) v(s) \, ds & \text{ if } t \in (\TCN-T_{m+k}, \TCN-T_{2m}), \\
0 & \text{ if } t \in (\TCN-T_{2m},\TCN),
\end{dcases}
$$
and $f \in H$ is
$$
f_k(t)=
\begin{dcases}
\tilde{y}^0_k(-\lambda_k t) & \text{ if } t \in (0,T_k), \\
\psi_k(t) & \text{ if } t \in (T_k, \TCN -T_{m+k}), \\
0 & \text{ if } t \in (\TCN -T_{m+k}, \TCN). \\
\end{dcases}
$$
\item
We first note that $J$ is invertible thanks to the assumption $Q \in \setCN_{m-1}$.
Indeed, let $h \in H$ be arbitrary.
We first solve $Jv=h$ in $(\TCN-T_{2m},\TCN)$.
This immediately gives $v=h$ therein.
The condition $(Jv)_m=h_m$ also gives that $v_m=h_m$ in $(0,\TCN-T_{2m})$ and thus $v_m=h_m$ everywhere.
Using the assumption $Q \in \setCN_{m-1}$ we can iterate this procedure to successively uniquely determine the values of $v$ in $(\TCN-T_{m+k},\TCN-T_{m+k+1})$ for $k=m-1,m-2,\ldots,1$.
Finally, it is clear that $Jv=h$ is equivalent to $v=h$ in $(0,\TCN-T_{m+1})$.

\item
Because of the simultaneous presence of integrals of the form $\int_0^t$ and $\int_t^{\TCN}$, the system $(J+K)v=f$ is a system of Fredholm integral equations.
The idea of \cite{CN19} is to solve it using the ``analytic Fredholm theorem''.
We now assume that $M$ is of the form $M=\gamma C$, with $\gamma \in \C$ and $C \in L^{\infty}(0,1)^{n \times n}$.
Let us denote by $K(\gamma)$ the corresponding operator $K$.
Note on the other hand that $J$ remains independent of $\gamma$.
It is well-known that $K(\gamma)$ is a compact operator.
It is not difficult to show that $K(\gamma)$ is small for small $\gamma$ so that $J+K(\gamma)$ is invertible for such $\gamma$.
Finally, it is shown in \cite[Lemma 3.3]{CN19} that $\gamma \in \C \mapsto K(\gamma) \in \lin{H}$ is analytic.
Then, by the analytic Fredholm theorem \cite[Theorem 8.92]{RR04}, there exists a discrete set $S \subset \C$ such that, for every $\gamma \not\in S$, the equation $(J+K(\gamma))v=f$ has a unique solution $v$.
This concludes the proof of the theorem.

\end{enumerate}

\end{proof}

\begin{remark}
It is written in \cite{CN19} that the control time $\TCN$ is ``optimal''.
However, similar to what we wrote in Remark \ref{rem Li sharp}, let us warn the reader that this does not mean that $\TCN$ is the minimal control time for $Q \in \setCN_{\rhoCN}$.
What is meant in \cite{CN19} is that, ``in general'', we cannot do better than $\TCN$ if $Q \in \setCN_{\rhoCN}$, where ``in general'' means ``for some $Q \in \setCN_{\rhoCN}$ and some $M$''.
\end{remark}

The assumption that $M$ has to be of the form $M=\gamma C$, where $\gamma$ is not really explicit, was later removed in \cite{CN21}, assuming the strict inequality $T>\TCN$.
We do not present the proof here because it is anyway contained in the results we should present below (see in particular Remark \ref{rem max Tinf for Q in setCN}).

\subsection{The minimal control time for $1+1$ systems}\label{sect Tinf 1+1}

One situation where the control problem has been completely solved is the case of $1+1$ systems (i.e. $m=p=1$).
In that case, $Q \in \R$ is just a scalar.
For $Q \neq 0$, the minimal control time can easily be deduced from the results in \cite{CVKB13}.
The case $Q=0$ was treated in \cite{HO21-COCV}.
The goal of this section is to discuss these two results.

\subsubsection{With boundary coupling}
\label{sect 1+1 Q not zero}

We start with the case $Q \neq 0$.
Let us first point out that this means that $Q$ is full row rank, and thus the minimal control time is $T_1+T_2$ by Corollary \ref{cor main thm EC}.
Let us however mention a simpler proof.
As mentioned in Section \ref{sect BS}, there are some freedom for the choice of the solution to the kernel equations \eqref{kern equ}.
As shown in \cite{CVKB13}, there exists a solution $K=(k_{\alpha\beta})_{1 \leq \alpha,\beta \leq 2}$ such that
\begin{equation}\label{BC k11}
k_{11}(x,0)=-\frac{1}{\lambda_1(0)} k_{12}(x,0) \lambda_2(0) Q, \quad x \in (0,1),
\end{equation}
and, since $Q \neq 0$, such that
$$k_{22}(x,0)=-\frac{1}{\lambda_2(0) Q} k_{21}(x,0) \lambda_1(0), \quad x \in (0,1).$$
These conditions respectively mean that $g_{11}=0$ and $g_{21}=0$ (see \eqref{def G}).
Thus, $G=0$ in the equivalent system \eqref{syst G}.
Clearly, the minimal control time of this system is $T_1+T_2$ for $Q \neq 0$.
Thus, so is the minimal control time of the initial system \eqref{syst}.

\subsubsection{Without boundary coupling}

Let us now consider the case $Q=0$.
This situation is more delicate and it needs the introduction of the following quantity:

\begin{definition}\label{def xf}
For $a>0$ and a function $f:(0,a) \longrightarrow \R$, we denote by $\Xm{a}{f} \in [0,a]$ the length of the largest interval of the form $(0,\ell)$ with $\ell \leq a$ where the function $f$ vanishes a.e.
\end{definition}

\begin{example}\label{ex length}
The function $f(x)=e^{-1/x}$ satisfies $\Xm{a}{f}=0$ for all $a$.
\end{example}

Then, the result is:

\begin{theorem}[\cite{HO21-COCV}]\label{main thm 1+1}
Assume that $m=p=1$ and $Q=0$.
Then, system \eqref{syst} is \NC in time $T$ if, and only if,
\begin{equation}\label{min time syst}
T \geq \max\ens{T_2, \quad \int_{\Xm{\xmax}{m_{21}}}^1 \left(\frac{1}{\abs{\lambda_1(\xi)}}+\frac{1}{\lambda_2(\xi)}\right) \, d\xi},
\end{equation}
where $\xmax \in (0,1)$ is the unique solution to $\int_0^{\xmax} \left( \frac{1}{\abs{\lambda_1(\xi)}}+ \frac{1}{\lambda_2(\xi)}\right) \, d\xi=T_2$.
\end{theorem}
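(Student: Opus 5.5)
We will work with the equivalent system \eqref{syst G} given by the backstepping method of Section \ref{sect BS}. With $m=p=1$ and $Q=0$ this equivalence reads $(\Lambda,0,M)\sim(\Lambda,0,M^0)\sim(\Lambda,0,G)$. After the diagonal removal of Section \ref{sect remove diag} we have $m^0_{21}=d_2 m_{21}/d_1$ with $d_1,d_2>0$, so $\Xm{a}{m^0_{21}}=\Xm{a}{m_{21}}$ for all $a$. As in Section \ref{sect 1+1 Q not zero}, the artificial condition \eqref{BC k11} lets us choose the kernel so that $g_{11}=0$. Since $Q=0$, we cannot also kill $g_{21}$. The target system is therefore the cascade
\begin{align*}
&\partial_t\tilde y_1+\lambda_1\partial_x\tilde y_1=0, \qquad \tilde y_1(t,1)=\tilde u(t),\\
&\partial_t\tilde y_2+\lambda_2\partial_x\tilde y_2=g(x)\tilde y_1(t,0), \qquad \tilde y_2(t,0)=0,
\end{align*}
with $g=g_{21}=-k_{21}(\cdot,0)\lambda_1(0)$.

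The first key step is to relate $\Xm{\cdot}{g}$ to $\Xm{\cdot}{m_{21}}$, after rescaling along characteristics. The kernel equations \eqref{kern equ} for $k_{21}$ form a transport equation on $\Tau$ with boundary datum $k_{21}(x,x)=m^0_{21}(x)/(\lambda_2(x)-\lambda_1(x))$ on the diagonal, coupled to the other entries. Following the characteristic of $k_{21}$ from $(x,0)$ back to the diagonal hits a point $\phi(x)$, where $\phi$ is defined by $\int_0^{\phi(x)}(1/|\lambda_1|+1/\lambda_2)=\int_0^x 1/\lambda_2$. We want to show that $g$ vanishes on $(0,\ell)$ exactly when $m_{21}$ vanishes on $(0,\phi(\ell))$. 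Vanishing is propagated by a Gronwall or uniqueness argument on the subtriangle: if $m^0_{21}=0$ near $0$, then the whole kernel $K$ is upper/diagonal there. The reverse direction needs care, because $g$ vanishing on $(0,\ell)$ must force $m_{21}=0$ on the image interval. I would use the uniqueness for the Goursat-type problem, run backwards from the data on $\xi=0$, in the domain of determinacy; this is the approach of \cite{HO25-SICON}. This step is the main obstacle.

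Next we analyse null controllability of the cascade with $v=\tilde y_1(\cdot,0)$. By \eqref{cond vk on (0,Tk)} with $G_{--}=0$, the function $v$ is prescribed on $(0,T_1)$ by $\tilde y_1^0$ and is free afterwards. Condition \eqref{cond vk on (T-Tmk,T-T2m)} becomes $\int_t^T g(\lambda_2(s-t))v(s)\,ds=0$ for $t\in(T-T_2,T)$, and $T\ge T_2$ is clearly necessary. Choosing $v=0$ after $T_1$, the contributions of $v$ on $(0,T_1)$ only enter through the values of $g$ at distance $\lambda_2(s-t)$. These all lie in the vanishing interval $(0,\ell)$, with $\ell=\Xm{\cdot}{g}$, precisely when $T\ge T_1+T_2-\ell/\lambda_2$. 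Translating $\ell$ to $\Xm{\xmax}{m_{21}}$ via $\phi$ gives the integral in \eqref{min time syst}. Here $\xmax=\phi(1)$ reflects the fact that only $g$ on $(0,1)$ matters, which explains the cap by $\xmax$.

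For necessity, suppose $T$ is smaller than the bound. We pick initial data $\tilde y^0_1$ supported where $v$ on $(0,T_1)$ interacts with $g$ at points just past $\ell$, where $g$ is not a.e. zero. The Volterra-type constraint with nonvanishing kernel then forces $v=0$ on a set where it is prescribed, and a Titchmarsh-type argument (convolution support theorem) contradicts $\tilde y^0_1\neq0$. For sufficiency, the explicit control $v=0$ after $T_1$ works in the constant-speed model; the variable-speed case follows by the same computation along characteristics.
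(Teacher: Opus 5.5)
Your proposal is correct and follows essentially the paper's route. The steps are the same: backstepping to the cascade with $g_{11}=0$, sufficiency with $\tilde u=0$, necessity from $T\ge T_2$ plus the Titchmarsh convolution theorem, and the identity $\phi(\Xm{1}{g_{21}})=\Xm{\xmax}{m_{21}}$ obtained by following the $k_{21}$-characteristic from $(x,0)$ back to the diagonal; you only reorder them.

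One small point concerns the converse direction of that identity. Backward uniqueness from zero data on $\xi=0$ gives $k_{21}\equiv 0$ only if you also fix the artificial condition $k_{22}(\cdot,0)=0$. For a general choice, write $k_{21}(x,0)$ along the characteristic as a bounded-below multiple of $m^0_{21}(\phi(x))$ plus $\int_0^{\phi(x)}\kappa(x,\xi)\,m^0_{21}(\xi)\,d\xi$, with $\kappa$ bounded, and conclude by Gronwall. This is essentially the paper's computation.
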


\begin{example}
If $m_{21}(x)=e^{-1/x}$, then the minimal control time is $T_1+T_2$.
This also shows that no control does a better job than $u=0$ in that case.
\end{example}

\begin{remark}\label{rem cst speeds}
From formula \eqref{min time syst} we see that the minimal control time can be any time between $\max\ens{T_1,T_2}$ and $T_1+T_2$, depending on the values of $m_{21}$.
\end{remark}

As we shall see below, the difficult part of the result is the necessary condition.

Following the plan laid out in Section \ref{sect BS problems}, we first investigate Problem \ref{prob 1}.
As mentioned before, we can always find a solution to the kernel equations \eqref{kern equ} satisfying in addition condition \eqref{BC k11} so that $g_{11}=0$.
Thus, there is only one coupling remaining, namely $g_{21}$.
We have:

\begin{theorem}[\cite{HO21-COCV}]\label{thm can syst}
Assume that $m=p=1$ and $Q=g_{11}=0$.
Then, system \eqref{syst G} is \NC in time $T$ if, and only if,
\begin{equation}\label{min time can syst}
T \geq \max\ens{T_1+\int_{\Xm{1}{g_{21}}}^1 \frac{1}{\lambda_2(\xi)} \,d\xi, \quad T_2}.
\end{equation}

\end{theorem}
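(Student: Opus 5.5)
The plan is to work directly on the cascade structure of \eqref{syst G} in the case $m=p=1$, $Q=g_{11}=0$, and to reduce everything to a single convolution identity for the trace $v(t)=\tilde{y}_1(t,0)$. Since $g_{11}=0$, the first equation is a pure transport equation. Hence $v$ on $(0,T_1)$ is exactly the transported initial datum $\tilde{y}^0_1$, and as $\tilde{y}^0_1$ ranges over $L^2(0,1)$ this restriction ranges over all of $L^2(0,T_1)$. On $(T_1,T)$, $v$ is the (free) control, shifted. Once a suitable $v$ is chosen with $\tilde{y}_2(T,\cdot)=0$, I recover $\tilde{u}$ by solving the first equation backward from $\tilde{y}_1(T,\cdot)=0$ with trace $v$ at $x=0$, as in the proof of Theorem \ref{thm CN19}. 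This is consistent provided $T\geq T_1$. I introduce the travel-time coordinate $\tau(x)=\int_0^x \frac{d\xi}{\lambda_2(\xi)}$ (so $\tau(1)=T_2$) and set $\tilde{g}(r)=g_{21}(\tau^{-1}(r))$ for $r\in(0,T_2)$. Then $\Xm{T_2}{\tilde{g}}=\tau(\Xm{1}{g_{21}})$. The condition in \eqref{min time can syst} rewrites as $T\geq T_2$ and $T\geq T_1+T_2-\tau(\ell)$, with $\ell=\Xm{1}{g_{21}}$.

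The key computation is as follows. For $T\geq T_2$, integrating the second equation along the characteristics of $\lambda_2$ (the analogue of \eqref{explicit formula y+} for variable speed, with $\tilde{y}_2(t,0)=0$) and changing variables $\xi\mapsto r=\tau(\xi)$ gives
$$\tilde{y}_2(T,x)=\int_0^{\tau(x)} \tilde{g}(r)\, v\big(T-\tau(x)+r\big)\,dr .$$
With $w(s)=v(T-s)$ this is exactly $(\tilde{g}*w)(\tau(x))$. Hence $\tilde{y}_2(T,\cdot)=0$ if, and only if, $\tilde{g}*w=0$ on $(0,T_2)$.

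\emph{Sufficiency.} Assume $T\geq T_2$ and $T\geq T_1+T_2-\tau(\ell)$. I take $v=0$ on $(T_1,T)$, i.e. $w=0$ on $(0,T-T_1)$. Then $(\tilde{g}*w)(a)=\int_0^a \tilde g(r)w(a-r)\,dr$ can only be nonzero if some $r<a$ has $r\geq \tau(\ell)$ and $a-r\geq T-T_1$. This requires $a> \tau(\ell)+T-T_1\geq T_2$, so the convolution vanishes on $(0,T_2)$. Also $T\geq T_1$ holds automatically, so $\tilde{y}_1(T,\cdot)=0$ with $\tilde{u}=0$.

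\emph{Necessity of $T\geq T_2$.} Suppose $T<T_2$. Then $\tilde{y}_2(T,x)$ for $x\in(\chi_2(T;0,0),1)$ equals the transported datum $\tilde{y}^0_2$ plus an integral term depending on $v$. That integral term is a compact (Hilbert--Schmidt) operator of $v\in L^2(0,T)$. Its range therefore cannot contain all transported data in an infinite-dimensional $L^2$ space, so some $\tilde{y}^0_2$ cannot be cancelled.

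\emph{Necessity of the other bound.} This is the main step. Assume $T\geq T_2$ but $T<T_1+T_2-\tau(\ell)$, and take $\tilde{y}^0_2=0$. Null controllability forces $\tilde{g}*w=0$ on $(0,T_2)$, and here I would invoke the Titchmarsh convolution theorem. It gives $\inf\operatorname{supp}\tilde{g}+\inf\operatorname{supp}w\geq T_2$, where supports are taken within $(0,T_2)$, and by definition $\inf\operatorname{supp}\tilde g=\tau(\ell)$. Hence $w=0$ on $(0,T_2-\tau(\ell))$, i.e. $v=0$ on $(T-T_2+\tau(\ell),T)$. By assumption $T-T_2+\tau(\ell)<T_1$, so this interval meets $(0,T_1)$, where $v$ is the transported $\tilde{y}^0_1$. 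Choosing $\tilde{y}^0_1$ nonzero on the corresponding piece of $(0,1)$ yields a contradiction.

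The delicate points are the following. First, Titchmarsh must be applied on a finite interval, for $L^1$ functions on $(0,T_2)$; this is standard. Second, when $\ell=0$ one has $\inf\operatorname{supp}\tilde g=0$, and the theorem gives the sharp bound $T\geq T_1+T_2$, consistent with Example \ref{ex length}. Third, the change of variables must be justified rigorously in the $L^2$ broad-solution framework. If $\ell=1$, i.e. $g_{21}=0$, the condition reduces to $T\geq\max\ens{T_1,T_2}$, and the argument above covers that case as well.
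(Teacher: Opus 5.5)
Your proposal is correct and follows essentially the same route as the paper: $\tilde u=0$ for sufficiency, a non-surjectivity argument for $T\geq T_2$ (compactness of the integral operator, where the paper notes that its range lies in $L^\infty$), and the Titchmarsh convolution theorem applied to $g_{21}$ and $v(T-\cdot)$; your travel-time variable $\tau$ also handles non-constant speeds, which the paper sidesteps by assuming constant ones. One slip: the case $\ell=1$ is not covered by your Titchmarsh step, since $(T-T_2+\tau(\ell),T)$ is then empty, so you must separately prove that $T\geq T_1$ is necessary — this is the $T_1$ half of Lemma \ref{lem lowest bound}, and it is immediate here because $\tilde y_1$ is a pure transport.
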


It is not difficult to check that $\tilde{u}=0$ is a control for system \eqref{syst G} in the above configuration.
It is the necessary condition that is difficult to establish.
First of all, we shall prove:

\begin{lemma}\label{lem lowest bound}
If the system is \NC in time $T$, then necessarily $T \geq \max \ens{T_1,T_2}$.
\end{lemma}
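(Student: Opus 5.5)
We prove the two bounds $T \geq T_2$ and $T \geq T_1$ separately, working directly with system \eqref{syst G} with $m=p=1$, $Q=g_{11}=0$. For simplicity we take constant speeds, so that \eqref{explicit formula y-} and \eqref{explicit formula y+} are available; the general case only replaces straight lines by the characteristics $\chi_k$. In this configuration we have $\pt{\tilde{y}_1}+\lambda_1\px{\tilde{y}_1}=0$, since $g_{11}=0$. The function $v(t)=\tilde{y}_1(t,0)$ then enters only the second equation, through the source $g_{21}(x)v(t)$. Moreover, the boundary condition $\tilde{y}_2(t,0)=0$ holds because $Q=0$.

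\textbf{Bound $T \geq T_1$.} The first component is a pure transport equation controlled at $x=1$: $\tilde{y}_1(t,x)=\tilde{y}^0_1(x-\lambda_1 t)$ as long as $x-\lambda_1 t<1$, whatever the control. If $T<T_1$, the final state $\tilde{y}_1(T,\cdot)$ coincides on the nonempty interval $(0,1+\lambda_1 T)$ with a translate of $\tilde{y}^0_1$. Choosing $\tilde{y}^0_1$ nonzero there and $\tilde{y}^0_2=0$ contradicts $\tilde{y}_1(T,\cdot)=0$, whatever $\tilde{u}$ is.

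\textbf{Bound $T \geq T_2$.} We use \eqref{explicit formula y+} for $k=1$, with $Q_1=0$. For $x$ with $T-x/\lambda_2<0$, that is $x>\lambda_2 T$, which is a nonempty set of $(0,1)$ when $T<T_2$, we get
$$
\tilde{y}_2(T,x)=\tilde{y}^0_2(x-\lambda_2 T)+\int_0^T g_{21}(\lambda_2(s-T)+x)\,v(s)\,ds .
$$
The integral term depends on the control through $v$, so a direct argument is not immediate. However, $v$ on $(0,T)$ is itself determined by $\tilde{y}^0_1$ on $(0,T)$ only when $T\le T_1$. We therefore combine the two cases. By the previous step we may assume $T\geq T_1$, and we only need to rule out $T_1\leq T<T_2$.

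\textbf{Main obstacle: the integral term in the bound $T\geq T_2$.} This is the step we expect to be the main difficulty, since the integral term must be shown not to annihilate $\tilde{y}^0_2$. The integral defines a map $v\mapsto \int_0^T g_{21}(\lambda_2(s-T)+x)v(s)\,ds$ from $L^2(0,T)$ into $L^2(\lambda_2T,1)$. This map is Hilbert--Schmidt, because $g_{21}\in L^\infty$, and hence compact. Null controllability from all initial data with $\tilde{y}^0_1=0$ would force the translation operator $\tilde{y}^0_2\mapsto \tilde{y}^0_2(\cdot-\lambda_2T)|_{(\lambda_2T,1)}$ to have range contained in the range of this compact operator. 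But the translation operator is onto $L^2(\lambda_2T,1)$ with infinite-dimensional range. By the open mapping theorem, a surjective operator factoring through a compact one would make the identity of an infinite-dimensional space compact, which is a contradiction. Concretely, if $T<T_2$, the set of reachable functions $\{-\int_0^T g_{21}(\lambda_2(s-T)+\cdot)v(s)ds\}$ must contain all of $L^2(\lambda_2 T,1)$, and this is impossible for the range of a compact operator. Hence $T\geq T_2$, which together with the first bound gives $T\geq\max\{T_1,T_2\}$.
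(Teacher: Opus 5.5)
Your proof is correct. For the bound $T \geq T_2$ it is the paper's argument: on the nonempty set $\omega=(\lambda_2 T,1)$, null controllability forces the integral operator $v \mapsto -\int_0^T g_{21}(\lambda_2(s-T)+\cdot)\,v(s)\,ds$ to map $L^2(0,T)$ onto $L^2(\omega)$. The paper reaches the contradiction more elementarily, by noting that this range lies in $L^\infty(\omega)$, whereas you use Hilbert--Schmidt compactness and the open mapping theorem; your preliminary reduction to $T \geq T_1$ is never used and can be dropped.

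For $T \geq T_1$, your direct transport argument relies on $g_{11}=0$. That is legitimate in the present configuration, but the paper notes that neither $Q=0$ nor $g_{11}=0$ is needed, since the same range argument absorbs a nonzero $g_{11}$ term.
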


\begin{proof}
There is no need to assume either $Q=0$ or $g_{11}=0$ here.
We assume that the speeds are constant ($\lambda_k(x)=\lambda_k$) for simplicity.
Assume for instance that $T<T_2=1/\lambda_2$, the other case being similar.
Then, the open subset
$$
\omega=\ens{x \in (0,1) \quad \st \quad -\lambda_2 T+x \in (0,1)},
$$
is not empty.
By assumption, for any initial data $\tilde{y}^0$, there exists a controlled solution $\tilde{y}$.
For $x \in \omega$, the \NC condition $\tilde{y}_2(T,x)=0$ is equivalent to (see \eqref{explicit formula y+})
$$
0=
\tilde{y}^0_2(-\lambda_2 T+x)
+\int_0^T g_{21}(\lambda_2(s-T)+x) \tilde{y}_1(s,0) \, ds.
$$
Since $\tilde{y}^0_2 \in L^2(0,1)$ has to be arbitrary this shows that the bounded linear operator $K:L^2(0,T) \longrightarrow L^2(\omega)$ defined by
$$
(Kv)(x)
=-\int_0^T g_{21}(\lambda_2(s-T)+x) v(s) \, ds,
$$
is surjective.
This is impossible since its range is clearly a subset of $L^{\infty}(\omega)$.
\end{proof}

The rest of the proof of the theorem relies on the Titchmarsh convolution theorem (see e.g. \cite[Chapter XV]{Mik78}).

\begin{proof}[Proof of Theorem \ref{thm can syst} (necessity)]
The speeds are still assumed to be constant for simplicity.
Assume that $T$ does not satisfy \eqref{min time can syst} and let us show that the system cannot be \NC.
By Lemma \ref{lem lowest bound} we can also always assume that $T \geq T_2$.
Now, for $T \geq T_2$, we have $\tilde{y}_2(T,\cdot)=0$ if, and only if (see \eqref{explicit formula y+}),
$$
\int_{T-T_2 x}^T g_{21}(\lambda_2(s-T)+x) \tilde{y}_1(s,0) \, ds=0, \quad 0<x<1.
$$
Writing $x=\lambda_2(T-t)$, $\sigma=s-t$ and $\tau=T-t$, this is equivalent to
$$
\int_0^{\tau} \tilde{y}_1(-(\tau-\sigma)+T,0) g_{21}(\lambda_2\sigma) \, d\sigma=0,
\quad 0<\tau<T_2.
$$
This is a convolution.
Applying the Titchmarsh convolution theorem, we get
\begin{equation}\label{app Titch}
\Xm{T_2}{\tilde{y}_1(-\cdot+T,0)}+\Xm{T_2}{g_{21}(\lambda_2 \cdot)} \geq T_2.
\end{equation}
Since $g_{11}=0$, from the explicit expression of the solution \eqref{explicit formula y-}, we have
$$
\tilde{y}_1(-\theta+T,0)=
\begin{dcases}
\tilde{y}^0_1(\lambda_1(-\theta+T))
& \mbox{ if } \theta>T-T_1, \\
\tilde{u}(-\theta+T -T_1)
& \mbox{ if } \theta<T-T_1.
\end{dcases}
$$
Therefore, choosing $\tilde{y}_1^0$ so that
$$
\tilde{y}_1(-\theta+T,0) \neq 0, \quad \text{ for a.e. } \theta \in
\left(T-T_1, T-T_1+\epsilon\right),
$$
where $\epsilon>0$ is small enough, we have the upper bound
$$\Xm{T_2}{\tilde{y}_1(-\cdot+T,0)} \leq T-T_1.$$
Combing back to \eqref{app Titch}, we deduce that
$$
\Xm{T_2}{g_{21}(\lambda_2 \cdot)} \geq T_2-(T-T_1).
$$
This is clearly equivalent to $\Xm{1}{g_{21}} \geq (T_2 -(T-T_1))/T_2$, that is also $T \geq T_1 +(1-\Xm{1}{g_{21}})T_2$.

\end{proof}

Following the plan laid out in Section \ref{sect BS problems}, we now investigate Problem \ref{prob 2}.
To deduce Theorem \ref{main thm 1+1} from the previous result, it suffices to show the identity
\begin{equation}\label{from lG to lM}
T_1+\int_{\Xm{1}{g_{21}}}^1 \frac{1}{\lambda_2(\xi)} \,d\xi
=\int_{\Xm{\xmax}{m_{21}}}^1 \left(\frac{1}{\abs{\lambda_1(\xi)}}+\frac{1}{\lambda_2(\xi)}\right) \, d\xi,
\end{equation}
where $g_{21}$ is not anymore any function, but the one given by \eqref{def G}, that is $g_{21}(x,0)=-k_{21}(x,0)\lambda_1(0)$ in our special situation ($Q=0$), where $k_{21}$ is the solution to
$$
\begin{dcases}
\lambda_2(x)\px{k_{21}}(x,\xi)+\pxi{k_{21}}(x,\xi)\lambda_1(\xi)
+k_{21}(x,\xi) \lambda_1'(\xi)
+k_{22}(x,\xi)m^0_{21}(\xi)=0, \\
k_{21}(x,x)=\frac{m^0_{21}(x)}{\lambda_2(x)-\lambda_1(x)},
\end{dcases}
\quad (x,\xi) \in \Tau.
$$
Writing $k_{21}(\cdot,0)$ in function of $m_{21}^0$ with these equations, we can establish the identity
$$
\int_0^{\Xm{1}{g_{21}}} \frac{1}{\lambda_2(\xi)} \, d\xi
=
\int_0^{\Xm{\xmax}{m_{21}^0}} \left( \frac{1}{\abs{\lambda_1(\xi)}}+ \frac{1}{\lambda_2(\xi)}\right) \, d\xi,
$$
from which \eqref{from lG to lM} readily follows.
The details can be found in \cite[Section 4]{HO21-COCV}.

\begin{remark}\label{1+1 feedback}
Since $\tilde{u}=0$ is a control for the equivalent system \eqref{syst G}, plugging this into the formula \eqref{Volt transfo}, and using also \eqref{pre expo transfo}, we deduce that the control of the initial system \eqref{syst} is given in the form of a (bounded) \imp{feedback law} (i.e. of the form $u(t)=F(y(t,\cdot))$):
$$
u(t)=\int_0^1 \frac{k_{11}(1,\xi)d_1(\xi)}{d_1(1)} y_1(t,\xi)\,d\xi
+\int_0^1 \frac{k_{12}(1,\xi)d_2(\xi)}{d_1(1)} y_2(t,\xi)\,d\xi.
$$
\end{remark}

\begin{remark}
Theorem \ref{thm can syst} can easily be extended to the case $m=1$ and $p$ arbitrary (and $Q \in \R^p$ arbitrary), but no general result similar to Theorem \ref{main thm 1+1} is known in that situation, see \cite[Section 5]{HO21-COCV} for a discussion (see, however, \cite{HO25-SICON} when $M(x)=M$ is constant with $\diag M=0$).
\end{remark}

\subsection{Extremum values of the minimal control time}

Unlike \EC, the minimal time for the \NC depends, in general, on the internal coupling matrix $M$.
In \cite{HO22-JDE}, the authors found the smallest and largest values that this time can achieve with respect to $M$, when $\Lambda$ and $Q$ are fixed.

\begin{theorem}[\cite{HO22-JDE}]\label{main thm HO22-JDE}
Assume \eqref{hyp strict}.
We have
\begin{align}
\min_{M \in L^{\infty}} \, \TinfNC(\Lambda,Q,M) &= \TinfNC(\Lambda,Q,0),
\nonumber
\\
\max_{M \in L^{\infty}} \, \TinfNC(\Lambda,Q,M) &=
\max\ens{
T_{m+1}+T_{c_1}, \ldots, T_{m+\rho_0}+T_{c_{\rho_0}}, \quad
T_{m+\rho_0+1}+T_m
},
\label{RHS max Tinf}
\end{align}
where $\rho_0$ is the largest integer $i \in \ens{1,\ldots,p}$ such that $Q_1,\ldots,Q_i$ are linearly independent (and $\rho_0=0$ if $Q_1=0$).

\end{theorem}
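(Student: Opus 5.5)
The plan is to prove the two identities separately. Throughout, we work with the equivalent system \eqref{syst G} obtained by the backstepping method of Section \ref{sect BS}, so $\TinfNC(\Lambda,Q,M)=\TinfNC(\Lambda,Q,G)$ with $G$ given by \eqref{def G}. For the minimum, we need two things. First, $M=0$ is admissible, so the minimum is at most $\TinfNC(\Lambda,Q,0)$. Second, we must show $\TinfNC(\Lambda,Q,M)\geq \TinfNC(\Lambda,Q,0)=\max\ens{T_m,T_{m+1},T_Q}$ for every $M$ (Theorem \ref{main thm syst cons laws}).

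The bound $T\geq\max\ens{T_m,T_{m+1}}$ comes from an argument identical to Lemma \ref{lem lowest bound}. If $T<T_{m+1}$, null controllability forces a surjection from $L^2(0,T)^m$ onto $L^2(\omega)$. This surjection is the integral operator $v\mapsto\int G_{m+1}(\cdots)v$ plus a possible boundary trace term, and its range consists of $L^\infty$ functions, which is impossible. The analogous argument handles $T<T_m$.

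For $T\geq T_Q$ we reduce $Q$ to canonical form. The transformations of Proposition \ref{thm equiv syst} (in their variable-speed version from \cite{HO22-JDE}) should map $(\Lambda,Q,G)$ to $(\Lambda,\Qcan,\tilde G)$ with $\tilde G$ again a bounded coupling acting only through $\tilde y_-(t,0)$, possibly with a modified kernel. Fix $k$ with $(r_k,c_k)$. We then test null controllability on initial data supported in $y_{c_k}$, with the remaining components arranged so that $v_{c_k}=\tilde y_{c_k}(\cdot,0)$ is nonzero on a small interval just before $T_{c_k}$ and cannot be cancelled. This datum travels through the boundary coupling into $\tilde y_{m+r_k}$, which must be zero at time $T$. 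If $T<T_{m+r_k}+T_{c_k}$, the identity for $\tilde y_{m+r_k}(T,\cdot)$ reads (identity part) $+$ (compact integral part), and from it we must derive a contradiction. I would do this by a duality/compactness argument. The set of reachable data in question is the range of an identity-plus-compact map restricted to a subspace, while null controllability would require it to contain an infinite-dimensional space forced to vanish. This is the step I expect to be the main obstacle, because the cascade through $G$ can mix components. The fallback is to use the observability inequality and show it fails on a sequence of highly oscillating final data concentrated along the characteristic chain $c_k\to m+r_k$. The integral terms converge to zero on such data by Riemann–Lebesgue/compactness, while the boundary term does not.

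For the maximum, I would first prove the upper bound: for every $M$, the system is null controllable in any time $T$ at least the right-hand side of \eqref{RHS max Tinf}. The approach mirrors the proof of Theorem \ref{thm CN19}. We perform the $LU$ reductions so that the first $\rho_0$ rows of $Q$ are in canonical position. We then choose $v$ on the intervals $(T-T_{m+k},T)$ so that the conditions \eqref{cond vk on (T-Tmk,T-T2m)} hold, now solved as Volterra (not Fredholm) equations. The time $T_{m+\rho_0+1}+T_m$ gives enough room that every $v_j$ is free after $T_m$ on all intervals where rows $j>\rho_0$ impose constraints. Because rows beyond $\rho_0$ are combinations of earlier ones, those constraints reduce to Volterra equations in the earlier unknowns. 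Volterra equations are always uniquely solvable, which gives a result for every $M$, without genericity.

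For the lower bound, we exhibit a specific $M$, for instance one producing $\tilde G$ with nonvanishing entries $g_{m+i,j}$ near $0$ coupling into the dependent rows. For this $M$, a Titchmarsh-type argument as in Theorem \ref{thm can syst} shows that the minimal time is at least the right-hand side. It then remains to check that this $M$ can be realized through the kernel equations, via an identity analogous to \eqref{from lG to lM}.
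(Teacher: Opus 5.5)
\textbf{Main gap: the upper bound for the maximum.} You treat the values of $v=\tilde y_-(\cdot,0)$ on $(0,T_k)$ as data. By \eqref{cond vk on (0,Tk)}, however, they solve the forward equation $v_k(t)=\tilde y^0_k(-\lambda_k t)+\int_0^t G_k(\lambda_k(s-t))v(s)\,ds$. This equation involves all components of $v$, including the ones you want to determine backward from \eqref{cond vk on (T-Tmk,T-T2m)}. For $T\in[T_+,T_m+T_{m+1})$ the intervals $(0,T_m)$ and $(T-T_{m+1},T)$ overlap, so the $\int_0^t$ constraints and the $\int_t^T$ constraints are coupled. This is exactly the Fredholm system $(J+K)v=f$ in the proof of Theorem \ref{thm CN19}, and its solvability is the reason \cite{CN19} needs a generic $\gamma$. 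For a general $G_{--}$ the problem does not become Volterra. Your conclusion, null controllability at $T=T_+$ itself for every $M$, is therefore more than the paper proves, and more than holds in general: the introduction recalls that the minimal time is not always attained.

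The missing idea is the compactness--uniqueness step. The minimal null-control time of $(\Lambda,Q,G)$ does not depend on $G_{--}$ (\cite[Theorem 7.1]{HO22-JDE}, after \cite{CN21,DO18}). The paper stresses that this cannot be obtained by a transformation, and it is why one only gets every $T>T_+$. Only once $G_{--}=0$ (the paper also normalizes $G_{+-}$ to \eqref{struct G+-} by a Volterra transformation) can you follow the characteristics.

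Your treatment of the rows $j>\rho_0$ is also off. Only row $\rho_0+1$ is forced to depend on the earlier rows. In canonical form that row is zero, so its constraint is the first-kind condition $\int_t^T G_{m+\rho_0+1}(\lambda_{m+\rho_0+1}(s-t))v(s)\,ds=0$, not a uniquely solvable second-kind equation. The right move, as in step 4 of the proof of Theorem \ref{thm CN19}, is to set $v=0$ on $(T-T_{m+\rho_0+1},T)$. This is compatible with \eqref{cond vk on (0,Tk)} exactly because $T\geq T_{m+\rho_0+1}+T_m$, and it disposes of all rows $j\geq\rho_0+1$ at once.

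\textbf{The minimum and the attainment of the maximum.} For the minimum, the step you flag as the main obstacle is immediate: your identity-plus-compact observation already concludes, and this is what the paper does. In canonical form, suppose $\max_i T_i\le T<T_{m+r_k}+T_{c_k}$, and let $\tilde\omega=\ens{x\in(0,1) \st T-xT_{m+r_k}\in(0,T_{c_k})}$, which is nonempty. On $\tilde\omega$ the condition $\tilde y_{m+r_k}(T,x)=0$ reads
$\tilde y^0_{c_k}\bigl(\abs{\lambda_{c_k}}(T-xT_{m+r_k})\bigr)+(\mathcal K v)(x)=0$.
The first term is $v_{c_k}$ on $(0,T_{c_k})$ from \eqref{cond vk on (0,Tk)}, and $\mathcal K$, which collects all $G$-integrals, maps $L^2(0,T)^m$ into $L^\infty(\tilde\omega)$. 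Since $\tilde y^0_{c_k}$ is arbitrary, $\mathcal K$ would have to be onto $L^2(\tilde\omega)$, which is impossible, exactly as in Lemma \ref{lem lowest bound}. No special data, duality or oscillating sequences are needed. The mixing through $G$ is harmless, because it is entirely absorbed into $\mathcal K$.

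For attainment of the maximum, the first identity already gives $\TinfNC(\Lambda,Q,M)\geq\max_{k\le\rho_0}(T_{m+k}+T_{c_k})$ for every $M$, since $r_k=k$ for $k\le\rho_0$. So you only need one $M$ with $\TinfNC(\Lambda,Q,M)\geq T_{m+\rho_0+1}+T_m$. The paper takes the explicit matrix $m_{m+i,m}=\frac{\lambda_{m+i}-\lambda_m}{-\lambda_m}\ell^{i,\rho_0+1}$ for $i\geq\rho_0+1$, zero otherwise. It is built from $L^{-1}$ so that, after reduction to $\Qcan$, $y_m$ feeds the zero row $\rho_0+1$. The paper goes from $M$ to $G$ directly, rather than prescribing $\tilde G$ and trying to realize it through the kernel equations.
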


Above and below, $Q_k$ denotes the $k$-th row of $Q$.
We recall that $\TinfNC(\Lambda,Q,0)$ is explicitly known (Theorem \ref{main thm syst cons laws}).
We also recall that the indices $c_k$ are defined in Definition \ref{def rkck can form}.
In the statement of the theorem above, we use the convention that the undefined quantities are simply not taken into account (cases $\rho_0=0$ and $\rho_0=p$).

\begin{remark}\label{rem max Tinf for Q in setCN}
Using the order \eqref{order times}, we see that $\max_M \, \TinfNC(\Lambda,Q,M)=\TCN$ if $Q \in \setCN_{\rhoCN}$ (see Section \ref{sect CN19} for the notations).
Thus, this generalizes the result of \cite{CN21} mentioned at the end of Section \ref{sect CN19}.
\end{remark}

By investigating the equality ``$\min=\max$'', we deduce the following condition (on $\Lambda$ and $Q$) for the minimal time for the \NC of system \eqref{syst} to be independent of the internal coupling matrix:

\begin{corollary}\label{cor HO22-JDE}
Assume \eqref{hyp strict} and $\rho_0=p$ or
$$
\max\ens{T_{m+1}+T_{c_1}, \ldots, T_{m+\rho_0}+T_{c_{\rho_0}}} \geq T_{m+\rho_0+1}+T_m,
$$
(with $0<\rho_0<p$).
Then, $\TinfNC(\Lambda,Q,M)=\TinfNC(\Lambda,Q,0)$.
\end{corollary}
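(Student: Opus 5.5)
The plan is to deduce the corollary directly from Theorem \ref{main thm HO22-JDE} by squeezing. For every $M \in L^{\infty}$ we have
$$\TinfNC(\Lambda,Q,0)=\min_{M'} \TinfNC(\Lambda,Q,M') \leq \TinfNC(\Lambda,Q,M) \leq \max_{M'} \TinfNC(\Lambda,Q,M').$$
So it suffices to show that, under the stated hypotheses, the right-hand side of \eqref{RHS max Tinf} equals $\TinfNC(\Lambda,Q,0)$. The latter is given explicitly by Theorem \ref{main thm syst cons laws}: it is $\max\ens{T_m,T_{m+1},T_Q}$ with $T_Q=\max_{k\leq\rho}T_{m+r_k}+T_{c_k}$.

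The key step is to relate the indices $(r_k,c_k)$ of the canonical form to $\rho_0$.

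\textbf{Claim.} One has $r_k=k$ for $k\leq\rho_0$. If moreover $\rho_0<p$, then $r_{\rho_0+1}>\rho_0+1$, or $\rho=\rho_0$.

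To prove the claim, note that $LQU=\Qcan$ with $L$ unit lower triangular and $U$ invertible. Hence the span of the first $i$ rows of $\Qcan$ has the same dimension as the span of $Q_1,\dots,Q_i$. The first $i$ rows of a canonical matrix are independent exactly when each of them contains a nonzero entry. Thus rows $1,\dots,\rho_0$ of $\Qcan$ are nonzero and row $\rho_0+1$ is zero, which gives $r_k=k$ for $k\le\rho_0$ and the second part of the claim.

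We now compare the two times.

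\textbf{Case $\rho_0=p$.} Then $\rho=p$, $r_k=k$ for all $k$, and \eqref{RHS max Tinf} reduces to $\max_{k\le p}T_{m+k}+T_{c_k}=T_Q$. This quantity dominates both $T_{m+1}$ and $T_m$. Indeed, $T_{m+1}+T_{c_1}\ge T_{m+1}$ gives the first bound. For the second, if $m\le p$ then some $c_k=m$, giving $T_{m+k}+T_m\ge T_m$. If $m>p$, the bound $T_Q\ge T_m$ has to be checked; this is a point to verify carefully.

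\textbf{Case $0<\rho_0<p$ with the inequality.} Now \eqref{RHS max Tinf} equals $A:=\max_{k\le\rho_0}T_{m+k}+T_{c_k}$. The indices with $k>\rho_0$ satisfy $r_k\ge\rho_0+2$. By \eqref{order times} and $c_k\le m$, their terms obey $T_{m+r_k}+T_{c_k}\le T_{m+\rho_0+1}+T_m\le A$. Hence $T_Q=A$. Finally, $A\ge T_{m+1}$, and $A\ge T_{m+\rho_0+1}+T_m\ge T_m$. Therefore $\max\ens{T_m,T_{m+1},T_Q}=A$, and $\min=\max$ as required.

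\textbf{Main obstacle.} The delicate point is the first case, namely showing $T_Q\ge T_m$, or more generally that the extra $T_m$ in Theorem \ref{main thm syst cons laws} never exceeds the formula \eqref{RHS max Tinf}. The approach is to use that \eqref{RHS max Tinf} is a maximum over $M$, so it is at least the time for $M=0$. This yields the inequality $\max\geq\TinfNC(\Lambda,Q,0)$ for free. It then only remains to prove the reverse inequality $\max\le\TinfNC(\Lambda,Q,0)$, which follows from $A\le T_Q$, a consequence of $r_k=k$ for $k\le\rho_0$.
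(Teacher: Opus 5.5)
Your proof is correct and follows the paper's own argument: the paper obtains the corollary by squeezing $\TinfNC(\Lambda,Q,M)$ between the minimum and maximum of Theorem~\ref{main thm HO22-JDE}, and, as you note at the end, all that is needed is the inequality $T_+\le\TinfNC(\Lambda,Q,0)=\max\{T_m,T_{m+1},T_Q\}$, which follows from $r_k=k$ for $k\le\rho_0$ (together with the stated hypothesis when $0<\rho_0<p$). One caveat on your case $\rho_0=p$: the convention should drop only the undefined $T_{m+p+1}$, so $T_m$ stays in \eqref{RHS max Tinf} (consistent with Corollary~\ref{cor main thm EC}), because $T_Q\ge T_m$ genuinely fails when $m>p$ (e.g.\ $m=2$, $p=1$, $Q=(1\ 0)$, $T_1+T_3<T_2$); your squeeze argument is unaffected by this.
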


Note in particular that we recover the second point of Corollary \ref{cor main thm EC} (recall that \NC and \EC are equivalent if $\rank Q=p$).

\begin{remark}\label{rem cor HO22}
Because of the order \eqref{order times}, the assumption of the previous corollary is for instance satisfied if $Q_1,\ldots,Q_{\rhomax}$ are linearly independent, where $\rhomax=\min\ens{m,p}$.
\end{remark}

\subsubsection{Smallest value of the minimal control time}

\begin{proof}[Sketch of proof of Theorem \ref{main thm HO22-JDE} (first identity)]
We assume that $(\Lambda,Q,M)$ is \NC in time $T$.
\begin{enumerate}
\item
From the results of Section \ref{sect BS}, there exists $G$ such that $(\Lambda,Q,G)$ is \NC in time $T$.

\item
Adapting the proof of Proposition \ref{thm equiv syst}, we get that there exists $\tilde{G}$ such that $(\Lambda,\Qcan,\tilde{G})$ is \NC in time $T$, where $\Qcan$ denotes the canonical form of $Q$.
We refer to \cite[Proposition 4.1]{HO22-JDE} for details.

\item
Mimicking the proof of Lemma \ref{lem lowest bound} we can show that, necessarily, $T \geq \max_i T_i$.
Suppose now that $\rho \neq 0$ (otherwise we are done) and that $T$ is such that
$$
\max_i T_i \leq T<T_{m+r_{k_0}}+T_{c_{k_0}},
$$
for some $k_0$.
Then, adapting the proof of Lemma \ref{lem lowest bound} and using that $\Qcan$ is in canonical form, we see that the \NC condition $y_{m+r_{k_0}}(T,x)=0$ for $x$ in the nonempty open subset
$$
\tilde{\omega}
=\ens{x \in (0,1) \quad \st \quad T-x T_{m+r_{k_0}} \in (0,T_{c_{k_0}})},
$$
is equivalent to the surjectivity of some operator from $L^2(0,T)^m$ to $L^2(\tilde{\omega})$ but whose range is clearly a subset of $L^{\infty}(\tilde{\omega})$.
We refer to \cite[Theorem 5.1]{HO22-JDE} for details.

\end{enumerate}

\end{proof}

\subsubsection{Largest value of the minimal control time}
\label{sect largest value}

\begin{proof}[Sketch of proof of Theorem \ref{main thm HO22-JDE} (second identity)]
Let us denote the right-hand side in \eqref{RHS max Tinf} by
$$
T_+=
\max\ens{
T_{m+1}+T_{c_1}, \ldots, T_{m+\rho_0}+T_{c_{\rho_0}}, \quad
T_{m+\rho_0+1}+T_m
}.
$$
We want to show that $(\Lambda,Q,M)$ is \NC in any time $T>T_+$.
\begin{enumerate}
\item
As in the beginning of the proof of the first identity of the theorem, it is sufficient to prove that all the systems $(\Lambda,\Qcan,G)$, $G$ arbitrary, are \NC in time $T$.

\item
Using a suitable Volterra integral transformation, we can show that it is enough to prove that all the systems $(\Lambda,\Qcan,G)$ are \NC in time $T$, for $G$ with the additional property
\begin{equation}\label{struct G+-}
g_{m+i,c_k}=0, \quad \forall k, \, \forall i \geq r_k.
\end{equation}
We refer to \cite[Proposition 6.1]{HO22-JDE} for details.

\item
Using a compactness-uniqueness argument, we can prove that the minimal time for the \NC of systems of the form $(\Lambda,Q,G)$ does not depend on $G_{--}$ (recall \eqref{G decomp}).
We refer to \cite[Theorem 7.1]{HO22-JDE} for details, the proof of which is inspired from the one of \cite[Theorem 2]{CN21} and \cite[Lemma 2.6]{DO18}.

\item
After much simplifications in the coupling structure, we can follow the characteristics to show that the systems $(\Lambda,\Qcan,G)$ with $G_{--}=0$ and \eqref{struct G+-} are \NC in time $T_+$.
We refer to \cite[Theorem 8.1]{HO22-JDE} for details.

\item
Finally, we know that $T_+=\TinfNC(\Lambda,Q,0)$ if the conditions of Corollary \ref{cor HO22-JDE} are satisfied (Theorem \ref{main thm syst cons laws}).
On the other hand, we can show that $T_{m+\rho_0+1}+T_m=\TinfNC(\Lambda,Q,M)$ for the matrix $M$ defined by
$$m_{m+i,m}=\frac{\lambda_{m+i}-\lambda_m}{-\lambda_m} \ell^{i,\rho_0+1}, \quad \forall i \geq \rho_0+1,$$
and zero otherwise, where $\ell^{\alpha,\beta}$ denotes the $(\alpha,\beta)$ entry of the inverse of a matrix $L$ arising the decomposition $LQU=\Qcan$.
We refer to \cite[Section 8.2]{HO22-JDE} for details.
\end{enumerate}

\end{proof}

\begin{remark}
It is not possible to show that the minimal time for systems of the form $(\Lambda,Q,G)$ does not depend on $G_{--}$ by means of a ``transformation approach'' (e.g. backstepping).
A counterexample is detailed in \cite[Appendix A]{HO22-JDE}.
\end{remark}

\subsection{The rank increasing method}

We conclude the overview of the results about the \NC of system \eqref{syst} with the presentation of a method that was recently introduced in \cite{HO25-JDE}.
All along this section, we assume that $\Lambda$ and $M$ are regular enough on $[0,1]$ (i.e. $C^r$ with $r$ as large as need be).

First of all, we recall that, from the results of Section \ref{sect BS}, we have
$$(\Lambda,Q,M) \sim (\Lambda,Q,G),$$
where $G$ is defined by \eqref{def G}.
Since $\Lambda$ and $M$ are regular, we can find a solution $K$ to the kernel equations \eqref{kern equ} that is also regular (see e.g. \cite{HVDMK19, HO25-SICON}).
Thus, $G$ is regular as well.
On the other hand, as already mentioned in Section \ref{sect largest value}, we can prove that $\TinfNC(\Lambda,Q,G)$ does not depend on $G_{--}$ (recall \eqref{G decomp}).
Systems $(\Lambda,Q,G)$ with $G_{--}=0$ will be denoted by
$$(\Lambda,Q,G_{+-}).$$
To be consistent with the notations of the previous sections, we denote the $k$-th row of $G_{+-}$ by $G_{m+k}$.
We first describe the method to determine $\TinfNC$ for such systems and we will then explain how it can be used to deduce $\TinfNC$ for system \eqref{syst}.

\subsubsection{The method for systems of the form $(\Lambda,Q,G_{+-})$}

Let $\rhomax=\min\ens{m,p}$.
The main idea is to construct systems $(\Lambda,Q^{[k]}, G^{[k]}_{+-})$, $k=1,2,\ldots,\rhomax$, such that:
\begin{itemize}
\item
$(\Lambda,Q^{[k]}, G^{[k]}_{+-})$ and $(\Lambda,Q^{[k-1]}, G^{[k-1]}_{+-})$ have the same minimal control time (denoting by $(\Lambda,Q^{[0]}, G^{[0]}_{+-})=(\Lambda,Q, G_{+-})$).

\item
The first $k$ rows of $Q^{[k]}$ are linearly independent.
\end{itemize}
If we succeed, then it essentially follows from Remark \ref{rem cor HO22} that
$$
\TinfNC(\Lambda,Q,G_{+-})=\TinfNC(\Lambda,Q^{[\rhomax]}, G^{[\rhomax]}_{+-})=\TinfNC(\Lambda,Q^{[\rhomax]}, 0),
$$
so that, by Theorem \ref{main thm syst cons laws},
$$
\TinfNC(\Lambda,Q,M)=\max \ens{T_m, T_{Q^{[\rhomax]}}},
\quad T_{Q^{[\rhomax]}}=\max_{k \leq \rhomax} \, T_{m+k}+T_{c_k},
$$
where the indices $c_k$ refer to $Q^{[\rhomax]}$.

The construction of the systems $(\Lambda,Q^{[k]}, G^{[k]}_{+-})$ is as follows.
We assume that $(\Lambda,Q^{[k-1]}, G^{[k-1]}_{+-})$ has been constructed.

\begin{itemize}
\item
We check if the $k$-th row of $Q^{[k-1]}$ is linearly independent with its first $k-1$ rows.
If that is the case, we just set $Q^{[k]}=Q^{[k-1]}$ and $G_{+-}^{[k]}=G_{+-}^{[k-1]}$.

\item
Otherwise, this means that there exist scalars $a_1,\ldots,a_{k-1} \in \R$ such that
$$Q^{[k-1]}_k -\sum_{l<k} a_l Q^{[k-1]}_l=0,$$
and we construct a new system $(\Lambda,\hat{Q}^{[k-1]}, \hat{G}^{[k-1]}_{+-})$, with same minimal control time, in two steps as follows:

\begin{itemize}
\item
\textbf{Step 1: we put the $k$-th row to zero.}
Adapting the proof of Proposition \ref{thm equiv syst}, we can show that
$$
(\Lambda,Q^{[k-1]}, G^{[k-1]}_{+-}) \sim (\Lambda,\tilde{Q}^{[k-1]}, \tilde{G}^{[k-1]}_{+-}),
$$
where $\tilde{Q}^{[k-1]}$ and $\tilde{G}^{[k-1]}_{+-}$ have the same $i$-th rows as $Q^{[k-1]}$ and $G^{[k-1]}_{+-}$ for $i \neq k$, and
$$
\tilde{Q}^{[k-1]}_k=0, \quad \tilde{G}^{[k-1]}_{m+k}(x)=G_{m+k}(x)- \sum_{l<k} a_l G_{m+l}\left(\frac{\lambda_{m+l}}{\lambda_{m+k}} x\right),
$$
(we assume that the speeds are constant for simplicity).

\item
\textbf{Step 2: we take the derivative of the $k$-th row.}
We obtain that
$$
\TinfNC(\Lambda,\tilde{Q}^{[k-1]}, \tilde{G}^{[k-1]}_{+-})=\TinfNC(\Lambda,\hat{Q}^{[k-1]}, \hat{G}^{[k-1]}_{+-}),
$$
where $\hat{Q}^{[k-1]}$ and $\hat{G}^{[k-1]}_{+-}$ have the same $i$-th rows as $\tilde{Q}^{[k-1]}$ and $\tilde{G}^{[k-1]}_{+-}$ for $i \neq k$, and
$$
\hat{Q}^{[k-1]}_k=G_{m+k}(0), \quad \hat{G}^{[k-1]}_{m+k}(x)=\lambda_{m+k}(x) G_{m+k}'(x).
$$
We refer to \cite[Section 2.3]{HO25-JDE} for details.

\end{itemize}

We have thus produced a new $k$-th row for the boundary coupling matrix.
We then repeat the previous discussion with the new system $(\Lambda,\hat{Q}^{[k-1]}, \hat{G}^{[k-1]}_{+-})$ instead of $(\Lambda,Q^{[k-1]}, G^{[k-1]}_{+-})$.
Now comes our \imp{main assumption}: we assume that, after a finite number of iterations, we eventually end up in the first case, that is the $k$-th row of the new boundary coupling matrix is linearly independent with its $k-1$ first rows.

\end{itemize}

\subsubsection{Link with the initial system}

For the previous method to be effective for our initial system \eqref{syst} as well, we see that we need to know the derivatives of $G_{+-}$ at $0$.
Recall that $G(x)=K(x,0)P$ by definition (see \eqref{def G}).
Therefore,
$$
G^{(N)}(0)=\frac{\partial^N K}{\partial x^N}(0,0)P,
$$
for all $N \in \N$.
But the derivatives of $K$ at $(0,0)$ can be computed from the derivatives of $M^0$ at $0$ thanks to the kernel equations \eqref{kern equ}.
Indeed, let $K=(k_{\alpha\beta})_{1 \leq \alpha,\beta \leq n}$ be a solution to the kernel equations \eqref{kern equ} which is smooth enough and denote by
$$
f_{\alpha\alpha}(x)=k_{\alpha\alpha}(x,0).
$$
Then, assuming that the speeds are constant for simplicity, the kernel equation for the $(\alpha,\beta)$-th component is
\begin{equation}\label{kern equ bis}
\begin{dcases}
\lambda_{\alpha}\px{k_{\alpha\beta}}(x,\xi)
+\pxi{k_{\alpha\beta}}(x,\xi)\lambda_{\beta}
=- \sum_{\gamma=1}^n k_{\alpha \gamma}(x,\xi) m^0_{\gamma \beta}(\xi),
\\
k_{\alpha\beta}(x,x)=f_{\alpha\beta}(x) \quad (\beta \neq \alpha), \quad k_{\alpha\alpha}(x,0)=f_{\alpha\alpha}(x),
\end{dcases}
\end{equation}
where we introduced the known quantity
$$
f_{\alpha\beta}(x)=
\frac{m^0_{\alpha \beta}(x)}{\lambda_{\alpha}-\lambda_{\beta}} \quad (\beta \neq \alpha).
$$
The derivatives of $k_{\alpha1},\ldots, k_{\alpha n}$ at $(0,0)$ can then be computed successively as follows:

\begin{itemize}
\item
\textbf{$0$-th order derivatives.}
From the condition at $(x,x)$ in \eqref{kern equ bis}, we obviously have
$$k_{\alpha\beta}(0,0)=f_{\alpha\beta}(0).$$

\item
\textbf{$1$-st order derivatives.}
Taking the derivative of the condition at $(x,x)$ in \eqref{kern equ bis}, we obtain the $2 \times 2$ system
\begin{equation}\label{deriv kern 1}
J^{(1)}_{\alpha\beta}
\begin{pmatrix}
\px{k_{\alpha\beta}}(0,0) \\
\pxi{k_{\alpha\beta}}(0,0)
\end{pmatrix}
=
\begin{pmatrix}
-\sum_{\gamma=1}^n k_{\alpha\gamma}(0,0)m^0_{\gamma\beta}(0) \\
f_{\alpha\beta}'(0)
\end{pmatrix},
\end{equation}
with
$$
J^{(1)}_{\alpha\beta}=
\begin{pmatrix}
\lambda_\alpha & \lambda_\beta \\
1 & 1
\end{pmatrix}
\quad (\beta \neq \alpha),
\quad
J^{(1)}_{\alpha\alpha}=
\begin{pmatrix}
\lambda_\alpha & \lambda_\alpha \\
1 & 0
\end{pmatrix}.
$$
The right-hand side in \eqref{deriv kern 1} is a known quantity from the previous computations.
Besides, the matrix $J^{(1)}_{\alpha\beta}$ is invertible (recall \eqref{hyp strict}) and its inverse is easy to compute.
Thus, we know all the first order partial derivatives of $k_{\alpha1},\ldots, k_{\alpha n}$ at $(0,0)$.

\item
\textbf{$2$-nd order derivatives.}
Taking twice the derivative of the condition at $(x,x)$ in \eqref{kern equ bis}, one time the derivative of the first equation with respect to $x$ and one time with respect to $\xi$, we obtain a $3 \times 3$ system with similar properties than as in the previous step.
We can then go on.

\end{itemize}
The general formula can be found in \cite[Theorem 4.1]{HO25-JDE}.

Last, it is clear that the derivatives of $M^0$ can be computed from those of $M$ (see \eqref{def M0}).
Several applications of this method are given in \cite[Section 5]{HO25-JDE}.

\section{Summary and open problems}

We conclude this survey with a summary and some open problems.

\subsection{Summary}

We have reviewed the classical and current developments on boundary controllability  of 1D linear hyperbolic balance laws of the form \eqref{syst}, with a particular emphasis on the characterization of the minimal control time.
As illustrated in the previous sections, this control problem is highly nontrivial for such strongly coupled systems.
Novel phenomena have been discovered and various methods have been developed in the search for the characterization of the minimal control time.
We have added Table \ref{table summary} for a concise summary of these results (assuming \eqref{hyp strict} for simplicity).

\begin{center}
\def\arraystretch{2}
\footnotesize
\begin{tabular}{|c|c|c|}
\hline
Assumptions & Results & References
\\
\hline
$\emptyset$ (resp. $\rank Q=p$) & \NC (resp. $\EC$)  in time $T_m+T_{m+1}$ & \cite{Rus78,Li10}
\\
\hline
$M=0$ & Formula for $\TinfNC,\TinfEC$ & \cite{Wec82,HO22-JDE}
\\
\hline
$Q \in \setCN_{\rhoCN}$, $M=\gamma C$, $\gamma \not\in E$ & \NC in time $T=\TCN$ & \cite{CN19}
\\
\hline
$\emptyset$ & $\TinfEC$ is the same as for $M=0$ & \cite{HO21-JMPA}
\\
\hline
$Q \in \setCN_{\rhoCN}$ & \NC in any time $T>\TCN$ & \cite{CN21}
\\
\hline
$m=p=1$ & Formula for $\TinfEC,\TinfNC$ & \cite{CVKB13,HO21-COCV}
\\
\hline
$\emptyset$ & Formula for $\min_M \TinfNC$ and $\max_M \TinfNC$ & \cite{HO22-JDE}
\\
\hline
$m=1$, $M(x)=M$, $\diag M=0$ & Formula for $\TinfNC$ & \cite{HO25-SICON}
\\
\hline
$\Lambda,M$ smooth enough & Method to compute $\TinfNC$ & \cite{HO25-JDE}
\\
\hline
\end{tabular}
\captionof{table}{\label{table summary} Brief summary on the controllability of system \eqref{syst}.}
\end{center}

\subsection{Related open problems}

There are still some situations where the minimal time for the \NC of system \eqref{syst} remains unknown (Table \ref{table summary} does not cover all situations).
This is obviously a possible future direction of research.
Other problems related to the controllability of system \eqref{syst} that we believe to be both interesting and challenging are the following:

\begin{enumerate}
\item
\textbf{Stabilization.}
We have seen at several occasions in the survey that, sometimes, the control for the \NC can be obtained in the form of a \imp{feedback law} $u(t)=F(y(t,\cdot))$ for some suitable operator $F$ (Remarks \ref{M=0 feedback} and \ref{1+1 feedback}).
We then speak of \imp{finite-time stabilization}.
Using the backstepping method, it has also been shown that system \eqref{syst} is stabilizable in finite time $T_m+T_{m+1}$ (\cite{CVKB13,HDMVK16,ADM16,CHO17}).
Is the minimal time for stabilization always the same as the minimal time for \NC ?
For instance, if $\rank Q=p$, can we prove that the minimal time for stabilization does not depend on $M$ ?

\item
\textbf{Time-dependent systems.}
When the coupling parameters $\Lambda,Q,M$ also depend on the time $t$, there are only a few results in the literature (see e.g. \cite{Wan06,CHOS21,CN21-pre,MAK22}).
Moreover, the situation can be drastically different.
Indeed, in \cite{CN21-pre} the authors showed for instance that the control time $T_m+T_{m+1}$ cannot be improved for some (smooth) time-dependent internal coupling matrix $M=M(t,x)$, even if $Q \in \setCN_{\rhoCN}$ (compare with Theorem \ref{thm CN19}).
It is also shown therein that this ``bad'' situation does not occur for time-analytic internal coupling matrices.
It would be interesting to find the minimal control time in that framework.

\item
\textbf{More underactuated systems.}
Another very interesting question is whether we can reduce the number of controls at the boundary.
Consider, instead of the (local) boundary conditions in \eqref{syst}, the more general (and nonlocal) boundary conditions of the form
$$
\begin{pmatrix}
y_-(t,1) \\
y_+(t,0)
\end{pmatrix}
=Q
\begin{pmatrix}
y_-(t,0) \\
y_+(t,1)
\end{pmatrix}
+J u(t),
$$
where now $Q \in \R^{n \times n}$, $J \in \R^{n \times m}$ and $u:(0,T) \to \R^m$ is still the control.
In the recent work \cite{CFMS24}, the authors proved, under some assumptions, that the corresponding system is \EC in time $T=T_1+\ldots+T_n$ when $M=0$.
What is the minimal control time ? What happens when $M \neq 0$ ?

\item
\textbf{Quasilinear systems.}
As mentioned in Remark \ref{rem other app of LR}, the method of Li and Rao can also be used in the context of nonlinear systems of the form
$$
\begin{dcases}
\pt{y}(t,x)+\Lambda(y(t,x)) \px{y}(t,x)=M(y(t,x)), \\
y_-(t,1)=u(t), \quad y_+(t,0)=Q(y_-(t,0)), \\
y(0,x)=y^0(x),
\end{dcases}
\quad (t,x) \in R_T.
$$
Notably, this method gives \NC of this system, locally near $y=0$ (assuming $M(0)=Q(0)=0$), in time $T>T_m+T_{m+1}$, where $T_k=1/\abs{\lambda_k(0)}$.
Here, we have to work in the framework of semi-global $C^1$ solutions (i.e. $T$ can be large, but the initial data has to be taken small in consequence, see e.g. \cite{Li10}).
When $M=0$, it was already shown that a smaller control time is possible, under appropriate assumptions on $Q$.
First in \cite{Hu15}, where it was proved that the system is \EC locally near $y=0$ in time $T>\max\ens{T_{m+1}+T_p, T_m}$ if $m \geq p$ and there exists $\bar{Q}$ such that $y_+=Q(y_-) \Longleftrightarrow (y_1,\ldots,y_p)=\bar{Q}(y_{p+1},\ldots,y_m,y_+)$ for all $y \in \R^n$ small enough.
Then in \cite{CN20}, where the authors extended their previous linear result (see Theorem \ref{thm CN19} with $C=0$) in this context, replacing their assumption on the boundary coupling matrix by the same for its linearization at $0$, i.e. $Q'(0) \in \setCN_{\rhoCN}$.
It would be interesting to see if these assumptions on $Q$ can be weakened.
On the other hand, the case $M \neq 0$ seems completely open.

\item
\textbf{Internal controllability.}
All along this survey, we discussed the controllability of the system from the boundary.
What happens if the controls act inside the equations instead ?
More precisely, consider the equations
$$
\pt{y}(t,x)+\Lambda(x) \px{y}(t,x)=M(x) y(t,x) +1_{\omega}(x) J u(t,x), \quad (t,x) \in R_T,
$$
where the control $u:R_T \to \R^m$ is now acting inside the equations, through the matrix $J \in \R^{n \times m}$, and has to be localized in space in a given open subset $\omega \subset (0,1)$ ($1_\omega$ denotes the characteristic function of this set).
There are very few results about the internal controllability of 1D first-order hyperbolic systems.
The first ones seem \cite{ZLR16} and \cite{ABCO17}.
More recently, in \cite{LLQ24}, the authors proved that the system, with local boundary conditions, with same number of controls than state variables ($J=\Id_n$) and with $\omega=(a,b)$ an interval,  is \EC in any time $T>\max\ens{a,1-b}(T_m+T_{m+1})$, provided that the boundary coupling matrices are invertible.
This result was generalized in \cite{HO24-COCV}, where the authors found the minimal control time, considered an arbitrary open set $\omega$ and also showed the necessity of the invertibility of the boundary coupling matrices.
What happens for underactuated systems ($\rank J<n$) in general ? What happens if we consider nonlocal boundary conditions ? What happens if $\omega=\omega(t)$ depends on $t$ (``moving control'') ?

\end{enumerate}

\section*{Acknowledgments}

The authors would like to thank Jean-Michel Coron for his unfailing support and for all he has done for us.
He is an outstanding researcher and a wonderful human being.

The second author would like to thank Shandong University for its hospitality.
This work was initiated while he was visiting there.
The first author would like to thank Jagiellonian University for its hospitality.
This work was continued while he was visiting there.
This project was supported by National Key R\&D Program of China (No. 2023YFA1009200), National Natural Science Foundation of China (No. 12471421) and National Science Centre, Poland UMO-2020/39/D/ST1/01136 and UMO-2023/50/E/ST1/00081.

\renewcommand*{\bibfont}{\small}
\printbibliography


\end{document}